\documentclass[12pt]{amsart}
\usepackage{amsbsy}
\usepackage{graphicx,epsfig,subfigure,psfrag}
\textwidth 16cm       
\textheight 21cm      
\oddsidemargin 0.35cm   
\evensidemargin 0.35cm  
\topmargin 0cm       
\pagestyle{plain} \setcounter{page}{1}

\begin{document}

\newtheorem{theorem}{Theorem}
\newtheorem{proposition}{Proposition}
\newtheorem{lemma}{Lemma}
\newtheorem{corollary}{Corollary}
\newtheorem{definition}{Definition}
\newtheorem{remark}{Remark}
\newcommand{\tex}{\textstyle}
\numberwithin{equation}{section} \numberwithin{theorem}{section}
\numberwithin{proposition}{section} \numberwithin{lemma}{section}
\numberwithin{corollary}{section}
\numberwithin{definition}{section} \numberwithin{remark}{section}
\newcommand{\ren}{\mathbb{R}^N}
\newcommand{\re}{\mathbb{R}}
\newcommand{\n}{\nabla}
\newcommand{\p}{\partial}
\newcommand{\iy}{\infty}
\newcommand{\pa}{\partial}
\newcommand{\fp}{\noindent}
\newcommand{\ms}{\medskip\vskip-.1cm}
\newcommand{\mpb}{\medskip}
\newcommand{\AAA}{{\bf A}}
\newcommand{\BB}{{\bf B}}
\newcommand{\CC}{{\bf C}}
\newcommand{\DD}{{\bf D}}
\newcommand{\EE}{{\bf E}}
\newcommand{\FF}{{\bf F}}
\newcommand{\GG}{{\bf G}}
\newcommand{\oo}{{\mathbf \omega}}
\newcommand{\Am}{{\bf A}_{2m}}
\newcommand{\CCC}{{\mathbf  C}}
\newcommand{\II}{{\mathrm{Im}}\,}
\newcommand{\RR}{{\mathrm{Re}}\,}
\newcommand{\eee}{{\mathrm  e}}
\newcommand{\LL}{L^2_\rho(\ren)}
\newcommand{\LLL}{L^2_{\rho^*}(\ren)}
\renewcommand{\a}{\alpha}
\renewcommand{\b}{\beta}
\newcommand{\g}{\gamma}
\newcommand{\G}{\Gamma}
\renewcommand{\d}{\delta}
\newcommand{\D}{\Delta}
\newcommand{\e}{\varepsilon}
\newcommand{\var}{\varphi}
\newcommand{\lll}{\l}
\renewcommand{\l}{\lambda}
\renewcommand{\o}{\omega}
\renewcommand{\O}{\Omega}
\newcommand{\s}{\sigma}
\renewcommand{\t}{\tau}
\renewcommand{\th}{\theta}
\newcommand{\z}{\zeta}
\newcommand{\wx}{\widetilde x}
\newcommand{\wt}{\widetilde t}
\newcommand{\noi}{\noindent}
\newcommand{\uu}{{\bf u}}
\newcommand{\xx}{{\bf x}}
\newcommand{\yy}{{\bf y}}
\newcommand{\zz}{{\bf z}}
\newcommand{\aaa}{{\bf a}}
\newcommand{\cc}{{\bf c}}
\newcommand{\jj}{{\bf j}}
\newcommand{\ggg}{{\bf g}}
\newcommand{\UU}{{\bf U}}
\newcommand{\YY}{{\bf Y}}
\newcommand{\HH}{{\bf H}}
\newcommand{\GGG}{{\bf G}}
\newcommand{\VV}{{\bf V}}
\newcommand{\ww}{{\bf w}}
\newcommand{\vv}{{\bf v}}
\newcommand{\hh}{{\bf h}}
\newcommand{\di}{{\rm div}\,}
\newcommand{\ii}{{\rm i}\,}
\newcommand{\inA}{\quad \mbox{in} \quad \ren \times \re_+}
\newcommand{\inB}{\quad \mbox{in} \quad}
\newcommand{\inC}{\quad \mbox{in} \quad \re \times \re_+}
\newcommand{\inD}{\quad \mbox{in} \quad \re}
\newcommand{\forA}{\quad \mbox{for} \quad}
\newcommand{\whereA}{,\quad \mbox{where} \quad}
\newcommand{\asA}{\quad \mbox{as} \quad}
\newcommand{\andA}{\quad \mbox{and} \quad}
\newcommand{\withA}{,\quad \mbox{with} \quad}
\newcommand{\orA}{,\quad \mbox{or} \quad}
\newcommand{\atA}{\quad \mbox{at} \quad}
\newcommand{\onA}{\quad \mbox{on} \quad}
\newcommand{\ef}{\eqref}
\newcommand{\mc}{\mathcal}
\newcommand{\mf}{\mathfrak}

\newcommand{\ssk}{\smallskip}
\newcommand{\LongA}{\quad \Longrightarrow \quad}
\def\com#1{\fbox{\parbox{6in}{\texttt{#1}}}}
\def\N{{\mathbb N}}
\def\A{{\cal A}}
\newcommand{\de}{\,d}
\newcommand{\eps}{\varepsilon}
\newcommand{\be}{\begin{equation}}
\newcommand{\ee}{\end{equation}}
\newcommand{\spt}{{\mbox spt}}
\newcommand{\ind}{{\mbox ind}}
\newcommand{\supp}{{\mbox supp}}
\newcommand{\dip}{\displaystyle}
\newcommand{\prt}{\partial}
\renewcommand{\theequation}{\thesection.\arabic{equation}}
\renewcommand{\baselinestretch}{1.1}
\newcommand{\Dm}{(-\D)^m}

\title
{\bf Local bifurcation-branching analysis \\of global and
``blow-up" patterns \\for a fourth-order thin film equation}

\author{P.~\'Alvarez-Caudevilla and V.A.~Galaktionov}

\address{Centro di Ricerca Matematica Ennio De Giorgi,
Scuola Normale Superiore, 56100, Pisa, ITALY}
\email{alvcau.pablo@gmail.com}

\address{Department of Mathematical Sciences, University of Bath,
 Bath BA2 7AY, UK}
\email{vag@maths.bath.ac.uk}

\keywords{Thin film equation, local bifurcation analysis,
source-type and blow-up similarity solutions, the Cauchy problem,
finite interfaces, oscillatory sign-changing behaviour}

\thanks{The first author is supported by the Ministry of Science and Innovation of
Spain under the grant MTM2009-08259 and the Postdoctoral
Fellowship--2008-080.}

 \subjclass{35K55, 35K40}
\date{\today}





\begin{abstract}

 Countable families of {\em global-in-time} and {\em blow-up} similarity
sign-changing patterns
of the Cauchy problem for the fourth-order \emph{thin film
equation} (TFE--4)
 \begin{equation*}
 \tex{ u_t=-\n \cdot (|u|^n \n \D u) \inB \ren \times \re_+ \whereA n>0,}
  \end{equation*}
are studied.
 The similarity solutions are of standard ``forward" and ``backward"
 forms
  $$
   \tex{
 u_{\pm}(x,t)= (\pm t)^{-\a} f(y), \quad y=x/(\pm t)^\b, \quad \b= \frac {1-\a n}4,
  \quad \pm t > 0,
 \quad \mbox{where $f$ solve}
  }
  $$
   \be
   \label{02}
   \tex{
 \BB^\pm_n (\a,f) \equiv - \n \cdot (|f|^n \n \D f) \pm \b y \cdot \n f
 \pm
 \a f=0 \inB \ren,
 }
 \ee
 and $\a \in \re$ is a parameter (a ``nonlinear eigenvalue").
The sign ``$+$", i.e., $t>0$, corresponds to global asymptotics as
$t \to +\iy$, while ``$-$" ($t<0$) yields blow-up limits $t \to
0^-$ describing  possible  ``micro-scale" (multiple zero)
structures of solutions of the PDE.

 To get a countable set of nonlinear pairs $\{f_\g,
 \a_\g\}$, a bifurcation-branching analysis is performed by using a
 homotopy path $n \to 0^+$ in \ef{02}, where $\BB_0^\pm(\a,f)$
 become associated with a pair $\{\BB,\BB^*\}$ of linear
 non-self-adjoint
 operators
  $$
  \tex{
   \BB= - \D^2 + \frac 14\, y \cdot \n + \frac N4 \, I \andA
 \BB^* =- \D^2 - \frac 14\, y \cdot \n
 \quad (\mbox{so} \,\,\,(\BB)^*_{L^2}=\BB^*),
 }
 $$
 which are known to possess a discrete real spectrum,
 $\s(\BB)=\s(\BB^*)= \{ \l_ \g= - \frac{|\g|}4\}_{|\g| \ge 0}$
 ($\g$ is a multiindex in $\ren$).
 These operators occur after corresponding global and blow-up
 scaling of the classic {\em bi-harmonic equation} $u_t= - \D^2 u$.
 This allows us to trace out the origin of a countable family of
 $n$-branches of nonlinear eigenfunctions by using simple or
 semisimple eigenvalues of the linear operators $\{\BB,\BB^*\}$
 leading to important properties of oscillatory sign-changing
 nonlinear patterns of the TFE, at least, for small $n>0$.
\end{abstract}

\maketitle

\section{Introduction: TFE--4 and ``adjoint" nonlinear eigenvalue problems}
 \label{S1}

\subsection{The model, two classes of similarity solutions, and main problems}

\noindent In this paper, we study
 {\em global asymptotic behaviour} (as $t \to +\iy$)
 and
{\em finite-time blow-up behaviour} (as $t \to T^-<+\iy$)
of solutions of the fourth-order semilinear \emph{thin film
equation} (TFE-4)
\begin{equation}
\label{tfe}
    u_{t} = -\nabla \cdot(|u|^{n} \nabla \D u)
 \quad \mbox{in} \quad \ren \times \re_+, \quad n>0,
\end{equation}
where $\n={\rm grad}_x$ and $\D=\n \cdot \n$ stands for the
Laplace operator in $\ren$.


Before describing the main application of such a nonlinear
higher-order PDE model, which has become widely known in the last
decades, we state the main goal of the paper. We study
  similarity solutions of \ef{tfe} of  two ``forward" and Sturm's ``backward"  types:

 \ssk

  (i) {\em global
 similarity patterns} for $t \gg 1$, and

 \ssk

 (ii) {\em blow-up
 similarity ones} with the finite-time behaviour as $t \to
 T^-<\iy$.

 \ssk

 Both classes of such particular solutions of the TFE--4 \ef{tfe}
 can be written in the joint form as follows (here, the blow-up time $T=0$ for solutions in (ii)):
  \be
  \label{upm}
   \tex{
 u_{\pm}(x,t)= (\pm t)^{-\a} f(y), \quad y=x/(\pm t)^\b \,\,\, \mbox{for}
  \,\,\, \pm t > 0 \whereA \b= \frac {1-\a
 n}4,
  }
  \ee
  and  similarity profiles $f(y)$ satisfy the following {\em
  nonlinear eigenvalue problems}, resp.,
   \be
   \label{eigpm}
    \fbox{$
   \tex{
   {\bf (NEP)_\pm}: \quad
 \BB^\pm_n (\a,f) \equiv - \n \cdot (|f|^n \n \D f) \pm \b y \cdot \n f \pm
 \a f=0 \inB \ren.
 }
  $}
 \ee
 Here, $\a \in \re$ is  a parameter, which stands in both cases for admitted real
  {\em nonlinear eigenvalues}.
Thus, the sign ``$+$", i.e., $t>0$, corresponds to global
asymptotic as $t \to +\iy$, while ``$-$" ($t<0$) yields blow-up
limits $t \to T=0^-$ describing  a ``micro-scale" structure of
the PDE. In fact, the blow-up patterns are assumed to describe the
structures of ``multiple zeros" of solutions of the TFE--4. This
idea goes back to Sturm's analysis of  solutions of the 1D heat
equation performed in 1836 \cite{St}; see \cite[Ch.~1]{GalGeom}
for the whole history and applications of these fundamental
Sturm's ideas and two {\em zero set Theorems}.

 Being equipped with proper ``boundary conditions at infinity",
 namely,
  \be
  \label{bc1}
  \mbox{for global case}, \quad \BB^+_n(\a,f):
  \quad f \in C_0(\ren) \quad (f \,\,\,\mbox{is compactly
  supported}), \quad \mbox{and}
  \ee
   \be
  \label{bc2}
  \mbox{for blow-up case}, \quad \BB^-_n(\a,f):
  \quad f(y)\,\,\,\mbox{has a ``minimal growth" as $y \to \iy$},
  \ee
equations \ef{eigpm} become two {\em true} {\em nonlinear
eigenvalue problems} to study, which can be considered as a pair
of mutually ``adjoint" ones. All related aspects and notions used
above, remaining still somehow unclear, will be properly discussed
and specified.

Our goal is to show by using any means that, for small $n>0$,
eigenvalue problems
 \be
 \label{main1}
 \fbox{
 ${\bf (NEP)_\pm}$ admit  countable sets of
 solutions $\Phi^\pm(n)=\{\a_\g^\pm,f^\pm_\g\}_{|\g| \ge 0}$,}
  \ee
  where $\g$ is a multiindex in $\ren$ to numerate   the pairs.

The last question to address is whether these sets
 \be
 \label{main2}
 \mbox{
 $\Phi^\pm(n)$
of nonlinear eigenfunctions are {\em evolutionary complete}},
 \ee
i.e., describe {\em all} possible asymptotics as $t \to +\iy$ and
$t \to 0^-$ (on the corresponding compact subsets in the variable
$y$ in \ef{upm}) in the CP for the TFE--4 \ef{tfe} with bounded
compactly supported initial data.

Our main approach is the idea of
a ``homotopic deformation" of \ef{tfe}  as $n \to 0^+$ and
reducing it to the classic {\em bi-harmonic equation}
\begin{equation}
\label{s1}
    \tex{ u_{t} = -\D^2 u\quad \hbox{in} \quad \re^{N} \times \re_+\,,}
\end{equation}
 for which both problems \ef{main1} and \ef{main2} are solved
 positively by rather standard (but not self-adjoint)  spectral
 theory of linear operators.


\subsection{Main TFE applications: nonnegative and oscillatory
solutions}

It has been well known since the 1980s, when  higher-order parabolic
models began to be  studied more actively, that the TFEs--4
like \eqref{tfe} have many applications arisen particularly in modeling
the spreading of a liquid film along a surface, where $u$ stands
for the height of the film in this context of thin film theory.
Other physical related problems come from lubrication theory,
nonlinear diffusion, flame and wave propagation (the
Kuramoto--Sivashinsky equation and the extended Fisher--Kolmogorov
equation), phase transition at critical Lifshitz points and
bi-stable systems. We refer to a number of key survey and other
papers on TFE theory such as \cite{AGi04, Beck05, BertBern,
BerPugh1, BerPugh2, BK01N, Grun04}; see also
 Peletier--Troy \cite{PT} as a guide to higher-order ODEs and  \cite{EGK1, EGK3}
  for most recent
short surveys and long lists of references concerning physical
derivations of various models, key mathematical results and
applications of TFEs.
 Concerning mathematics of TFEs, one has to refer to the pioneering
 Bernis--Friedman paper \cite{BF1} and \cite{BPW, FB0, CarrT02} for
 the role of source-type similarity solutions of \ef{tfe}.
 On modern existence-uniqueness theory for
 the 1D TFE (for FBP setting), see \cite{GiacOtto08}, \cite[\S~6]{EGK2}, and references therein.

\par

It should be pointed out that most of the results cited above
are associated with {\em nonnegative solutions} of a {\em
free-boundary problem} (FBP) for the TFE--4 \ef{tfe}, while
 currently this equation
 is written  for {\em solutions of changing sign}. Moreover, as mentioned above,
   the development of general approaches
  to nonnegative solutions of the FBP began with the work of
 Bernis--Friedman \cite{BF1} in 1990 with such solutions having a most relevant physical
motivation and applications.

 The study of oscillatory solutions of changing sign for the TFE--4
is more recent; see \cite{BW06, EGK2, EGK4} and references therein.
It was shown in \cite{EGK1}--\cite{EGK4} (see also \cite{PV} as
the most recent publication) that such solutions
 can be attributed to the {\em Cauchy problem} (CP) in $\ren \times \re_+$, rather
than a FBP, posed in a bounded domain with moving free
boundaries. The study of the Cauchy problem is interesting from
both points of view in some biological applications as well
as  its clear  mathematical interest in PDE theory.
 We refer to \cite{PV}, where more details on the CP setting are
 available.

In this connection,
another pioneering paper of Bernis--McLeod
in 1991 \cite{BMc91} should be mentioned, where  existence and uniqueness of first
{\em three} oscillatory source-type solutions of the Cauchy
problem  for the {\em fourth-order porous medium equation} (PME--4)
\begin{equation}
 \label{pme4}
    u_t =-(|u|^{n} u)_{xxxx}\quad \hbox{in} \quad \re \times \re_+\,,
\end{equation}
are studied. Here, unlike \ef{tfe}, equation \ef{pme4} contains a
monotone operator in the metric of $H^{-2}(\re)$. By classic
theory of monotone operators \cite{LIO}, the CP for \ef{pme4} with
compactly supported initial data $u_0$ admits a unique weak
solution that is oscillatory close to the interfaces for all $n>0$
and evidently for $n=0$, where it becomes the {\em
 bi-harmonic equation} \ef{s1}, with an oscillatory kernel of the
fundamental solution; see below.

 For $n>0$, such classes of the
so-called ``oscillatory solutions" of TFE--4 \eqref{tfe} is
difficult to tackle rigorously, and even their ODE representatives
(in the radial geometry) exhibit several surprises in trying to
describe sign-changing features close to interfaces, \cite{EGK1}.
Indeed, the CP in $\ren \times \re_+$ shows compactly supported
blow-up patterns, which have infinitely many oscillations near the
interfaces and exhibit maximal regularity there (consult
\cite{EGK1} for further details). It turns out that, for a better
understanding of such singularity oscillatory properties of
solutions of \eqref{tfe}, it is quite fruitful to consider the
homotopic limit $n \to 0^+$, thanks to the spectral theory
developed for the pair $\{{\bf B},{\bf B}^*\}$ in \cite{EGKP} for
rescaled operators where $n=0$. Thus, here we perform a homotopic
approach, more rigorous than before, in order to obtain such
interplay between the CP for the TFE-4 \ef{tfe} and the
{bi-harmonic equation} \ef{s1}.

\subsection{Our  approach, problem setting, and layout of the paper}

Before giving a description of our  approaches, it is worth
mentioning again that  TFE theory  for \emph{free boundary
problems} (FBPs) with nonnegative solutions is well understood
nowadays (at least in 1D). The FBP setting assumes posing three
standard boundary conditions at the interface, and such a theory
has been developed in many papers since 1990. The mathematical
formalities and general setting of the CP is still not fully
developed and a number of problems remain open. In fact, the
concept of proper solutions of the CP is still partially obscure,
and moreover it seems that any classic or standard notions of
weak-mild-generalized-... solutions fail in the CP setting.

  Various ideas  associated with extensions
of smooth order-preserving semigroups are well known to be
effective for second-order nonlinear parabolic PDEs, when such a
construction is naturally supported by the maximum principle. The
 analysis of higher-order equations such as \eqref{tfe} is much
harder than the corresponding  second-order equations or those in
divergent form (cf. \ef{pme4}) because of the lack of the maximum
principle, comparison, order-preserving, monotone,  and potential
properties of the quasilinear operators involved.

It is clear that the CP for the \emph{bi-harmonic equation} \eqref{s1}
is well-posed and has a unique solution given by the convolution
 \be
 \label{b11}
 u(x,t)=b(x-\cdot,t)\, * \, u_0(\cdot),
  \ee
   where $b(x,t)$ is the fundamental solution of the operator $D_t + \Delta^2$. By the
   apparent
    connection
between \eqref{tfe} and \eqref{s1} (when $n=0$), intuitively at
least,
 this analysis provides us with a way to understand the CP for the TFE-4 by
 using  the fact that the proper
 solution of the CP for \eqref{tfe}, with the same initial data $u_0$, is that one which converges to the corresponding
unique solution of the CP for \eqref{s1}, as $n\rightarrow 0$.
Thus, we shall use the patterns occurring for $n=0$, as branching
points of nonlinear eigenfunctions, so some extra detailed
properties of this linear flow will be necessary.

\ssk

In Section \ref{S3}, we, more carefully, introduce two classes of
similarity solutions (the so-called nonlinear eigenfunctions),
while Section \ref{S4} is devoted to necessary properties of the
spectral pair $\{\BB,\BB^*\}$ of linear differential operators
that occur at $n=0$.

Our further analysis is as follows:


\subsection{Local bifurcation-branching analysis for global solutions
 (Section \ref{S4}): first operator theory discussion}
 \label{S1.4}

In the first part of this work, we perform a local
bifurcation-branching analysis with respect to the
continuation parameter $n >0$, when that parameter is  small
enough. Thus, we obtain the bifurcation of solutions of the
non-gradient equation $\eqref{eigpm}_+$ from the branch of the
corresponding eigenfunctions of a rescaled linear operator.  This
yields some  information and properties of the global in time
similarity  solutions $\ef{upm}_+$  of the TFE--4 \eqref{tfe}.

\par

The linear elliptic equation occurring at $n=0$,
\begin{equation}
\label{i4}
 \tex{
    {\bf B}F \equiv -\D_y^2 F + \frac{1}{4}\,y \cdot \nabla_y F +\frac{N}{4}\, F=0
    \quad \hbox{in} \quad \re^{N},\quad \int\limits_{\re^{N}} F(y) \, {\mathrm
    d}y=1,
    }
\end{equation}
 where $F$ is the rescaled kernel of the fundamental solution
 $b(x,t)$ in \ef{b11},
will be pivotal in the subsequent analysis. Indeed, the nonlinear
operator in \ef{eigpm},
\begin{equation}
\label{bf1}
 \tex{
    \BB_n^+(\a,f):= -\nabla \cdot (|f|^n \nabla \D f)
    +\frac{1-\a n}{4}\, y \cdot \nabla f + \a f,
    }
\end{equation}
 can be written in the following equivalent form:
\begin{equation}
\label{bf2}
     \tex{
   \BB_n^+(\a,f)\equiv   -\D^2 f + \frac{1-\a n}{4}\,
    y \cdot \nabla f + \a f+\nabla \cdot ((1-|f|^n) \nabla \D f).}
\end{equation}
Then, the solutions of $\BB_n^+(\a,f)=0$
are regarded as steady states
of the nonlinear evolution equation
\begin{equation}
\label{sf11}
 \tex{
 f_\t= \BB_n^+(\a,f) \inB \ren \times \re_+.
    }
\end{equation}
The bifurcation-branching point from our solutions (for $n=0$)
will be denoted by $(n,f)=(0,\psi_k)$, which is shown to occur for
some values of the nonlinear eigenvalue $\a$ written as $\a=\a_k$
($k=|\b|$ is characterized by a multiindex $\b$ in $\ren$),  and
$\psi_k$ representing the eigenfunctions of the operator ${\bf
B}$, whose expressions will be obtained in detail later on.

Firstly, we shall prove that no bifurcation from the branch of
trivial solutions $(n,f)=(0,0)$ occurs when the parameter $n$
approximates 0. Secondly, an infinite number of branches of
solutions is shown to emanate from the eigenfunctions of the
rescaled linear operator ${\bf B}$. Consequently, this analysis
provides us with a countable family of solutions pairs
\eqref{main1} for the nonlinear equation $\eqref{eigpm}_+$ for
small $n>0$.

According to classic bifurcation theory
\cite{CR, Deim, KZ, VainbergTr}, we denote
\begin{equation}
\label{funct}
    \tex{ \BB_n^+(\a,f)\equiv \mc{F}(n,f):= \mc{L}(\a,n)f+\mc{N}(n,f),}
\end{equation}
and assume that $n$ is the main continuation parameter. Then, in
order to have a branch of solutions emanating from the branch of
trivial solutions $(n,f)=(0,0)$ at certain values of the parameter
$n$ (bifurcation points), the nonlinearity in \ef{bf2}, denoted by
$$\mc{N}(n,f):=\nabla \cdot ((1-|f|^n) \nabla \D f),$$
must fulfill
the following conditions:
\begin{equation}
\label{NL1}
  {\bf (NL):} \quad
    \tex{ \mc{N}(n,0)=0,\quad D_f \mc{N}(n,0)=0 \quad \hbox{for\,\, all}\quad
    n\in\re_+.}
\end{equation}
In other words,
$$\mc{N}(n,f)=o(\left\|f\right\|) \quad \hbox{as} \quad f\rightarrow 0.$$
Recall that, here, $\mc{N}(n,f)$ serves as  a
perturbation of the operator $\BB_n^+$ defined as in \eqref{bf2}.
Thus, under the given assumptions (which are not that easy to pose
in a suitable functional setting, to say nothing of the proof),
the linear operator denoted by
\begin{equation}
 \label{NL1NN}
 \tex{
   \mc{L}(\a,n) :=  -\D^2  + \frac{1-\a n}{4}\,
    y \cdot \nabla + \a I,
        }
\end{equation}
defines an analytic semigroup in the space, where the solutions of
$\eqref{eigpm}_+$ are defined.

Note that, in any case, the
necessary assumptions for the nonlinearity of $\eqref{eigpm}_+$
are far from clearly specified, when $f$ is very close to zero, so
something else must be imposed. Let us note that the condition
 {\bf (NL)} in \ef{NL1}, roughly speaking,  assumes that the functions $f(y)$ are
sufficiently smooth and have ``transversal" zeros with a possible
accumulating point at a finite interface only. Otherwise, if
$f(y)$ exhibits vanishing inside the support at a sufficiently
``thick" nodal set, with many non-transversal zeros, this can
undermine the validity of \ef{NL1}, even in any weak sense.

As customary in nonlinear operator
theory, instead of the differential operators in \ef{funct}, one
has to deal with the equivalent integral equation
 \be
 \label{int1}
f=- (\mc{L}(\a,n)-a I)^{-1} (\mc{N}(n,f)+a f),
 \ee
  where $a>0$ is a parameter to be chosen so that the inverse
  operator (a resolvent value) is a compact one in a weighted space $L^2_\rho(\ren)$; see
  Section \ref{S3}. We will show therein that the
  spectrum of $\mc{L}$ is always discrete and, actually,
 \be
 \label{sp33}
  \tex{
 \s(\mc{L}(\a,n))=\big\{(1-\a n)\big(- \frac k4\big)+\a,
 \,k=0,1,2,...\big\},
 }
  \ee
 so that any choice of $a>0$ such that $a \not \in \s(\mc{L})$ is suitable in \ef{int1}.
 Therefore, in particular, the conditions \ef{NL1} are assumed to be valid in a
 weaker sense associated with the integral operator in \ef{int1}.

Let us explain  why a certain ``transversality" of zeros of
possible solutions $f(y)$ is of key importance. As we see from
\ef{bf2}, we have to use the expansion for small $n>0$
 \be
 \label{nn1}
 |f|^n-1 \equiv {\mathrm e}^{n \, \ln |f|}-1= 1+ n \,
 \ln|f|+...-1=n \, \ln|f|+...\,,
  \ee
  which is true pointwise on any set $\{|f| \ge \e_0\}$ for
  an arbitrarily small
 fixed constant $\e_0>0$. However, in a small neighbourhood of any
 {\em zero} of $f(y)$, the expansion \ef{nn1} is no longer true.
 Nevertheless, it remains true in a weak sense provided that this
 zero is sufficiently transversal in a natural sense, i.e.,
 \be
 \label{nn2}
  \tex{
  \frac{|f|^n-1}n \rightharpoonup \ln|f| \asA n \to 0^+
  }
  \ee
  in $L^\infty_{\rm loc}$, since then
 the singularity $\ln |f(y)|$ is not more than {\em logarithmic}
 and, hence, is locally integrable in \ef{int1}. Equivalently we are dealing with the limit
 $$n \ln^2 |f| \rightharpoonup 0, \quad \hbox{as} \quad n \downarrow 0^+,$$
 at least in a very weak sense, since by the expansion \eqref{nn1} we have that
 $$
   \tex{
    \frac{|f|^n-1}n - \ln |f| = \frac 12 \, n \, \ln^2 |f|+...
    \, .
    }
    $$

   Note also that actually we deal, in \ef{int1}, with an easier
  expansion
   \be
   \label{nn3}
   (|f|^n-1) \n \D f = (n \, \ln|f|+...) \n \D f,
    \ee
    so that even if $f(y)$ does not vanish transversally at a
    zero surface, the extra multiplier $\n \D f(y)$ in \ef{nn3},
    which is supposed to vanish as well, helps to improve the
    corresponding weak convergence.
Furthermore, it is seen from \ef{bf2} that, locally in space
variables, the operator in \ef{int1} (with $a=0$ for simplicity)
acts like a standard Hammerstein--Uryson compact integral operator
with a sufficiently smooth kernel:
 \be
 \label{nn4}
 f \sim (\n \D)^{-1}[(|f|^n-1) \n \D f].
  \ee

 Therefore, in order to justify our asymptotic branching analysis,
 one needs in fact to introduce such a functional setting and a class of solutions
 $$\mathcal P=\{f=f(\cdot,n): \,\,
  f \in H^4_{\rho}(\ren)\},$$
  for which:
 \be
 \label{nn5}
  \tex{
  {\mathcal P}: \quad \,\,
   (\n \D)^{-1}\big( \frac{|f|^n-1}n \,\n \D f\big)  \to (\n \D)^{-1}(\ln \, |f|
 \n \D f)
    \asA n
   \to 0^+
   }
   \ee
a.e. This is the precise statement on the regularity of possible solutions,
which is necessary to perform our asymptotic branching analysis.
 In 1D or in the radial
 geometry in $\ren$, \ef{nn5} looks rather constructive. However, in general, for
 complicated solutions with unknown types of compact supports in $\ren$,
 functional settings that can guarantee \ef{nn5}
 are not achievable still.
We mention again that, in particular, our formal analysis aims to
establish structures of difficult multiple zeros of the nonlinear
eigenfunctions $f_\g(y)$, at which \ef{nn5} can be violated, but
hopefully not in the a.e. sense.

\ssk

To study nonlinear integral operators it is necessary to construct a function space in
which the integral operator possesses favorable properties (continuity, compactness).
Indeed, one can apply the classical fixed point principles of Schauder's type to an operator
acting between suitable Banach spaces. In this situation we can assert the existence of such fixed
points establishing the continuity and boundedness of the integral operator. To do so, thanks to classical nonlinear integral operator
theory we should impose the continuity of the kernel function involved in our integral operator \eqref{nn4}.

Within the previous context, let us observe that the integral equation with a Hammerstein-Uryson operator-type \eqref{nn4} is equivalent
to the integral equation \eqref{int1}, for
which we know that the inverse operator $(\mc{L}(\a,n)-a I)^{-1}$ is compact. Indeed,
by the spectral theory described in Section \ref{S3},
we are able to deduce that the operator $\mc{L}$ is defined between
two exponential weighted spaces. Hence, it looks like to ascertain the existence of such fixed points for \eqref{int1} and,
equivalently for \eqref{nn4}, the suitable Banach spaces (that will provide us with the existence of solutions of the original
equation \eqref{bf1}) are precisely those exponential weighted spaces, together with the assumption of continuity of the kernels
involved in the equivalent integral equations \eqref{int1} and \eqref{nn4}.

In addition, we would like to mention that for the study of elliptic problems of order $2m$ by Schauder's inversion procedure the suitable
Banach spaces could be the typical pairs consisted of either H\"{o}lder spaces
$$\big(\mc{C}_\rho^{2m,\a}(\ren), \mc{C}_\rho^{0,\a}(\ren)\big),\quad \hbox{with} \quad  0<\a <1,$$
or as remarked in the previous paragraph Sobolev spaces
$$\big(W_\rho^{2m,p}(\ren), L_\rho^{p}(\ren)\big), \quad \hbox{with}\quad  1<p<\infty.$$

The particular
weights assumed for those Banach spaces should be consistent with the exponential ones obtained in section \ref{S3}. Thus, this enables us to
obtain {\em a priori} estimates for the solutions of the original nonlinear equation \eqref{bf1} and provides us with the compactness
of the integral operators involved in \eqref{int1} and \eqref{nn4}.


\subsection{Regularity convention}

    Overall, we observe that, unlike classic
existence bifurcation-branching theory \cite{Deim, KZ,
VainbergTr}, where sufficiently smooth expansions are used, the
present singular one \ef{nn1} dictates a special functional
setting in a subset ${\mathcal P}$ of functions (admissible
solutions), for which \ef{nn5} should be valid {\em a priori}.
 In particular,
     such an analysis of the integral
    equation \ef{int1} will always require some deep knowledge of
    admissible structure of proper solutions $f(y)$ near zero (nodal)
    sets (also unknown), which we are still not aware of.
    Recall that, as our main goal, the present branching
    analysis is going to give us a first understanding of such delicate
    properties via the known eigenfunctions of the linear rescaled operators to appear at $n=0^+$.

Thus,  since these necessary nodal properties of possible
solutions $f(y)$ are unknown entirely rigorously, we perform our
analysis under the following {\bf regularity convention}:
\begin{equation*}
    \begin{array}{c} \text{{\em we assume that, regardless of the strong degeneracy
    of the nonlinear elliptic operator }} \\ \text{{\em involved, the problems under
    consideration in both integral and differential forms}} \\ \text{{\em admit
    sufficiently regular expansions of solutions in small $n>0$}} \\
    \text{{\em in the functional class, for which \eqref{nn5} holds.}}
    \end{array}
\end{equation*}

\ssk

 As usual in bifurcation theory, the hypothesis for this to be
valid is formulated
 for the equivalent integral
representation of the operators, though, for simplicity, we
perform the $n$-expansion analysis in the simpler (but indeed
equivalent) differential form. Overall, currently, we honestly do
not think that our analysis can be justified more rigorously than
that: technicalities to arise can be extreme and a full prove
truly illusive. However, in Appendix A, we show that a suitable
justification of the branching is indeed achievable provided that
clear transversality of a.a. zeros of linear eigenfunctions of
$\BB$ is known. Nevertheless, the problem of the actual existence
of nonlinear eigenfunctions for small $n>0$ remains open still.

\subsection{Further branching discussion}

Now, once we have discussed the principal difficulties, which have arisen, we carry out a
local bifurcation analysis close to $n=0$.
Then, the change of stability from the branch of trivial solutions
$(n,f)=(n,0)$ should be determined by the spectrum of the
linearization $\mc{L}(\a,n)$ and the assumptions of the
nonlinearity. Therefore, a bifurcation would take place at some
values of the parameter $n$ if every neighbourhood of
$(n,f)=(n,0)$ in $\re \times \mc{C}(\bar\O)$ contains a nontrivial
solution $(n,f)$ of $\eqref{eigpm}_+$ under the assumptions
imposed for the linear and nonlinear part of the equation.
However, as will be proved later, such a bifurcation from the
branch of trivial solutions $(n,f)=(n,0)$ never happens at the
value of the parameter $n=0$. Hence, we obtain a branching
from the points $(n,f)=(0,\psi_k)$ only; these arguments are fully
consistent with more particular results obtained earlier in
\cite{PV}.

Moreover, since the bifurcation from the branch  of trivial
solutions depends on the eigenvalues of the linear operator
\eqref{NL1NN}, we believe that such a bifurcation does not exist
at all (any proof is also very difficult). Indeed, after some
rescaling of the type $y\mapsto a y$, with $a=(1-\a n)^{-1/4}$, it
turns out that the spectrum of \eqref{NL1NN} is directly related
to the spectrum of the linear operator ${\bf B}$. This suggests
that no bifurcation from the branch of trivial solutions ever
happens. Therefore, as also discussed in \cite{PV}, we conjecture
that if one wants to ascertain the global bifurcation analysis for
these similarity TFEs-4, a new, different operator theory approach
must be used.

\par

Next, let us obtain the values of the parameter $\a$ for which the bifurcation-branching
phenomena occurs.
The spectrum of the operator ${\bf B}$ in \eqref{i4}, which
appears for $n=0$ in $\ef{eigpm}_+$, is already well known and
will be explained  in detail in the next sections. This has the
form
\begin{equation*}
 \tex{
    \s({\bf B})=\big\{\l_k =: -\frac{k}{4}\,,\,k=0,1,2,...\big\}.
    }
\end{equation*}
Moreover, for $k=0$, i.e., for the first eigenvalue-eigenfunction
pair $\{\a_0(n),f_0\}$, from the conservation of mass condition,
denoting by $M(t)$ the mass of the solutions of \eqref{tfe}, we
have that (here $\O$ is the rescaled support of $f(y)$,
however, for the CP, one can put $\O=\ren$)
\begin{equation*}
    \tex{ M(t):=\int\limits_\O u(x,t) \, {\mathrm d}x=t^{-\a} \int\limits_\O f(\frac{x}{t^\b})
    \, {\mathrm d}x = t^{-\a+\b N} \int\limits_\O f(y)
    \, {\mathrm d}y. }
\end{equation*}
 This yields the exact values
\begin{equation}
 \label{al0}
    \tex{
-\a+\b N=0 \LongA
     \a_0(n)=\frac{N}{4+Nn} \andA \b_0(n)=\frac{1}{4+Nn}.}
\end{equation}
However, the construction of the first eigenfunction $f_0(y)$ is
not that straightforward even in 1D; see \cite[\S~7]{EGK2},
where its oscillatory properties cease to exist at a {\em
heteroclinic} bifurcation calculated numerically as
 $$
 n_{\rm h}=1.7587...\,.
  $$
   It is worth
mentioning that, fortunately, for all $n \in (0,1)$ (this interval
is of particular interest in what follows), both the existence and
the uniqueness of $f_0(y)$ follow from the results of
\cite{BMc91}, since, rather surprisingly, source-type similarity
profiles for \ef{tfe} and \ef{pme4} are reduced to each other with
the parameter change $n \mapsto \frac n{n+1}$; see a precise
statement in \cite[Prop.~9.1]{EGK2}.

Thus, it turns out that, when the parameter $n$ approximates zero,
we obtain according to \ef{al0}
 $$
  \tex{
 \a_0(0)=\frac{N}{4},
 }
 $$
  so the solutions of
$\eqref{eigpm}_+$ seem to approach the first eigenfunction
$\psi_0$ associated with the first eigenvalue of the operator
${\bf B}$, i.e., corresponding to $\l_0=0$. However, that
approximation for the solutions of $\eqref{eigpm}_+$ should also
be extended to the eigenfunctions $\psi_k$, for any $k \geq 1$,
when the parameter $\a$  reaches the following values:
\begin{equation}
\label{bf4}
 \tex{
     \a_k(0) := -\l_k + \frac{N}{4} \quad \hbox{for any} \quad k=1,2,\ldots,
     }
\end{equation}
where $\l_k$ are the eigenvalues of the operator ${\bf B}$,
so that
\begin{equation*}
 \tex{
    \a_0(0)= \frac{N}{4}, \; \a_1(0)= \frac{N+1}{4},\; \a_2(0)= \frac{N+2}{4},\ldots ,
    \a_k(0)= \frac{N+k}{4}\ldots \,.
     }
\end{equation*}
Then, we introduce the next expression for the parameter $\a$
\begin{equation}
\label{i52}
    \tex{ \a_k(n):= \frac{N}{4+Nn}-\l_k.}
\end{equation}
Hence, due to the necessary assumptions, the structure of the
bifurcating-branching set emanating at $(n,f)=(0,\psi_k)$ depends
on the spectral theory  for the operator \eqref{i4}. For the first
eigenvalue, since $\l_0=0$ is simple, we can ascertain accurately
the local bifurcation-branching. Then, at least for sufficiently
small $n$'s,  the bifurcation-branching is locally a $\mc{C}^1$
curve, which can be parameterized as $s \mapsto (n_0(s),f(s))$ in
$\re \times \mc{C}(\bar\O)$ with
\begin{equation*}
    \tex{ (n_0(0),f(0))=(0,\psi_0),\quad f'(0)=\Phi_0, \quad \Phi_0 \in
    Y_0},
\end{equation*}
where $':=\frac{\mathrm d}{{\mathrm d}s}$, emanating from the
eigenfunction $\psi_0$ at the $n=0$ in the direction of the space
$Y_0$ orthogonal to the eigenspace $\ker \,{\bf B}$, with
$\psi_0=F$ (the rescaled fundamental kernel of $b(x,t)$) being the
eigenfunction associated with the eigenvalue $\l_0=0$.
\par

However, in the case when the multiplicity of the eigenvalues
$\l_k$ with $k\geq 1$ is higher (bigger than 1), we obtain that
the continua emanating at $(n,f)=(0,\psi_k)$ are tangent to the
manifolds $Y_k$, orthogonal to $\ker\big({\bf B} + \frac{k}{4}\,
I\big)$.  And, hence, we might have more than one direction of
bifurcation-branching, depending on certain values related to the
eigenfunctions which generate the eigenspace. This certainly
agrees with the work of Rabinowitz \cite{R}, in which for
potential operators and bifurcation from the branch of trivial
solutions, one of the next alternatives for the bifurcation
structure must be obtained:

 (i) for the value of the parameter
where the bifurcation takes place, the trivial solution is not
isolated; or

(ii) for any other value of the parameter in one-sided
neighbourhood of the bifurcation point, there are at least two
nontrivial solutions; or

(iii) for any other value of the parameter in a neighbourhood of
the bifurcation point, at least one nontrivial solution exists.

 In general, the question
about how many precise branches bifurcates for any $k \geq 1$
remains an open problem, though we think it is very related to the
dimensions of the eigenspaces. As far as we know, only partial and
very specific results have been obtained for non-variational
problems with higher multiplicities.



 \subsection{Blow-up patterns via branching theory (Section \ref{S5})}

This is a natural counterpart of the global similarity analysis of
PDEs in the limits as $t \to \iy$. We next consider {\em blow-up
limits} as $t \to T^-<\iy$, or  $t \to 0^-$ as in $\ef{upm}_-$,
where $T=0$. We thus perform a detailed and systematic  analysis
of the {\em blow-up} similarity  solutions. This is done again by
using the {homotopic approach} as $n \to 0^+$ via branching
theory, but this time based on the Lyapunov--Schmidt methods in order to obtain
relevant results and properties for the solutions of the
self-similar equation $\ef{eigpm}_-$.
This homotopic-like approach is based upon the spectral properties
of the adjoint (to the $\BB$ above) operator
 \be
 \label{ad55}
  \tex{
  \BB^*=- \D^2 - \frac 14 \, y \cdot \n
  \withA \s(\BB^*)=\big\{\l_\b=- \frac{|\b|}4,\,\,
  |\b|=0,1,2,...\big\},
  }
  \ee
  which occurs after blow-up scaling of
  the linear counterpart \eqref{s1} of
the TFE-4 \eqref{tfe} for $n=0$.
 Note that \ef{ad55} admits a complete and closed set of
 eigenfunctions being {\em generalized Hermite polynomials}, which
 exhibit finite oscillatory properties.

It is curious that, in \cite{EGK1}, blow-up similarity analysis of
the related {\em unstable} TFE--4
 did not detect any stable
oscillatory behaviour
 of solutions  near the interfaces of the radially symmetric
associated equation. All the blow-up patterns turned out to be
nonnegative, which is a specific feature of the PDE under
consideration therein.
 This does not mean that blow-up similarity solutions of the CP do not change sign near the interfaces
 or inside the support.
  Actually, it was  pointed out
   that local sign-preserving property could be attributed only to the blow-up ODE and not to
   the whole PDE \eqref{tfe}. Hence, the possibility of
having oscillatory solutions cannot be ruled out for every case. Indeed,
 thanks to the polynomial expressions of the eigenfunctions for the operator
  ${\bf B}^*$, we have, in particular, that  the first eigenfunction
   $\psi^*(y)=1$ is not oscillatory, but for some other
   eigenfunctions we shall expect sign-changing behaviour.

Then, this homotopy study exhibits a typical difficulty concerning
the desired structure of the transversal zeros of solutions, at
least for small $n>0$. Proving such a transversality zero property
is still a difficult open problem, though qualitatively, this was rather
well understood in 1D and radial geometry, \cite{EGK1}.




\subsection{TFE: FBP and CP problem settings}

We recall that, for  both the FBP and the CP of \eqref{tfe}, the
solutions are assumed to satisfy standard free-boundary
conditions:
\begin{equation}
\label{i5}
    \left\{\begin{array}{ll}  \tex{ u=0,} & \tex{ \hbox{zero-height,} }  \\
    \tex{ \nabla u=0,} & \tex{ \hbox{zero contact angle,} } \\
    \tex{ -{\bf n} \cdot \nabla (\left|u\right|^{n}  \D u)=0,}  &
    \tex{ \hbox{conservation of mass (zero-flux)} } \end{array} \right.
\end{equation}
at the singularity surface (interface) $\Gamma_0[u]\equiv \p\O$, which is the lateral boundary
of
\begin{equation*}
    \tex{ \hbox{supp} \;u \subset \re^{N} \times \re_+,\quad N \geq 1\,,}
\end{equation*}
where ${\bf n}$ stands for the unit outward normal to
$\Gamma_0[u]$, which is assumed to be sufficiently smooth (the treatment of such hypotheses is
not any goal of this paper).
For  smooth interfaces, the condition on the flux can be read as
\begin{equation*}
    \lim_{\hbox{dist}(x,\Gamma_0[u])\downarrow 0}
    -{\bf n} \cdot \nabla (|u|^{n}  \D u)=0.
\end{equation*}

For the FBP, dealing with {\em nonnegative} solutions, this
setting is assumed to define a unique solution. However, this
uniqueness result is known in 1D only; see \cite{GiacOtto08},
where the interface equation was included into the problem
setting. We also refer to \cite[\S~6.2]{EGK2}, where a ``local"
uniqueness is explained via {\em von Mises} transformation, which
fixes the interface point. For more difficult, non-radial
geometries in $\ren$, there is no hope of getting any uniqueness for
the FBP, in view of possible very complicated shapes of supports
leading to various ``self-focusing" singularities of interfaces at
some points, which can dramatically change the required regularity
of solutions.

\ssk

For the CP, the assumption on nonnegativity is got rid of, and
solutions become oscillatory close to interfaces. It is then key
that
  the solutions are expected to be
``smoother" at the interface than those for the FBP, i.e., \ef{i5}
are not sufficient to define their regularity. These {\em maximal
regularity} issues for the CP, leading to oscillatory solutions,
are under scrutiny in \cite{EGK2}; see also \cite{PV}, as the most
recent source of such a study.

\ssk

Next, denote by
\begin{equation*}
 \tex{
    M(t):=\int  u(x,t) \, {\mathrm d}x
    }
\end{equation*}
the mass of the solution, where integration is performed over
smooth support ($\ren$ is allowed for the CP only). Then,
differentiating $M(t)$ with respect to $t$ and applying the
divergence theorem (under natural regularity assumptions on
solutions and free boundary), we have that
\begin{equation*}
 \tex{
  J(t):=  \frac{{\mathrm d}M}{{\mathrm d}t}= -
  \int\limits_{\Gamma_0\cap\{t\}}{\bf n} \cdot \nabla
     (|u|^{n}  \D u )\, .
     }
\end{equation*}
The mass is conserved if $ J(t) \equiv 0$,
which is assured by the flux condition in \eqref{i5}.

The problem is completed
with bounded, smooth, integrable, compactly supported initial data
\begin{equation}
\label{i6}
    \tex{ u(x,0)=u_0(x) \quad \hbox{in} \quad \Gamma_0[u] \cap \{t=0\}.}
\end{equation}

\ssk

In the CP for \eqref{tfe} in $\ren \times \re_+$, one needs to
pose bounded compactly supported initial data \eqref{i6}
prescribed in $\ren$. Then,  under the same zero flux condition at
finite interfaces (to be established separately), the mass is
preserved; however smoother regularity properties of solutions
require a separate study/understanding; see \cite{EGK2} for some
results.

\setcounter{equation}{0}
\section{Self-similar solutions:  two nonlinear eigenvalue problems}
 \label{S2}

\subsection{Global similarity solutions}

 We now more carefully derive the problem for global  self-similar
solutions of  \eqref{tfe}, which  occur due to its natural
scaling-invariant  nature.
Namely, using the following scaling in \eqref{tfe}:
\begin{equation}
 \label{sf1}
  \begin{matrix}
    x:= \mu \bar x,\quad t:= \l \bar t,\quad u:= \nu \bar u, \quad
    \mbox{with} \ssk\ssk\\
 \tex{
    \frac{\p u}{\p t}= \frac{\nu}{\l} \frac{\p \bar u}{\p \bar t},\quad
    \frac{\p u}{\p x_{i}}= \frac{\nu}{\mu} \frac{\p \bar u}{\p \bar x_{i}},\quad
    \frac{\p^2 u}{\p x_{i}^2}= \frac{\nu}{\mu^2} \frac{\p^2 \bar u}{\p
    \bar x_{i}^2},
     }
      \end{matrix}
\end{equation}
 and substituting those expressions in \eqref{tfe} yields
\begin{equation*}
 \tex{
    \frac{\nu}{\l} \frac{\p \bar u}{\p \bar t}=-
    \frac{\nu^{n+1}}{\mu^4} \nabla \cdot (|\bar u|^{n} \nabla \D \bar u)\,.
    }
\end{equation*}
To keep this equation invariant, the following equalities must be
fulfilled:
\begin{equation}
\label{sf2}
 \begin{matrix}
 \tex{
    \frac{\nu}{\l}=\frac{\nu^{n+1}}{\mu^4} \LongA
    }
    \mu := \l^\b \Longrightarrow \nu := \l^{\frac{4\b-1}{n}}, \quad
    \mbox{so that}
     \ssk\ssk\\
 \tex{
    u(x,t):= \l^{\frac{4\b-1}{n}} \bar u(\bar x,\bar t) = \l^{\frac{4\b-1}{n}}
    \bar u(\frac{x}{\mu}) \whereA t=\l.
    }
     \end{matrix}
\end{equation}

Consequently, we have to  rescale in the following way:
\begin{equation}
\label{sf10}
 \tex{
    u_+(x,t)=t^{-\a} v(y,\tau), \quad y:=\frac{x}{t^{\b}},
    \quad \tau= \ln t \,:\, \re_+ \rightarrow \re
    \whereA \,\,\, \b= \frac {1-\a n}4,
 }
\end{equation}
such that $f(y,\tau)=\bar u(\frac{x}{t^\b},\tau)$, we obtain, after substituting
\eqref{sf10} into \eqref{tfe} and rearranging terms, that $f$  solves a quasilinear
 evolution equation given by
\begin{equation}
\label{evol}
 \tex{
 v_\t=
   \BB^+_n(\a,v) \equiv  -\nabla \cdot(\left|v\right|^{n} \nabla \D v)
    +\frac{1-\a n}{4}\, y  \cdot \nabla v +\a v \inB \ren \times \re_+\,.
    }
\end{equation}

 Consider the steady-states of the parabolic equation \eqref{evol}. Thus,
we analyze the local bifurcation-branching behaviour of the {\em nonlinear eigenvalue problem}:
\begin{equation}
\label{sf5}
 \fbox{$
 \tex{
  \BB^+_n(\a,f) \equiv
    -\nabla \cdot(|f|^{n} \nabla \D f) +\frac{1-\a n}{4}\, y \cdot \nabla f +\a
    f=0,
    \quad f \in C_0(\ren)\,.
    }
    $}
\end{equation}
 Here, the ``boundary conditions at infinity" stated as $f \in C_0(\ren)$
 are naturally associated with the known properties of {\em
 finite propagation} for TFEs, which have been mathematically
 justified about two decades ago at
 least; see a survey on energy methods in PDE theory in \cite{GS1S-V}.  Then,
  any assumption stating that $f(y)$ is
 ``sufficiently small" at infinity, e.g.,
  \be
  \label{H41}
  f \in H^4(\ren) \quad \mbox{or} \quad H^4_\rho(\ren)
   \ee
  (the last space is a domain of the linear operator $\BB$ in \ef{i4}; see the next section),
 would lead to compactly supported solutions. In fact, such a
 conclusion entirely depends on asymptotic (i.e., local, not any global) properties
of the nonlinear elliptic operators involved, so will not be a
main concern in our study.

\subsection{Blow-up similarity solutions}

The blow-up similarity solutions of the TFE--4 \ef{tfe} correspond
to completely different limits and then describe a ``micro-scale"
structure of its solutions at any given point. For convenience, we
reduce the blow-up time to $T=0$. Then, similar to the global
solutions, replacing $t \mapsto (-t)$, we obtain the patterns
\begin{equation}
 \label{u-}
 \tex{
    u_-(x,t):= (-t)^{-\a} f(y), \quad y=\frac{x}{(-t)^\b}, \quad \hbox{with the same parameter}
    \quad \b= \frac {1- \a n}4.
     }
\end{equation}
Hence, substituting that expression into  \eqref{tfe} and
rearranging terms, we arrive at the following quasilinear elliptic
equation:
\begin{equation}
\label{sf4}
 \fbox{$
 \BB_n^-(\a,f) \equiv
    -\nabla \cdot(|f|^{n} \nabla \D f)-\b y \cdot \nabla f-\a f=0
     \inB \ren.
    $}
\end{equation}
In order to get the corresponding second ``adjoint" nonlinear
eigenvalue problem, one needs to specify a ``minimal growth" of
admissible nonlinear eigenfunctions $f(y)$ as $y \to \iy$. This
will be done in Section \ref{S5}. Note that, by obvious and
straightforward reasons, \ef{sf4} does not admit compactly
supported solutions. Indeed, the nature of blow-up scaling \ef{u-}
would then mean disappearance of a such a solution in finite time,
contradicting uniqueness and other easy asymptotic issues for the
TFE--4.

In general, for solutions with finite blow-up time $T \in \re$,
the full self-similar scaling
\begin{equation}
\label{bupT}
 \tex{
    u(x,t)=(T-t)^{-\a} w(y,\tau), \quad y:=\frac{x}{(T-t)^{\b}}, \quad \tau= -\ln (T-t) \,:\,
    (-\iy, T)
     \rightarrow \re,
     }
\end{equation}
yields the parabolic equation
\begin{equation}
\label{BlowT}
 \tex{
 w_\t=
    \BB^-_n(\a,w) \equiv  -\nabla \cdot(\left|w\right|^{n} \nabla \D w)
    -\frac{1-\a n}{4}\, y  \cdot \nabla w - \a w \inB \ren \times
    \re_+.
    }
\end{equation}

\setcounter{equation}{0}
\section{Spectral properties of the linear operator ${\bf{B}}$ }
\label{S3}

\noindent In this section, we describe the spectrum $\s(\bf{B})$
of the linear operator $\bf{B}$ obtained from the rescaling of the
\emph{bi-harmonic equation} \eqref{s1}. This spectral theory will
be essentially used in ascertaining the direction of the branches
bifurcating from the trivial (actually nonexistence) and other
eigenfunctions and the number of branches for the blow-up
solutions.

\subsection{Relation to a linear eigenvalue problem}

\noindent Let $u(x,t)$ be the unique solution of the CP for the linear
parabolic bi-harmonic equation \eqref{s1} with the initial data
\begin{equation}
 \label{NN1}
    \tex{ u_0 \in L_{\rho}^2(\re^{N}), \quad \mbox{where}
    \quad \rho(y)={\mathrm e}^{a |y|^{4/3}}, \quad a>0 \,\,\,\mbox{small},}
\end{equation}
given by the convolution Poisson-type integral
\begin{equation}
\label{s2}
 \tex{
    u(x,t)=b(x,t)\, * \, u_0 \equiv t^{-\frac N4}
     \int\limits_{\re^{N}} F((x-z)t^{-\frac 14}) u_0(z)\, {\mathrm d}z.
    }
\end{equation}
Here, by scaling invariance of the problem, the  unique
fundamental solution of the operator $\frac{\p}{\p t} + \D^2$ has
the self-similar structure
\begin{equation}
\label{s3}
 \tex{
    b(x,t)=t^{-\frac N4} F(y), \quad y:=\frac{x}{t^{1/4}} \quad  (x\in \ren).
    }
\end{equation}
\par
Substituting $b(x,t)$ into \eqref{s1}, we obtain that the rescaled
fundamental kernel $F$ in \ef{s3} solves the linear elliptic
problem \ef{i4}.
${\bf B}$ is a non-symmetric linear operator, which is bounded
from $H_{\rho}^4(\re^{N})$ to $L_{\rho}^2(\re^{N})$ with the
exponential weight given in \ef{NN1}.
Here, more precisely,  $a\in (0,2d)$ is any positive constant,
depending on the parameter $d>0$ characterizing the exponential
decay of the rescaled kernel:
 \begin{equation}
  \label{F11}
        \tex{ |F(y)| \le D {\mathrm e}^{-d |y|^{4/3}} \quad \mbox{in} \quad
        \ren \quad (D>0),}
   \end{equation}
 Later on, by  $F$ we denote the oscillatory rescaled kernel as
the only solution of \eqref{i4}, which has exponential decay,
oscillates as $|y|\rightarrow \infty$, and satisfies the  standard
pointwise estimate \ef{F11}.

\ssk

Thus, we need to solve the corresponding {\em linear eigenvalue
problem}:
 \be
 \label{LP1}
  \fbox{$
  \BB \psi = \l \psi \inB \ren, \quad \psi \in L^2_\rho(\ren).
   $}
   \ee
   It seems  clear that the nonlinear
   problem \eqref{tfe} formally reduces to \eqref{LP1} at $n=0$ with
   the following shifting of the corresponding eigenvalues:
     \be
     \label{a221}
    \tex{
    \l=-\a(0) + \frac N4.
    }
    \ee
 It is another reason to call  \eqref{sf5} a {\em nonlinear eigenvalue
problem}, since for $n=0$ it reduces to the classic eigenvalue one
 for a linear differential operator. Moreover, crucially, the
discreteness of the real spectrum of the linear problem
\eqref{LP1} can be apparently  inherited by the nonlinear one, but
a complete justification of this issue is far from being clear.

\subsection{Functional setting and semigroup expansion}

\noindent Thus, we solve \eqref{LP1} and calculate the spectrum of
$\s({\bf B})$ in the weighted space $L_{\rho}^2(\re^{N})$. We then
need the following Hilbert space:
\begin{equation*}
    \tex{ H_{\rho}^4(\re^{N}) \subset L_{\rho}^2(\re^{N}) \subset L^2(\re^{N}).}
\end{equation*}
The Hilbert space $H_{\rho}^4(\re^{N})$ has the following inner
product:
\begin{equation*}
 \tex{
    \left\langle v,w \right\rangle_{\rho} := \int\limits_{\ren} \rho(y)
    \sum\limits_{k=0}^{4}
     D^{k} v(y) {\bar D^{k} w(y)}
     \, {\mathrm d}y,
      }
\end{equation*}
where $D^{k} v$ stands for the vector
$\{D^{\b} v\,,\,|\b|=k\}$,
and the norm
\begin{equation*}
 \tex{
    \left\| v \right\|_{\rho}^2 := \int\limits_{\ren} \rho(y) \sum\limits_{k=0}^{4}
     |D^{k} v(y)|^2 \, {\mathrm d}y.
    }
\end{equation*}
\par
 Next, introducing the rescaled variables
\begin{equation}
\label{s6}
 \tex{
    u(x,t)=t^{- \frac N 4} w(y,\tau), \quad y:=\frac{x}{t^{1/4}},
    \quad \tau= \ln t \,:\, \re_+ \rightarrow \re,
  }
\end{equation}
we find that the rescaled solution $w$ satisfies the evolution equation
\begin{equation}
\label{s7}
    \tex{ w_{\tau} = {\bf B}w\,,}
\end{equation}
since, substituting the representation of $u(x,t)$ \eqref{s6} into
\eqref{s1} yields
\begin{equation*}
 \tex{
    -\D_y^2 w + \frac{1}{4} \, y \cdot \nabla_y w  +\frac{N}{4} \,w(y,\tau)= t \frac{\p w}{\p t} \frac{\p \tau}{\p t}.
  }
\end{equation*}
Thus, to keep this invariant it must be satisfied that
 $
 \tex{
    t  \frac{\p \tau}{\p t}=1 \Longrightarrow \tau = \ln t.
    }
 $
Hence, $w(y,\tau)$ is the solution of the Cauchy problem for the
equation \eqref{s7} and with the following initial condition at
$\tau=0$, i.e., at $t=1$:
\begin{equation}
\label{s8}
    \tex{ w_0(y) = u(y,1)\equiv b(1)\, * \, u_0 = F\, * \, u_0\, .}
\end{equation}
Thus, the linear operator $\frac{\p}{\p \tau} - {\bf B}$ is  a
rescaled version of the standard parabolic one $\frac{\p}{\p t} +
\D^2$. Therefore, the corresponding semigroup ${\mathrm e}^{{\bf
B} \tau}$ admits an explicit integral representation. This helps
to establish some properties of the operator ${\bf B}$ and
describes other evolution features of the linear flow. From
\eqref{s2}, we find the following explicit representation of the
semigroup:
\begin{equation}
\label{s9}
 \tex{
    w(y,\tau)=\int\limits_{\re^{N}} F \big(y-z{\mathrm e}^{-\frac{\tau}{4}}\big)\, u_0(z) \,
    {\mathrm d}z \equiv {\mathrm e}^{{\bf B} \tau} w_0, \quad \mbox{where}
     \quad
    x=t^\frac{1}{4}y,  \quad \tau=\ln t.
    }
\end{equation}
Subsequently, consider Taylor's power series of the analytic kernel
\begin{equation}
\label{s10}
 \tex{
    F\big(y-z {\mathrm e}^{-\frac{\tau}{4}}\big)=\sum_{(\b)} {\mathrm e}^{
    -\frac{|\b|\tau}{4}} \frac{(-1)^{|\b|}}{\b!}    D^\b F(y) z^\b
    \equiv \sum_{(\b)} {\mathrm e}^{-\frac{|\b|\tau}{4}} \frac{1}{\sqrt{\b!}} \psi_\b(y) z^\b,
    }
\end{equation}
for any $y\in \re^{N}$, where
\begin{equation*}
    \tex{ z^{\b}:=z_1^{\b_1}\cdots z_{N}^{\b_{N}},}
\end{equation*}
and $\psi_{\b}$ are the normalized eigenfunctions of the operator
$\bf{B}$. The series in \ef{s10} converges uniformly on compact
subsets in $z \in \re^{N}$. Indeed, estimating coefficients for
$|\b|=l$,
\begin{equation*}
 \tex{
    \left|\sum_{\b=l} \frac{(-1)^{l}}{\b!}  D^\b F(y) z_1^{\b_1}\cdots z_{N}^{\b_{N}}\right| \leq b_l
    |z|^l,
    }
\end{equation*}
by Stirling's formula we have that, for $l\gg 1$,
\begin{equation}
\label{s11}
 \tex{
    b_l = \frac{N^l}{l!} \sup_{y\in \re^{N},
    |\b|=l} |D^\b F(y)| \approx \frac{N^l}{l!} l^{-l/4}
    {\mathrm e}^{l/4} \approx l^{-3l/4}c^l ={\mathrm e}^{-l \ln 3l/4 +l \ln c}.
    }
\end{equation}
Note that the series
$
    \tex{ \sum b_l |z|^l}
$
 has its radius of convergence $R=\infty$.

  Thus,
we obtain the following representation of the solution:
\begin{equation}
\label{s12}
 \tex{
    w(y,\tau)= \sum_{(\b)}  {\mathrm e}^{ \l_\b {\tau}} M_\b(u_0) \psi_\b(y),
    \quad \mbox{where} \quad
    \l_\b =: -\frac{|\b|}{4}
    }
\end{equation}
 and $\{\psi_\b\}$ are the eigenvalues and
eigenfunctions of the operator ${\bf B}$, respectively, and
\begin{equation*}
 \tex{
    M_\b(u_0):= \frac{1}{\sqrt{\b!}}
    \int\limits_{\re^{N}} z_1^{\b_1}\cdots z_{N}^{\b_{N}} u_0(z) \, {\mathrm d}z
 }
\end{equation*}
are the corresponding moments of the initial datum $w_0$ defined
by \eqref{s8}.

\subsection{Main spectral properties of the pair $\{\BB,\,
\BB^*\}$}

Thus, the next results hold \cite{EGKP}:

\begin{theorem}
\label{Th s1} {\rm (i)} The spectrum of ${\bf B}$ comprises real
eigenvalues only with the form,
\begin{equation}
\label{s13}
 \tex{
    \s({\bf B}):=\big\{\l_\b =: -\frac{|\b|}{4}\,,\,|\b|=0,1,2,...\big\}.
    }
\end{equation}
Eigenvalues $\l_\b$ have finite multiplicity with eigenfunctions,
\begin{equation}
\label{s14}
 \tex{
    \psi_\b(y):= \frac{(-1)^{|\b|}}
    {\sqrt{\b!}} D^\b F(y) \equiv \frac{(-1)^{|\b|}}{\sqrt{\b!}}
    \left(\frac{\p}{\p y_1}\right)^{\b_1}\cdots \left(\frac{\p}{\p y_N}\right)^{\b_N} F(y).
    }
\end{equation}

\noi{\rm (ii)} The subset of eigenfunctions
  $  \Phi=\{\psi_\b\}$
is complete in $L^2(\re^{N})$ and in $L_{\rho}^2(\re^{N})$.

\noi{\rm (iii)} For any $\l \notin \s({\bf B})$, the resolvent
  $  ({\bf B}-\l I)^{-1}$
is a compact operator in $L_{\rho}^2(\re^{N})$.
\end{theorem}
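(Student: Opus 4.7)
My plan is to exploit the explicit semigroup representation \eqref{s9} for $e^{\mathbf{B}\tau}$ and the convergent Taylor expansion \eqref{s10}, which effectively encodes all three conclusions of the theorem simultaneously. First I would verify (ii), the identification of eigenfunctions. Applying $D^\beta$ to the identity $\mathbf{B} F \equiv 0$ from \eqref{i4} and using the commutator
\[
[D^\beta,\,\tfrac14 y\cdot\nabla] = \tfrac{|\beta|}{4}\,D^\beta,
\]
together with the fact that $D^\beta$ commutes with $-\Delta^2$ and with the constant $\frac{N}{4}I$, yields at once
\[
\mathbf{B}(D^\beta F) = -\tfrac{|\beta|}{4}\,D^\beta F,
\]
so $\psi_\beta = \frac{(-1)^{|\beta|}}{\sqrt{\beta!}}D^\beta F$ is an eigenfunction with eigenvalue $\lambda_\beta = -|\beta|/4$. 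The exponential estimate \eqref{F11} then ensures $\psi_\beta \in L^2_\rho(\ren)$ for the weight \eqref{NN1}, provided $a$ is chosen in $(0,2d)$.

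Next, for the completeness statement (ii) of the theorem, I would argue directly from \eqref{s12}. Given any $w_0\in L^2_\rho(\ren)$, the convergence of the series \eqref{s10} in $L^2_\rho$ on compact subsets in $z$ (verified via the Stirling estimate \eqref{s11}, which gives radius of convergence $R=\infty$) combined with the exponential weight to control integration in $z$ against $w_0$, shows that
\[
w(\cdot,\tau) = e^{\mathbf{B}\tau} w_0 = \sum_{(\beta)} e^{\lambda_\beta \tau} M_\beta(w_0)\,\psi_\beta(\cdot)
\]
is a legitimate expansion. Setting $\tau = 0$ identifies $w_0$ with its generalized Fourier series in $\{\psi_\beta\}$; if $w_0$ were orthogonal in $L^2_\rho$ to every $\psi_\beta$, all moments $M_\beta(w_0)$ would vanish, forcing $w_0=0$ by uniqueness of analytic continuation of the semigroup orbit. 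This yields density in $L^2_\rho(\ren)$, and hence in $L^2(\ren)$ by approximation via weighted cutoffs.

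For the spectral identification (i), I would show inclusion $\{\lambda_\beta\}\subset \sigma(\mathbf{B})$ from step one, and the reverse inclusion from the resolvent construction: for $\lambda\notin\{\lambda_\beta\}$ one formally writes
\[
(\mathbf{B}-\lambda I)^{-1}g = \sum_{(\beta)} \frac{M_\beta(g)}{\lambda_\beta - \lambda}\,\psi_\beta,
\]
and the gap $|\lambda_\beta-\lambda|\to\infty$ as $|\beta|\to\infty$ makes this operator bounded on $L^2_\rho$. Finiteness of multiplicity of each $\lambda_\beta$ is clear since only finitely many multiindices $\beta$ satisfy $|\beta|=k$ for any fixed $k$. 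For (iii), I would combine this explicit series expression for the resolvent with a truncation argument: the partial sum up to $|\beta|\le K$ is finite-rank, and the tail is bounded by $C/K$ in operator norm, so the resolvent is a norm-limit of finite-rank operators and therefore compact in $L^2_\rho(\ren)$.

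The main obstacle I anticipate is the rigorous justification of completeness for the non-self-adjoint operator $\mathbf{B}$, since orthogonality arguments in the usual Hilbert-space sense do not apply directly; one really needs the biorthogonality with the adjoint eigenfunctions of $\mathbf{B}^*$ (the generalized Hermite polynomials) to make the moment-vanishing argument airtight. A secondary technical point is controlling the pointwise and $L^2_\rho$-behaviour of the derivatives $D^\beta F$ uniformly in $|\beta|$, which is needed both for the Taylor expansion convergence \eqref{s11} and for the tail estimate in the compactness proof; these estimates rely critically on the sharp exponential bound \eqref{F11} for $F$ and a corresponding Cauchy-type estimate for its derivatives.
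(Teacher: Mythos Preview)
The paper does not actually prove this theorem: it is quoted verbatim from \cite{EGKP}, with the preceding subsection (the semigroup representation \eqref{s9}, the Taylor expansion \eqref{s10}, and the Stirling estimate \eqref{s11}) serving only as motivation. Your proposal is precisely an attempt to turn that motivational material into a proof, so in spirit it matches the paper's route exactly.

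Your sketch is broadly correct and you have already identified the real obstruction yourself. One concrete point worth sharpening: in your completeness argument you write ``if $w_0$ were orthogonal in $L^2_\rho$ to every $\psi_\beta$, all moments $M_\beta(w_0)$ would vanish'', but this implication does not hold as stated. The moments $M_\beta(u_0)=\tfrac{1}{\sqrt{\beta!}}\int z^\beta u_0(z)\,\mathrm{d}z$ are dual pairings with the monomials $z^\beta/\sqrt{\beta!}$ (equivalently, with the adjoint polynomials $\psi_\beta^*$ up to lower-order corrections), \emph{not} weighted inner products with $\psi_\beta$. So the argument as written mixes the two dualities. What actually makes completeness go through in \cite{EGKP} is exactly the biorthogonality $\langle\psi_\beta,\psi_\gamma^*\rangle=\delta_{\beta\gamma}$ from \eqref{s17}, together with the fact that the moment problem for the polynomial family $\{\psi_\beta^*\}$ is determinate in $L^2_{\rho^*}$. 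Likewise, your explicit resolvent series and the finite-rank truncation for compactness require knowing that $\{\psi_\beta\}$ is at least a basis (with controlled norms) in $L^2_\rho$, which is again a consequence of the biorthogonal structure rather than something you get for free from the semigroup expansion alone. Once you route everything through the pair $\{\psi_\beta,\psi_\beta^*\}$, your outline becomes the proof.
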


Then, the adjoint operator $\BB^*$ of ${\bf B}$ (in the dual
metric of $L_{\rho}^2(\ren)$ takes the form \ef{ad55}
and is defined
 in the weighted space
$L_{\rho^*}^2(\re^{N})$, with the domain $H^4_{\rho^*}(\ren)$,
where the (dual) weight function is exponentially decaying:
\begin{equation*}
 \tex{
    \rho^*(y) \equiv \frac{1}{\rho(y)} = {\mathrm e}^{-a|y|^\a}
    >0.
    }
\end{equation*}
It is a bounded linear operator \cite{EGKP},
\begin{equation*}
    \tex{ {\bf B}^*:H_{\rho^*}^4(\re^{N}) \to L_{\rho^*}^2(\re^{N}),
    \,\, \mbox{so} \,\,
    \left\langle {\bf B} v, w\right\rangle = \left\langle v, {\bf B}^* w\right\rangle,
    \,\,
    v \in H_{\rho}^4(\re^{N}), \,\,
    w \in H_{\rho^*}^4(\re^{N}).}
\end{equation*}
Moreover, the following theorem establishes the spectral properties of the
adjoint operator which will be very similar to those shown in Theorem\,\ref{Th s1}
for the operator $\bf{B}$.

\begin{theorem}
\label{Th s2}
{\rm (i)} The spectrum of ${\bf B}^*$ consists of eigenvalues of finite multiplicity,
\begin{equation}
\label{s15}
 \tex{
    \s({\bf B}^*)=\s({\bf B}):=\big\{\l_\b =: -\frac{|\b|}{4}\,,\,|\b|=0,1,2,...\big\},
    }
\end{equation}
and the eigenfunctions $\psi_\b^*(y)$
are polynomials of order $|\b|$.

\noi{\rm (ii)}  The subset of eigenfunctions
$\Phi^*=\{\psi_\b^*\}$
is complete and closed $L_{\rho^*}^2(\re^{N})$.

\noi{\rm (iii)} For any $\l \notin \s({\bf B}^*)$ the resolvent
$({\bf B}^*-\l I)^{-1}$
is a compact operator in $L_{\rho^*}^2(\re^{N})$.
\end{theorem}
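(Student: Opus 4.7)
The plan is to prove each item by exploiting the duality relation $\langle \BB v, w\rangle = \langle v, \BB^* w\rangle$ between the weighted spaces $L^2_\rho(\ren)$ and $L^2_{\rho^*}(\ren)$, combined with a direct construction of polynomial eigenfunctions, and then importing Theorem~\ref{Th s1} via this duality. Throughout, the key algebraic observation is that the Euler-type drift $-\tfrac14 y \cdot \n$ acts diagonally on monomials with $-\tfrac14 y \cdot \n (y^\b) = -\tfrac{|\b|}4 \, y^\b$, while $-\D^2$ strictly lowers polynomial degree by $4$.

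For part (i), I would build $\psi_\b^*$ explicitly. Starting from the principal monomial $\frac{1}{\sqrt{\b!}}\, y^\b$ (eigenvector of the drift with eigenvalue $\l_\b = -|\b|/4$), I would set $\psi_\b^*(y) = \frac{1}{\sqrt{\b!}}\, y^\b + Q_\b(y)$ where $Q_\b$ is a polynomial of degree $< |\b|$ to be determined so that $\BB^* \psi_\b^* = \l_\b \psi_\b^*$. Because $-\D^2$ lowers degree by $4$, substituting the ansatz produces a triangular linear system for the coefficients of $Q_\b$ (grouped by total degree), and at each step the diagonal entry of the system is $-\tfrac{|\g|}4 - \l_\b = \tfrac{|\b|-|\g|}4 \ne 0$ for $|\g|<|\b|$, so the system solves uniquely. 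This gives polynomial eigenfunctions of order $|\b|$ for each multiindex $\b$. That the discrete set $\{-|\b|/4\}$ exhausts the spectrum follows from the duality identity combined with Theorem~\ref{Th s1}(iii): any $\l \notin \{-|\b|/4\}$ lies in the resolvent set of $\BB$ in $L^2_\rho$, and standard functional-analytic duality (the transpose of a bounded invertible operator is bounded invertible between the dual spaces) then places $\l$ in the resolvent set of $\BB^*$ in $L^2_{\rho^*}$. Finite multiplicity is transferred by the same duality.

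For part (ii), I would establish the biorthogonality
\begin{equation*}
\langle \psi_\b, \psi_\a^*\rangle_{L^2(\ren)} = \d_{\b\a},
\end{equation*}
which follows from \eqref{s14}: integrating $\frac{(-1)^{|\b|}}{\sqrt{\b!}} D^\b F$ against $\psi_\a^*$ by parts $|\b|$ times and using that $\psi_\a^*$ is a polynomial whose top-order part is $\frac{1}{\sqrt{\a!}} y^\a$ reduces the pairing to $\frac{1}{\sqrt{\b!\,\a!}} \int F(y)\, D^\b(y^\a)\,dy = \d_{\b\a}$ (after normalizing $F$ to have unit integral and noting that the lower-order tail of $\psi_\a^*$ contributes zero because $D^\b$ kills polynomials of lower degree). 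Closedness in $L^2_{\rho^*}(\ren)$ then reduces to showing that any $g \in L^2_{\rho^*}$ orthogonal to every $\psi_\a^*$ vanishes: decomposing the test class, $\langle g, \psi_\a^*\rangle = 0$ for all $\a$ implies $\langle g, y^\a\rangle = 0$ for all $\a$ (by the triangular structure above), and since $\rho^*$ decays like $\eee^{-a|y|^{4/3}}$ the moment problem is determinate, forcing $g = 0$. Completeness then follows by a standard biorthogonal-expansion argument: for $f \in L^2_{\rho^*}$, write $f \sim \sum_\a \langle f,\psi_\a\rangle \psi_\a^*$; convergence in $L^2_{\rho^*}$ is controlled by the dual semigroup $\eee^{\BB^*\t}$, which has the explicit rescaled heat-kernel representation inherited from \eqref{s9} and admits a Taylor-series expansion analogous to \eqref{s10} but with roles of $y$ and $z$ swapped, yielding the desired expansion.

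For part (iii), compactness of $(\BB^* - \l I)^{-1}$ in $L^2_{\rho^*}(\ren)$ is the cleanest step: by Theorem~\ref{Th s1}(iii), $(\BB - \l I)^{-1}$ is compact in $L^2_\rho(\ren)$ for every $\l \notin \s(\BB) = \s(\BB^*)$, and the Banach-space adjoint of a compact operator between reflexive spaces is compact on the dual; since $L^2_{\rho^*}(\ren)$ is naturally identified with the dual of $L^2_\rho(\ren)$ through the unweighted $L^2$-pairing, and $((\BB - \l I)^{-1})^* = (\BB^* - \l I)^{-1}$ on its domain, compactness transfers. The main obstacle I anticipate is part (ii), specifically verifying convergence (not merely formal biorthogonality) of the eigenfunction expansion in $L^2_{\rho^*}$: because $\BB^*$ is non-self-adjoint, completeness of $\{\psi_\a^*\}$ does not follow from abstract spectral theorems, and one must either lean on the explicit semigroup formula \eqref{s9}--\eqref{s12} transposed to the dual side, or invoke a Riesz-basis argument based on the determinacy of the weighted moment problem; the delicate point is uniform control of the partial sums in the $L^2_{\rho^*}$-norm given the polynomial growth of $\psi_\b^*$.
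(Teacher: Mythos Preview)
The paper does not actually prove Theorem~\ref{Th s2}: it is stated as a known result with the attribution ``the next results hold \cite{EGKP}'' preceding Theorem~\ref{Th s1}, and Theorem~\ref{Th s2} is likewise imported from that reference. The only additional content the paper supplies is the explicit closed form \eqref{s16} for the adjoint eigenfunctions,
\[
\psi_\b^*(y)=\tfrac{1}{\sqrt{\b!}}\Big[y^\b + \sum_{j=1}^{[|\b|/4]} \tfrac{1}{j!}\,\D^{2j} y^\b\Big],
\]
together with the biorthogonality relation \eqref{s17}.

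Your proposal is therefore not comparable to a proof in the paper, but it is a reasonable sketch of how the cited result is established, and it is consistent with the structure the paper records. Your triangular-system construction in part~(i) is exactly what produces \eqref{s16}: iterating $-\D^2$ on $y^\b$ and dividing at step $j$ by the eigenvalue gap $\tfrac{|\b|-(|\b|-4j)}{4}=j$ gives the $1/j!$ coefficients. Your biorthogonality computation in part~(ii) is the argument behind \eqref{s17}. Your duality transfer for parts~(i) and~(iii) is the standard route. The one place where your sketch is thinner than what a full proof requires is, as you yourself flag, the $L^2_{\rho^*}$-convergence of the eigenfunction expansion in part~(ii); the argument in \cite{EGKP} does go through the explicit semigroup representation dual to \eqref{s9}--\eqref{s12}, so your instinct there is correct, but the uniform norm control of partial sums needs the kernel estimate \eqref{F11} and is not automatic from biorthogonality alone.
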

It should be pointed out that, since $\psi_0=F$,
\begin{equation*}
 \tex{
     \int_{\re^{N}} \psi_0\, {\mathrm
     d}y =\int_{\re^{N}} F (y) \, {\mathrm
     d}y=1.
     }
\end{equation*}
However, thanks to \eqref{s14} we have that
\begin{equation*}
 \tex{
     \int_{\re^{N}} \psi_\b=0 \quad \hbox{for any} \quad
     |\b|\neq 0.
     }
\end{equation*}
This expresses the orthogonality property to the adjoint
eigenfunctions in terms of the dual inner product. Due to
Theorem\,\ref{Th s2} the adjoint eigenfunctions are polynomials
which form a complete subset in $L_{\rho^*}^2(\re^{N})$ with
exponential decaying weight $\rho^*(y) = {\mathrm e}^{-a|y|^{4/3}}$.
\par
Note that \cite{EGKP}, for the eigenfunctions $\{\psi_\b\}$ of
$\bf{B}$ denoted by \eqref{s14}, the corresponding adjoint
eigenfunctions are {\em generalized Hermite  polynomials} of  the
form
\begin{equation}
\label{s16}
 \tex{
    \psi_\b^*(y):=\frac{1}{\sqrt{\b!}}\big[y^\b + \sum\limits_{j=1}^{[\b/4]}
    \frac{1}{j!} \D^{2j} y^\b\big].
    }
\end{equation}
Hence, the orthogonality condition holds:
\begin{equation}
\label{s17}
    \tex{ \left\langle \psi_\b,\psi_\g \right\rangle=\d_{\b,\g}
    \quad \hbox{for any} \quad \b,\;\g,}
\end{equation}
where $\left\langle \cdot,\cdot \right\rangle$ is the duality
product in $L^2(\ren)$ and $\d_{\b,\g}$ is the Kronecker's delta.
Operators $\bf{B}$ and $\bf{B}^*$ have zero Morse index (no
eigenvalues with positive real parts are available).
\par
The main spectral results  are extended \cite{EGKP} to
$2m$th-order linear {\em poly-harmonic flows}
 \begin{equation}
  \label{PP1}
  \tex{
 u_t= - (-\D)^m u \quad \mbox{in} \quad \ren \times \re_+,
  }
  \end{equation}
  where
 the elliptic equation for the rescaled kernel $F(y)$ takes the
 form
\begin{equation}
\label{s5}
  \tex{
    {\bf B} F \equiv -(-\D_y)^m F + \frac{1}{2m}\, y \cdot \nabla_y F  +\frac{N}{2m} \,F=0
    \quad \hbox{in} \quad \re^{N},\quad \int\limits_{\re^{N}} F(y) \, {\mathrm
    d}y=1.
     }
\end{equation}
In particular, if $m=1$ and $N=1$, we find the classic
second-order \emph{Hermite operator} ${\bf B}$ (see \cite{BS} for
further information)
\begin{equation*}
 \tex{
    {\bf B} F \equiv F'' + \frac{1}{2}\, F' y +\frac{1}{2} \,F=0,
    }
\end{equation*}
whose name is associated with the work of Charles Hermite of 1870,
although such equations and polynomial eigenfunctions of the
adjoint operator $\BB^*=D_y^2- \frac 12\, y D_y$ were obtained
earlier by Jacques C.F.~Sturm in 1836, \cite{St}; see
\cite[Ch.~1]{GalGeom} for history and references.

\setcounter{equation}{0}
\section{${\bf (NEP)_+}$: local bifurcation-branching analysis via a formal approach}
   \label{S4}

\noindent In this section, we show the nonexistence of local
bifurcations points from the trivial solution $(n,f)=(n,0)$ for
the nonlinear operator \eqref{bf2}, and, hence, the existence of
branching from the eigenfunctions of the linear operator ${\bf B}$
when $n$ is sufficiently close to zero. This analysis allows us to
show locally the existence of non-zero solutions of
$\eqref{eigpm}_+$ in the proximity of $n=0$.
\par
Throughout this section, we write the operator \eqref{bf2} in the form $\mc{F}(n,f)$, denoted by \eqref{funct},
where $\mc{L}(\a,n)$ and $\mc{N}(n,f)$  are the corresponding
linear and nonlinear parts of the operator \eqref{bf2} under the
abstract framework already explained above (first section).
This operator is of class $\mc{C}^r$, with $r$ sufficiently big to
make all the subsequent derivatives exist. Moreover, when $n=0$
we have the operator ${\bf B}$ defined by \eqref{i4}, for which we
showed in the previous section its complete spectral theory. It is
apparent that its eigenvalues $\l_k$ and eigenfunctions $\psi_k$,
with $k\geq 0$, will determine the precise number of branches from
$(n,f)=(0,\psi_k)$.

Let us note that the nonlinearity condition assumes that the
functions $f$ are sufficiently smooth and have ``transversal"
zeros with a possible accumulating point at a finite interface
only. Also, for each $k\geq 0$, denoting $n_{0,k}=0$, we find that
\begin{equation}
\label{bf5}
 \tex{
    \ker[\mc{L}(n_{0,k})]=
    \ker\big({\bf B} + \frac{k}{4}\, I \big)= \mathrm{span\,}\{\psi_\b, \, |\b|=k
    \} \quad \hbox{for any}
    \quad k=0,1,2,3,\cdots\,,
    }
\end{equation}
where $\mc{L}(n_{0,k}):= \mc{L}(\a_k(0),n_{0,k})$. Then,  due to
Fredholm's alternative (see e.g., \cite{Deim}),
\begin{equation*}
    \tex{R[\mc{L}(n_{0,k})]= \big\{ u\in \mc{C}(\bar \O)\;:\; \int_\O u \,\psi_k =0\big\},}
    \quad \mbox{such that}
\end{equation*}
\begin{equation*}
 \tex{
    \ker\big({\bf B}+\frac{k}{4}\, I\big) \oplus R[\mc{L}(n_{0,k})] = L_{\rho}^2(\re^{N})
    \quad \hbox{for any}\quad k=0,1,2,\cdots\, .
    }
\end{equation*}
Thus, $\psi_k \notin R[\mc{L}(n_{0,k})]$ for any $k\geq 0$. It is
clear that the operator $\mc{L}(n_{0,k})={\bf B} + \frac{k}{4}\,
I$ is Fredholm, i.e., $R[\mc{L}(\a,n)]$ is a closed subspace of
$L_{\rho}^2(\re^{N})$ and
\begin{equation*}
    \tex{ \hbox{dim} \ker(\mc{L}(\a,n))< \infty, \quad \hbox{codim}\,R[\mc{L}(\a,n)]< \infty,}
\end{equation*}
at least for each $n \approx 0^+$. Then, the operators
$\mc{L}(n_{0,k})$ are Fredholm of index zero. Indeed, we already
know that the first eigenvalue $\l_0=0$  is a simple one of the
operator $\mc{L}(n_{0,0})= {\bf B}$, so its algebraic multiplicity
is 1. Hence, we will apply the classical results of
Crandall--Rabinowitz \cite{CR} about bifurcation for simple
eigenvalues in order to prove the  nonexistence of bifurcation
points from the branch of trivial solutions at the value $n_0=0$.

Note that, due to Theorems \ref{Th s1} and \ref{Th s2}, for any $k
\ge 1$, the algebraic multiplicity is equal to the geometric ones,
so we are not dealing with the problem of introducing the
generalized eigenfunctions (no Jordan blocks are necessary for
restrictions to eigenspaces).
\par
On the other hand, when dealing with essentially non-analytic
functions of $n$, at $n=0$ as in \ef{bf2}, we cannot use standard
apparatus of bifurcation-branching theory even in the case of
finite regularity; cf. \cite{Deim, KZ, VainbergTr}. This reflects
the main partially technical but often principal difficulties of
such a branching study.
\par
Once the assumptions are established, we introduce the
following concept, which will play a  role in the forthcoming
analysis. In dynamical system theory, such concepts are typical
for characterizing various types of bifurcations.
\begin{definition}
\label{De bf1} $(n_{0,k},0)$, with $n_{0,k}=0$ for any
$k=0,1,2\cdots$, is a bifurcation point for equation \eqref{sf5}
from the curve of trivial solutions $(n,0)$, if there exists a
sequence
\begin{equation*}
    \tex{ (n_{k_m},f_m)\in \re \times (H_{\rho}^4(\ren)\setminus \{0\}),}
\end{equation*}
where $m\geq 1$, such that
\begin{equation*}
    \tex{ \lim_{m \rightarrow \infty} (n_{k_m},f_m) = (0,0)
    \,\,\,\,
    \mbox{and} \,\,\,\,
      \mc{F}(n_{k_m},f_m)=0 \quad \mbox{for each \,\,$m\geq 1$}.}
\end{equation*}
\end{definition}

Since ${\bf B} + \frac{k}{4} \,I$ is Fredholm, and
\begin{equation}
\label{bf8}
 \tex{
    D_f \mc{F}(n_{0,k},0)f= \big({\bf B} + \frac{k}{4}\, I\big) f
    }
\end{equation}
for any $k$, on the whole, as was discussed in \cite{Lo}, it is
clear that the condition
\begin{equation*}
 \tex{
    M_k=\ker\big({\bf B} + \frac{k}{4}\, I\big)\geq 1,
     }
\end{equation*}
is necessary for any $(n,f)=(n_{0,k},0)$ to be a bifurcation point
of \eqref{sf5} from $(n,0)$, the trivial solution. However, the
bifurcation does not occur depending only on the linear part.
Therefore, as was shown in \cite{Lo} through simple algebraic
examples, the nature of the nonlinearity determines the
sufficiency condition in order to have such a bifurcation. To be more
precise, the nonlinear part of the operator \eqref{bf2} must
fulfill the assumptions established in the first section of this
paper.

Therefore, according to  bifurcation theory, the points, where a
bifurcation from the branch of trivial solutions occurs, must
depend on the spectrum associated with the linear part and assuming some
conditions on the nonlinear part. Here, we prove the
nonexistence of bifurcation from the branch of trivial solutions
at the value of the parameter $n=0$. Moreover, due to the rescaled
relation between the linear operators ${\bf B}$ \eqref{i4} and
$\mc{L}(\a,n)$ defined by \eqref{NL1NN}, it turns out that {\em
there is no bifurcation point from the branch of trivial solutions
at any $n \ge 0$}.

\subsection{Bifurcation-branching for simple eigenvalues}

\noindent
 Firstly, we present a result that provides us with the nonexistence
of the branch emanating from the trivial solutions at the point
$(n,f)=(n_{0,k},0)$, with $n_{0,k}=0$, when $\l_k$ is a simple
eigenvalue. Secondly, consistent with some recent findings
\cite{PV}, we also show that there exists a branching from the
eigenfunction $\psi_0$ at the value of the parameter $n_{0,0}=0$.
We actually know that $\l_0=0$ is a simple eigenvalue of $\BB$ in
a general setting in $\ren$,
 but we cannot
assure that it is the only such one in other geometries. For
instance, in 1D and in the radial setting, {\em all eigenvalues
are simple}, so we can apply this simplified analysis.

Thus, the calculus below  will be valid for any $k$ such that
$\l_k$ is simple (under suitable restrictions). However, to avoid
excessive notation, we make all the computations for the  case
$k=0$, which is always special and simpler.

\begin{lemma}
\label{Le bf2}
  Under the  regularity convention and assumptions in Section
  $\ref{S1.4}$:

 {\rm (i)} $(n,f)=(n_{0,0},0)$, with $n_{0,0}=0$, is not a bifurcation point for
the stationary equation
\begin{equation*}
     \tex{ \mc{F}(n,f)=0 \quad (f \in C_0(\ren) \,\,\, \mbox{or} \,\,\, H^4_\rho(\ren); \,\,\mbox{cf.}
     \,\,(\ref{H41}))}.
\end{equation*}

{\rm (ii)} $(n,f)=(n_{0,0},\psi_0)$ is a branching point for the
stationary solutions of the functional $\mc{F}(n,f)$. Furthermore,
let $Y_0$ be a subspace of $H_{\rho}^4(\ren)$,
\begin{equation*}
    \tex{Y_0:= \{ u\in \mc{C}(\ren)\;:\; \int_{\ren} u \,\psi_0 =0\},\quad
    \hbox{such that} \quad \ker(\mc{L}_{0,0}) \oplus Y_0 = H_{\rho}^4(\ren).}
\end{equation*}
Then, there exists $\e>0$ and two maps of the class $\mc{C}^{r-1}$,
\begin{equation*}
     \tex{ n_0\,:\, (-\e,\e) \to \re, \quad
     \Phi_0\,:\, (-\e,\e) \to Y_0,} \quad
    \tex{ n_0(0)= 0, \quad \Phi_0(0)= 0,}
\end{equation*}
such that, for $n_{0,0}=0$
and for each $s\in (-\e,\e)$,
\begin{equation}
\label{bf12}
     \tex{ \mc{F}(n_0(s),f_0(s))=0, \quad
      f_0(s):=\psi_0+s\Phi_0(s),}
\end{equation}
where $\psi_0$ is the eigenfunction associated with the simple
eigenvalue $\l_0$ of ${\bf B}$, so that
$\dim \ker [\mc{L}(n_0)]=1$. Moreover, there exists $\rho>0$ such
that if $\mc{F}(n,f)=0$ and $(n,f)\in B_r(0,\psi_0)$, then
either $f=\psi_0$, or $(n,f)= (n_0(s),f_0(s))$ for some $s\in
(-\e,\e)$, where $B_r(0,\psi_0)$ is the a ball of the radius
r centered as $(0,\psi_0)$ in $\re^2 \times L^2(\re^{N})$.
Furthermore, if $\mc{F}$ is analytic, so are $n_0(s)$, $\a_0(s)$,
and $f_0(s)$ near $0$.
\end{lemma}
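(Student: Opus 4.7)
The plan is to apply the Crandall--Rabinowitz simple-eigenvalue theorem, via the Lyapunov--Schmidt reduction, at two distinct base points---the trivial branch $(n,0)$ for part~(i) and the point $(0,\psi_0)$ for part~(ii)---and to rely on the regularity convention of Section~\ref{S1.4} so that the map $n\mapsto\mc{N}(n,f)=\n\cdot((1-|f|^n)\n\D f)$ is $\mc{C}^{r-1}$ in $n$ near $n=0^+$ in the reduced Hammerstein--Uryson form~\ef{nn4}.  Throughout I work in the Hilbert pair $(H^4_\rho(\ren),L^2_\rho(\ren))$ of Section~\ref{S3} and rely on Theorems~\ref{Th s1}--\ref{Th s2}, which give that $\BB$ is Fredholm of index $0$, $\ker\BB=\mathrm{span}\{\psi_0\}$ is simple, and the dual cokernel is $\mathrm{span}\{\psi_0^*\equiv 1\}$; the splittings $H^4_\rho=\ker\BB\oplus Y_0$ and $L^2_\rho=\mathrm{span}\{\psi_0^*\}\oplus R(\BB)$ then furnish the standard Lyapunov--Schmidt projectors $P$, $Q$.

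For part~(i), $\mc{F}(n,0)\equiv 0$ (since $\mc{N}(n,0)=0$ for all $n$), so $(n,0)$ is a trivial branch and $D_f\mc{F}(0,0)=\BB$; simplicity of $\l_0=0$ gives the first Crandall--Rabinowitz hypothesis, but the transversality $D_n D_f\mc{F}(0,0)\psi_0\notin R(\BB)$ is what \emph{fails}.  Indeed, with $\a=\a_0(n)=N/(4+Nn)$, direct differentiation together with the identity $y\cdot\n\psi_0=4\D^2\psi_0-N\psi_0$ (which follows from $\BB\psi_0=0$) gives $\frac{d}{dn}\mc{L}(\a_0(n),n)|_{n=0}\psi_0=-\tfrac{N}{4}\D^2\psi_0$; pairing with $\psi_0^*\equiv 1$ then gives $-\tfrac{N}{4}\int_{\ren}\D^2\psi_0\,dy=0$ by the divergence theorem (using the decay~\ef{F11}), so transversality fails and no Crandall--Rabinowitz bifurcation from the trivial branch occurs at $n=0$.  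Equivalently, and in a stronger form, the scaling $\tilde y=y/a$ with $a^4=(4+Nn)/4$ conjugates $\mc{L}(\a_0(n),n)$ to $a^{-4}\BB$, so $\s(\mc{L}(\a_0(n),n))=a^{-4}\s(\BB)$ contains the eigenvalue $0$ for \emph{every} $n\ge 0$: the eigenvalue never crosses zero, ruling out bifurcation from the trivial branch at any parameter value.

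For part~(ii), the base point is moved to $(0,\psi_0)$.  Direct substitution gives $\mc{F}(0,\psi_0)=\BB\psi_0+\mc{N}(0,\psi_0)=0$ because $|\psi_0|^0\equiv 1$ kills $\mc{N}(0,\cdot)$, and again $D_f\mc{F}(0,\psi_0)=\BB$ has one-dimensional kernel.  The decisive new datum is that, treating $\a$ as an independent nonlinear-eigenvalue parameter, $D_\a\mc{F}(0,\psi_0)=\psi_0$ and the duality pairing $\langle \psi_0,\psi_0^*\rangle=\int_{\ren} F\,dy=1\ne 0$ shows $\psi_0\notin R(\BB)$; consequently the augmented operator $(\a_1,f_1)\mapsto \a_1\psi_0+\BB f_1$ is surjective with one-dimensional kernel $\{(0,c\psi_0)\}$.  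I would then run Lyapunov--Schmidt: parametrize $f=\psi_0+s\psi_0+\phi$ with $\phi\in Y_0$ and $\a=N/4+\a_1$; solve the range projection $(I-Q)\mc{F}=0$ by the IFT for $\phi=\phi(n,s,\a_1)$ with $\phi(0,0,0)=0$ (using that $\BB|_{Y_0\to R(\BB)}$ is an isomorphism); and then solve the scalar bifurcation equation $G(n,s,\a_1):=\langle\mc{F},\psi_0^*\rangle=0$ for $\a_1=\a_1(n,s)$ via the IFT, the key derivative being $\p_{\a_1}G(0,0,0)=\langle\psi_0,\psi_0^*\rangle=1$.  Re-imposing the mass-conservation constraint $\a=\a_0(n)$ cuts the resulting two-parameter solution sheet down to the advertised one-dimensional curve $s\mapsto(n_0(s),f_0(s))$ with $f_0(s)=\psi_0+s\Phi_0(s)$, $\Phi_0(s)\in Y_0$, $\Phi_0(0)=0$ and $n_0(0)=0$; the $\mc{C}^{r-1}$ regularity, the local uniqueness in $B_r(0,\psi_0)$ and the analytic version all come from the corresponding parts of the IFT and of Lyapunov--Schmidt.

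The main obstacle, shared by both parts, is that $n\mapsto\mc{N}(n,f)$ is not classically smooth at $n=0^+$ whenever $f$ has a nontrivial zero set: by \ef{nn1}, $|f|^n-1=n\ln|f|+O(n^2\ln^2|f|)$ blows up logarithmically on $\{f=0\}$, and $\psi_0=F$ is oscillatory with infinitely many sign changes.  Every $n$-derivative invoked above is therefore justified only in the weak sense~\ef{nn2} through the Hammerstein--Uryson form~\ef{nn4} on the exponentially weighted spaces of Section~\ref{S3}, which is exactly what the regularity convention provides; a rigorous $\mc{C}^{r-1}$ justification without invoking the convention would require a.e.\ transversality of the zeros of $\psi_k$ along the deformation and is available only under such a hypothesis in Appendix~A.
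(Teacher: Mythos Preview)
For part~(i) your argument matches the paper's: both show the Crandall--Rabinowitz transversality condition fails by pairing $\mc{L}_{1,0}\psi_0$ with $\psi_0^*=1$. Your rewriting via $y\cdot\n\psi_0=4\D^2\psi_0-N\psi_0$ is equivalent to the paper's integration-by-parts computation, and your scaling observation (that $\mc{L}(\a_0(n),n)$ is conjugate to $a^{-4}\BB$, so $0\in\s(\mc{L})$ for every $n$) is a genuine strengthening the paper alludes to elsewhere but does not invoke in the proof itself.

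For part~(ii) your route differs from the paper's. The paper applies the implicit function theorem directly to the auxiliary map $\mc{G}(s,n_0,\Phi_0)=\mc{F}(n_0,\psi_0+s\Phi_0)/s$, whereas you free the eigenvalue parameter $\a$, run Lyapunov--Schmidt to build a two-parameter sheet $(n,s)\mapsto(\a_1(n,s),f(n,s))$, and then re-impose $\a=\a_0(n)$. There is a real gap in that last step: the constraint $H(n,s):=\a_1(n,s)-\bigl(\a_0(n)-\tfrac{N}{4}\bigr)=0$ is \emph{degenerate} at $(0,0)$, with $\p_s H(0,0)=\p_n H(0,0)=0$. Indeed $\p_s\a_1(0,0)=-\langle\BB\psi_0,\psi_0^*\rangle=0$, while $\p_n\a_1(0,0)=-\langle D_n\mc{F}(0,\psi_0),\psi_0^*\rangle=-\tfrac{N^2}{16}$ (the divergence term integrates to zero and $\int y\cdot\n\psi_0=-N$), which exactly cancels $\p_n\a_0(0)=-\tfrac{N^2}{16}$. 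This cancellation is the same scaling symmetry you exploited in~(i): along $\a=\a_0(n)$ the zero eigenvalue never crosses, so the sheet and the constraint are tangent. Hence the IFT does \emph{not} deliver the curve $s\mapsto(n_0(s),f_0(s))$; a second-order analysis is required, and that is precisely what the paper carries out after the lemma in deriving~\ef{bf21}--\ef{bf22} for $\g_{0,1}$. Your framework is sound up to the reduced bifurcation equation, but the sentence ``re-imposing the mass-conservation constraint cuts the sheet down to a curve'' hides the actual branching analysis.

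A smaller bookkeeping point: your parametrisation $f=\psi_0+s\psi_0+\phi$ with $\phi\in Y_0$ gives $\langle f-\psi_0,\psi_0^*\rangle=s\ne 0$, which does not match the lemma's form $f_0(s)=\psi_0+s\Phi_0(s)$ with $\Phi_0(s)\in Y_0$; a reparametrisation would be needed even after the degeneracy is resolved.
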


\begin{proof}
 (i)
To show that $(n,f)=(n_{0,0},0)$, with $n_{0,0}=0$, is not  a
bifurcation point, we check that the transversality condition of
Crandall--Rabinowitz \cite{CR} is not satisfied. Under the
regularity  assumptions and convention of Section \ref{S1.4}
imposed  on the linear and nonlinear parts, we obtain that
$\mc{L}(n):= D_f \mc{F}(n,0)$, where standard calculations of the
derivative are allowed:
\begin{align*}
    \tex{ D_f \mc{F}(n,f)g} & \tex{ := \lim_{h\rightarrow 0} \frac{\mc{F}(n,f + h g)-\mc{F}(n,f)}{h}}
     \\ &
    \tex{ = \lim_{h\rightarrow 0} \frac{ -\nabla \cdot (|f+hg|^n \nabla \Delta (f+hg) )
    +\frac{1- \a n}{4} y \cdot \nabla (f+hg)+\a (f+hg) - \mc{F}(n,f)}{h}} \\ &
    \tex{ = \lim_{h\rightarrow 0} \frac{ \mc{F}(n,f) + h \left[-\nabla \cdot (n |f|^{n-1} g
     \nabla \Delta f )-
    \nabla \cdot ( |f|^n \nabla \Delta g ) + \frac{1- \a n}{4} y \cdot \nabla g+ \a g\right] + o(h) - \mc{F}(n,f)}{h}}\\ &
    \tex{= - \nabla \cdot (n|f|^{n-1} g \nabla \Delta f) -
    \nabla \cdot (|f|^n \nabla \Delta g) + \frac{1- \a n}{4}\, y \cdot \nabla g + \a g\,,}
\end{align*}
for any $g \in C_0(\ren)$, or $H^4_\rho(\ren)$; cf. (\ref{H41}).
Moreover, owing to the spectral theory
shown in Section\;\ref{S3}, we find that there exists a singular
value $\l_0$, the eigenvalue of the operator $\mc{L}(n_{0,0})$,
with $n_{0,0}=0$, associated with the eigenfunction $\psi_0$. Set
$\mc{L}_{0,0}:=\mc{L}(n_{0,0})$ and $\mc{L}_{1,0}:=\frac{\mathrm
d}{{\mathrm d} n} \mc{L}(n_{0,0})$ where
\begin{equation*}
    \tex{ \frac{\mathrm d}{{\mathrm d} n} \mc{L}(n):=
    \big( -\frac{N(4+Nn)-N^2 n}{4(4+Nn)^2} + \frac{\l_k}{4} \big)
    y \cdot \nabla - \frac{N^2}{(4+Nn)^2}\, I.}
\end{equation*}
Then, $\ker(\mc{L}_{0,0})= \mathrm{span\,}\{\psi_0\}$ and the
following transversality condition does not hold:
\begin{equation}
\label{bf15}
    \tex{ \mc{L}_{1,0}\psi_0 \notin R[\mc{L}_{0,0}].}
\end{equation}
Indeed, suppose
\begin{equation*}
 \tex{
    \mc{L}_{1,0}\psi_0 =
    - \frac{N}{16}\, y \cdot \nabla \psi_0
    - \frac{N^2}{16}\,\psi_0 \in
    R[\mc{L}_{0,0}], \quad \mbox{so}
    }
\end{equation*}
\begin{equation}
\label{bf16}
 \tex{
    - \frac{N}{16}\, y \cdot \nabla \psi_0
    - \frac{N^2}{16}\,\psi_0  =
    -\D^2 v +\frac{1}{4} \,y \cdot \nabla v  + \frac{N}{4}\,  v,
    }
\end{equation}
for any $v \in R[\mc{L}_{0,0}]$.
 Hence, multiplying \eqref{bf16} by the adjoint eigenfunction $\psi_0^*$
and integrating by parts yields
\begin{align*}
 \tex{
    - \frac{N^2}{16} \int \psi_0 \psi_0^* } & \tex{ - \frac{N}{16} \int \psi_0^*
    y \cdot \nabla \psi_0 = - \frac{N^2}{16} \int \psi_0 \psi_0^* + \frac{N}{16}
    \int \hbox{div} (\psi_0^* y) \psi_0 }
    \\ & \tex{
   = - \frac{N^2}{16} \int \psi_0 \psi_0^* + \frac{N}{16} \int \psi_0
    y \cdot \nabla \psi_0^* + \frac{N^2}{16} \int \psi_0 \psi_0^* }
    \\ & \tex{= - \frac{N^2}{16} \int \psi_0 \psi_0^* + \frac{N^2}{16} \int \psi_0 \psi_0^* =  0,
    }
\end{align*}
which implies the nonexistence of bifurcation at  $n_{0,0}=0$ from
the trivial solution.

\ssk

(ii) We now prove the final statement of Lemma \ref{Le bf2}. Then,
under the same necessary regularity assumptions and the convention
in Section \ref{S1.4}, we define the auxiliary operator
\begin{equation}
\label{bf17}
    \tex{ \mc{G}(s,n_0,\Phi_0):=} \left\{\begin{array}{ll}
    \tex{ \frac{\mc{F}(n_0,\,\psi_0+ s\Phi_0)}{s},}  & \tex{ \hbox{if}\quad s\neq 0,}\\
    \tex{ D_f \mc{F}(n_0,\,\psi_0)\Phi_0,} & \tex{ \hbox{if}\quad s= 0,}
    \end{array}\right.
\end{equation}
for $s\in\re$, $s\approx 0$, $n_0\in \re$, and $\Phi_0 \in Y_0$.
Since $\mc{F}$ is $\mc{C}^r$ in both variables, $\mc{G}$ is
$\mc{C}^{r-1}$ in all its arguments. The number $r$ is
sufficiently large ensuring that the derivatives employed in the
sequel exist. Moreover, by the  definition, we have that
\begin{equation}
\label{bf18}
    \tex{\mc{G}(0,0,0)=0,}
\end{equation}
since $D_f \mc{F}(n_0,0)\Phi_0=0$ by construction. Then, if the
zeros of the eigenfunction $\psi_0$ are transversal
a.e.\footnote{For $\psi_0(y)$ which is a radial function, this is
very probable; however we do not have a fully convincing proof.},
we find that
$$1-\left|\psi_0\right|^{n_0(s)} \approx 0$$
in the weak sense (or even ``a.e.") for a sufficiently small $s$. Hence,
\begin{equation}
\label{bf19}
    \begin{split}
    \tex{
     D_{(n_0,\Phi_0)} \mc{G}(0,0,0)
     }
      &
      \tex{ (n_0,\Phi_0) =
    \lim_{h\rightarrow 0} \frac{\mc{G}(0,n_0 h,h \Phi_0)-\mc{G}(0,0,0)}{h}
     }
    \\ &
    \tex{
    =\lim_{h\rightarrow 0} \frac{D_f \mc{F}(n_0,\psi_0)(h \Phi_0)}{h}   }
    \\ &
     \tex{
     = \lim_{h\rightarrow 0} (\mc{L}_{0,0}+ h n_0 \mc{L}_{1,0})
      \Phi_0 +o(h) = \mc{L}_{0,0}\Phi_0.
    }
    \end{split}
\end{equation}

Thus, owing to
\begin{equation*}
    \tex{ Y_0 \oplus \ker(\mc{L}_{0,0})=H_{\rho}^4(\ren) ,}
\end{equation*}
the operator
\begin{equation*}
    \tex{ D_{(n_0,\Phi_0)} \mc{G}(0,0,0)\;:\;\re \times Y \to L_{\rho}^2(\ren)}
\end{equation*}
is an isomorphism. Then applying the implicit function theorem, we
deduce the existence and uniqueness of two $\mc{C}^{r-1}$
functions
\begin{equation*}
     \tex{ n_0\,:\, (-\e,\e) \to \re, \quad
     \Phi_0\,:\, (-\e,\e) \to Y_0,} \quad \mbox{such that}
\end{equation*}
\begin{equation*}
    \tex{ n_0(0)= 0, \quad \Phi_0(0)= 0, \quad \mc{G}(s,n_0(s),\Phi_0(s)).}
\end{equation*}
\end{proof}


Throughout the rest of this section, we calculate the possible
types of local bifurcations. According to our formal analysis
above, it follows that \eqref{sf5} has a local curve of solutions
\begin{equation*}
    \tex{ (n_0(s),f_0(s)), \quad f_0(s):=\psi_0 +s\Phi_0(s),}
\end{equation*}
emanating from $(n,f)=(n,\psi_0)$ at $n=0$. This shows  in some natural
sense that the component emanating at $(n,f)=(0,\psi_0)$
consists of two subcontinua, $\mf{C}_0^+$ and $\mf{C}_0^-$ in the
direction of $\Phi_0$ and  $-\Phi_0$, respectively, where $\Phi_0$ belongs to the orthogonal space $Y_0$ to
the eigenspace spanned by the eigenfunction associated with the simple eigenvalue
$\l_0=0$. Moreover, those functions $(n_0(s),f_0(s))$ admit the
next expansions of the form: as $s \to 0$,
\begin{equation}
 \label{ss1}
     \begin{array}{l}
     \tex{ n_0(s):=s \g_{0,1}+s^2 \g_{0,2}+o(s^2),} \ssk \\
     \tex{ f_0(s):=\psi_0+ s \Phi_{0,1} +s^2 \Phi_{0,2}+ o(s^2),}
     \end{array}
\end{equation}
for certain real numbers $\g_{0,l}$ and some functions
$\Phi_{0,l}\in Y_0$, with $l=1,2$. In addition, since we are
assuming that $\a$ is dependent on $n$, this eventually yields $$
 \tex{
 \a_0(s)=
\frac{N}{4} + s \eta_{0,1} + s^2 \eta_{0,2}+ o(s^2),
 }
$$
   where $\eta_{0,i} \in \re$ for any $i=1,2,\cdots$.
\par

Now, substituting  the expansions defined by \ef{ss1} and the corresponding expansion
for the parameter $\a_0(n)$ into the  nonlinear
elliptic equation \eqref{sf5} yields
\begin{align*}
    \tex{
    -\nabla \cdot(
    } &
    \tex{
    \left| \psi_0+ s  \Phi_{0,1}\right|^{(s \g_{0,1}+o(s))}
    \nabla \D \psi_0) -s \nabla \cdot(\left| \psi_0+ s
    \Phi_{0,1}\right|^{(s \g_{0,1}+o(s))}
    \nabla \D  \Phi_{0,1})
    } \\ &
    \tex{ -s^2
    \nabla \cdot(\left| \psi_0+ s \Phi_{0,1}\right|^{(s \g_{0,1}+o(s))}
    \nabla \D  \Phi_{0,2})
    + \frac{1}{4}\, y \cdot\nabla (\psi_0+ s \Phi_{0,1}+o(s))
    }  \\ &
    \tex{ - s \frac{ \left(\frac{N}{4} + s \eta_{0,1} + s^2 \eta_{0,2}+ o(s^2)\right)
     \g_{0,1}}{4}\, y \cdot \nabla   (\psi_0+ s \Phi_{0,1}+o(s))
     }
    \\ &
    \tex{
    + \left(\frac{N}{4} + s \eta_{0,1} + s^2 \eta_{0,2}+ o(s^2)\right)
    \big(\psi_0+ s \Phi_{0,1} + o(s)\big)  =0\,.
    }
\end{align*}
 Next,  passing to the limit as $s \rightarrow 0$, by the regularity convention
 (in particular, this assumes the
transversality condition for the zeros of the eigenfunction
$\psi_0$), we have that $|\psi_0|^{s \g_{0,1}}=1+ s \g_{0,1} \ln
|\psi_0| + o(s)$, and according to the spectral theory shown in Section \ref{S3} we find that
\begin{equation*}
 \tex{
    {\bf B} \psi_0 \equiv
    \big(-\D^2+\frac{1}{4}\, y \cdot \nabla +\frac{N}{4}\, I \big)\psi_0=0\,.
    }
\end{equation*}
Hence, dividing the rest of the terms by
$s$ and passing to the limit as $s \rightarrow 0$ gives
\begin{equation}
\label{bf21}
 \tex{
    {\bf B}  \Phi_{0,1}=
    \g_{0,1} \left[\frac{N}{16}\, y \cdot \nabla  \psi_0+ \nabla
     \cdot (\ln |\psi_0| \nabla \Delta \psi_0)\right] +
    \eta_{0,1} \psi_0\,.
     }
\end{equation}
Now, applying  Fredholm's theory \cite{Deim} to \eqref{bf21}
yields that there exists a function $\Phi_{0,1}$, which solves
\eqref{bf21} if and only if the right hand side is orthogonal to
${\rm ker} \,\BB$, i.e., to the eigenfunction $\psi_0^*=1$ of the
adjoint operator
 ${\bf B}^*$. This uniquely defines the coefficient:
\begin{equation}
\label{bf22}
 \tex{
    \g_{0,1}:= \frac{\eta_{0,1} \left\langle \psi_0^*,\psi_0\right\rangle}{\frac{N}{16}\,
    \left\langle \psi_0^*,y \cdot \nabla  \psi_0\right\rangle + \left\langle \psi_0^*,
    \nabla \cdot (\ln |\psi_0| \nabla \Delta \psi_0)\right\rangle}= \frac{\eta_{0,1}}{\frac{N}{16}\,
    \left\langle 1,y \cdot \nabla  \psi_0\right\rangle + \left\langle 1,
    \nabla \cdot (\ln |\psi_0| \nabla \Delta
    \psi_0)\right\rangle}\,,
    }
\end{equation}
 provided that the denominator does not vanish (notably, a difficult
 property to prove or even to verify numerically).
Therefore, any different branching-type in  the vicinity of $n=0$
from the first eigenfunction $\psi_0$ of the operator ${\bf B}$
will depend on the values of the coefficients $\g_{0,1}$ and
$\eta_{0,1}$ related by \ef{bf22}.

\subsection{Bifurcation-branching for semisimple eigenvalues}

\noindent Hereafter, in this section, we focus on the case when
the kernel is multidimensional. Note that, the question of local
bifurcation at the value of the parameter, for which the
corresponding eigenvalue is simple has been extensively studied in
literature. In particular, as was shown above, for \eqref{sf5},
under some assumptions over the nonlinearity, we proved that no
bifurcation takes place at $n=0$ from the branch of the trivial
solution, and when the eigenvalue is simple. However, for any
neighbourhood around $n=0$, a branch of solutions emanates from
the associated eigenfunction in the direction of the orthogonal
subspace $Y_0$.

On the other hand, for eigenvalues with higher multiplicity, we
prove that the nontrivial solutions emanating from the
eigenfunctions $\psi_k$ at the value of the parameter $n_{0,k}=0$,
for any $k \geq 1$, are tangent to a manifold $Y_k$.

Also, in general, it is not completely understood
how many branches emanate from the trivial solution, which remains
an open problem and it can only be obtained for some specific
examples.

For the case of bifurcation from the branch of trivial
solutions $(n,f)=(n,0)$, there exist some results supposing that
the operators are potential (see \cite{DGM} and \cite{R}) and
very few for non-gradient, non-self-adjoint operators, \cite{KHK}.

Here, we provide a number of possible branches of
bifurcation-branching when $n$ is close to $0^+$ from the
eigenfunctions $\psi_k$ under some conditions imposed over certain
values.
\par
Similarly to the case of simple eigenvalues, we already know that
\begin{equation}
 \label{Mk1}
 \tex{M_k=\dim \ker\big({\bf B} + \frac{k}{4}\, I\big)\geq 1,}
\end{equation}
for any $k\geq 1$, and the inequality is strict in dimensions $N
\ge 1$. Note that $\l_k=- \frac k4$ is not a simple eigenvalue and
\eqref{bf5} is fulfilled by $\psi_k$ as the eigenfunctions
associated with those semisimple eigenvalues of ${\bf B}$, $\l_k$,
such that $\psi_k:=\sum_{\left|\b\right|=k} c_\b \hat{\psi}_\b$,
for every $k\geq 1$ and under the natural ``normalizing"
constraint
\begin{equation}
    \label{br15}
    \tex{\sum_{\left|\b\right|=k} c_\b=1.}
\end{equation}
Subsequently, under the circumstances imposed for the
nonlinearity, it is apparent that
\begin{equation*}
    \tex{ \dim \ker\big({\bf B} + \frac{k}{4}\, I\big)=
    \hbox{codim}\,
    R\big({\bf B} + \frac{k}{4}\, I\big)= M_k,}
\end{equation*}
for any $k\geq 1$. Thus, as was discussed above, the operator
\eqref{bf8} is Fredholm of index zero since it is a compact
perturbation of the identity.
\par
It should be pointed out that the odd crossing number condition
might fail, so we are not distinguishing between odd or even
multiplicities (see the works Ambrosetti \cite{A} for gradient
operators and by Kr\"{o}mer--Healey--Kielh\"{o}fer \cite{KHK} for
more general operators). It is classically known that, when the multiplicity
is odd, there is always a bifurcation. However, when the
multiplicity is even, the bifurcation depends strongly on the
nonlinearity.
\par
The next theorem is one of the main results of this paper.

\begin{theorem}
\label{Th bf3} Let the assumptions  for the linear and nonlinear
part of the functional $\mc{F}$ be satisfied, together with the
regularity convention in Section $\ref{S1.4}$, and $\ef{Mk1}$
hold.
  Then:

  {\rm (i)}
$(n_{0,k}=0,0)$, for any $k\geq 0$, is not a bifurcation point, and;

{\rm (ii)} if $$
 \tex{
 \ker\big({\bf B} + \frac{k}{4}\, I\big) \oplus Y_k = H_{\rho}^4(\ren),
  }
  $$
where the subspace $Y_k$ is defined by
\begin{equation*}
    \tex{Y_k:= \big\{ u\in \mc{C}(\ren)\;:\; \int_{\ren} u \,\psi_k =0\big\},}
\end{equation*}

\noi there exists $\e>0$ and two maps of class $\mc{C}^{r-1}$,
\begin{equation*}
     \tex{ n_k\,:\, (-\e,\e) \to \re, \quad
     \Phi_k\,:\, (-\e,\e) \to Y_k,}
\end{equation*}
such that, for $n=0$ and for each $s\in (-\e,\e)$,
\begin{equation}
\label{bf51}
     \tex{\mc{F}(n_k(s),f_k(s))=0, \quad
      f_k(s):=\psi_k+s \Phi_k(s).}
\end{equation}
Furthermore, if $\mc{F}(n,f)$ is analytic in a neighbourhood of
$(0,\psi_k)$, $k=1,2,...$, so are $n_k(s)$, $\a_k(s)$, and
$f_k(s)$ near $s=0$,
 a countable number of branches emanate from $\psi_k$ for  $n \approx 0^+$.
\end{theorem}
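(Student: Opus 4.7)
The plan is to run the Lyapunov--Schmidt reduction of Lemma \ref{Le bf2}, now on an $M_k$-dimensional kernel, and to close the analysis via a finite-dimensional \emph{branching system}. Throughout I work under the regularity convention of Section \ref{S1.4}, so that $|\psi_k+s\Phi_k|^n$ admits the expansion \eqref{nn1}--\eqref{nn5} and all logarithmic pairings below are well defined.

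For part (i), I would extend the Crandall--Rabinowitz transversality computation of Lemma \ref{Le bf2}(i) to each element of the multi-dimensional kernel. One computes $\mc{L}_{1,k}:=\frac{d}{dn}\mc{L}(n)\big|_{n=0}=-\frac{N+k}{16}\,y\cdot\nabla-\frac{N^2}{16}I$ and pairs $\mc{L}_{1,k}\hat\psi_\beta$ against the biorthogonal $\psi_\gamma^*$, integrating $y\cdot\nabla$ by parts and using $\BB^*\psi_\gamma^*=-\frac{k}{4}\psi_\gamma^*$; combined with the rescaling observation $y\mapsto(1-\alpha n)^{-1/4}y$ sketched in Section \ref{S1.4}, which conjugates $\mc{L}(\alpha,n)$ to a scaled copy of $\BB$ shifted by a scalar, this reproduces the nonexistence conclusion of Lemma \ref{Le bf2}(i): any putative bifurcation from the trivial branch at $(n_{0,k},0)$ would require a genuine spectral crossing of $\mc{L}(\alpha,n)$, which the rescaling rules out in accordance with the heuristic of Section \ref{S1.4}.

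For part (ii), fix $k\ge 1$, let $P_k$ denote the spectral projector onto $\ker(\BB+\frac{k}{4}I)$, write $\psi_k=\sum_{|\beta|=k}c_\beta\hat\psi_\beta$ with the normalization \eqref{br15}, and seek $f_k(s)=\psi_k+s\Phi_k(s)$ with $\Phi_k(s)\in Y_k$. Define $\mc{G}$ by \eqref{bf17} with $\psi_0$ replaced by $\psi_k$, and split the equation $\mc{G}=0$ into a range part $(I-P_k)\mc{G}=0$ and a kernel part $P_k\mc{G}=0$. The derivative of $(I-P_k)\mc{G}$ at $(s,n,\Phi_k)=(0,0,0)$ in the direction $(n,\Phi_k)\in\re\times Y_k$ equals $\mc{L}_{0,k}\Phi_k$, which restricts to an isomorphism $Y_k\to R[\mc{L}_{0,k}]$ by Theorems \ref{Th s1}--\ref{Th s2}. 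The implicit function theorem applied to the range equation produces $\mc{C}^{r-1}$ maps $n_k(s;c)$, $\Phi_k(s;c)$, $\alpha_k(s;c)$; substituting them into the kernel equation $P_k\mc{F}(n_k,\psi_k+s\Phi_k)=0$ yields an $M_k$-dimensional branching system whose leading order in $s$, after expanding via \eqref{nn1}, reads
\begin{equation*}
\gamma_{k,1}\bigl\langle\psi_\beta^*,\,\nabla\cdot(\ln|\psi_k|\,\nabla\Delta\psi_k)+\tfrac{N+k}{16}\,y\cdot\nabla\psi_k\bigr\rangle+\eta_{k,1}\,\langle\psi_\beta^*,\psi_k\rangle=0,\qquad|\beta|=k,
\end{equation*}
which together with \eqref{br15} determines $(c_\beta,\gamma_{k,1},\eta_{k,1})$ and hence the branch. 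Analyticity of $\mc{F}$ near $(0,\psi_k)$ upgrades the $\mc{C}^{r-1}$ regularity of $n_k(s),\alpha_k(s),f_k(s)$ to analyticity via the analytic implicit function theorem.

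The hard part will be solvability of this branching system. For $M_k=1$ it collapses to the scalar relation \eqref{bf22}, whose denominator is a single $\ln|\psi_k|$-pairing against an oscillatory eigenfunction and whose nonvanishing is already conjectural. For $M_k\ge 2$ the corresponding $M_k\times M_k$ matrix depends nonlinearly on $(c_\beta)$ through logarithmic pairings on the zero set of $\psi_k$, and its nonsingularity must either be postulated as a generic condition or extracted via a topological degree or Lyusternik--Schnirelman argument as in the Rabinowitz trichotomy recalled in Section \ref{S1.4}; this is precisely the mechanism that governs the actual number of branches emanating from a given $\psi_k$. Under generic nondegeneracy at least one branch exists for each $k$, and letting $k$ range over the spectrum \eqref{s13} produces the countable family of branches claimed.
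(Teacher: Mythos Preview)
Your proposal follows essentially the same route as the paper. For (ii), the paper also introduces the auxiliary operator $\mc{G}(s,n_k,\Phi_k)$ (your \eqref{bf17} with $\psi_0$ replaced by $\psi_k$), computes $D_{(n_k,\Phi_k)}\mc{G}(0,0,0)(n_k,\Phi_k)=\mc{L}_{0,k}\Phi_k$, and invokes the implicit function theorem via $Y_k\oplus\ker\mc{L}_{0,k}=H^4_\rho(\ren)$; the branching system you write down is exactly the Fredholm solvability condition for the paper's equation \eqref{bf57}, which is then analysed (for $M_k=2,3$) in the calculations following the theorem. The only packaging difference is that you make the Lyapunov--Schmidt splitting $(I-P_k)\mc{G}=0$, $P_k\mc{G}=0$ explicit before applying the implicit function theorem, whereas the paper applies it directly to $\mc{G}$ and extracts the algebraic system afterwards by expanding $f_k(s)$ in powers of $s$; your version is cleaner in keeping track of which variables the implicit function theorem actually solves for.

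For (i), there is a genuine difference worth flagging. The paper does \emph{not} rely on the rescaling $y\mapsto(1-\a n)^{-1/4}y$; that observation is mentioned in Section~\ref{S1.4} only as a heuristic (``we believe\ldots any proof is also very difficult''). Instead, the paper checks directly that the multi-dimensional transversality condition
\[
\mathrm{span}\{\mc{L}_{1,k}\hat\psi_1,\ldots,\mc{L}_{1,k}\hat\psi_{M_k}\}\oplus R\big[\BB+\tfrac{k}{4}I\big]=L^2_\rho(\ren)
\]
fails, by pairing $\mc{L}_{1,k}\psi_k$ against the adjoint basis $\{\hat\psi_i^*\}$ and reducing to a determinant condition on the $(c_j)$ (carried out explicitly for $k=1$, $M_k=2$). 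Your integration-by-parts computation on $\langle\psi_\gamma^*,\mc{L}_{1,k}\hat\psi_\beta\rangle$ is the right ingredient for this, but you should present it as the direct verification rather than as a consequence of the rescaling conjugation.
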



Note that our earlier result for simple eigenvalues, $k=0$, is
included here, with   similar conclusions.


\begin{proof}
Firstly, we consider the following auxiliary operator:
\begin{equation}
\label{bf52}
    \tex{\mc{G}(s,n_k,\Phi_k):=\left\{\begin{array}{ll}
    \frac{\mc{F}(n_k,\psi_k+ s \Phi_k)}{s}, & \hbox{if}\quad s\neq 0,\\
    D_f \mc{F}(n_k,\psi_k)\Phi_k, & \hbox{if}\quad s= 0,
    \end{array}\right.}
\end{equation}
for $s\in\re$ and close to zero, $n_k\in \re$ and $\Phi_k \in
Y_k$. Since $\mc{F}$ is $\mc{C}^r$ in both variables, $\mc{G}$ is
$\mc{C}^{r-1}$ in all its arguments. As customary, we impose some
regularity conditions making sure that all the derivatives in the
sequel exist. Moreover, by definition we have that
\begin{equation}
\label{bf53}
    \tex{\mc{G}(0,0,0) =0,}
\end{equation}
for every $k\geq 1$. Thus, similarly as done for the case of
simple eigenvalues \eqref{bf19},
\begin{equation}
\label{bf54}
    \begin{split}
    \tex{
     D_{(n_k,\Phi_k)} \mc{G}(0,0,0)
     }
      &
      \tex{ (n_k,\Phi_k) =
    \lim_{h\rightarrow 0} \frac{\mc{G}(0,n_k h,h \Phi_k)-\mc{G}(0,0,0)}{h}
     }
    \\ &
    \tex{
    =\lim_{h\rightarrow 0} \frac{D_f \mc{F}(n_k,\psi_k) h \Phi_k}{h}
   }
    \\ &
     \tex{
     = \lim_{h\rightarrow 0} (\mc{L}_{0,k}+ h n_k \mc{L}_{1,k}) \Phi_k +o(h)
    }
    \\ & = \mc{L}_{0,k}\Phi_k.
    \end{split}
\end{equation}
Hence, if the following condition
\begin{equation*}
    \tex{ Y_k \oplus \ker(\mc{L}_{0,k})=H_{\rho}^4(\ren)}
\end{equation*}
holds, then the operator $D_{(n_k,\Phi_k)} \mc{G}(0,0,0): \re
\times Y_k \rightarrow L_{\rho}^2(\ren)$ is an isomorphism. Consequently,
we can apply the implicit function theorem. Therefore, the
existence and uniqueness of the following two $C^{r-1}$ functions
are guaranteed:
\begin{equation*}
     \tex{ n_k\,:\, (-\e,\e) \to \re, \quad
     \Phi_k\,:\, (-\e,\e) \to Y_k,} \quad \mbox{such that} \quad
\end{equation*}
\par
Now, in order to conclude the proof, we must show that if
$(n,f)=(n_{0,k},0)$, with $n_{0,k}=0$, is not a bifurcation point,
for any $k \geq 0$, then the following condition,  providing us
with the bifurcation from the branch of trivial solutions at the
value of the parameter $n_{0,k}=0$, must not be satisfied
\begin{equation*}
    \tex{ \mathrm{span\,}\{\mc{L}_{1,k}\hat{\psi}_1,
    \cdots,\mc{L}_{1,k}\hat{\psi}_{M_k}\} \oplus
    R\big({\bf B} + \frac{k}{4}\, I\big) = L_{\rho}^2 (\ren),}
\end{equation*}
where $\mc{L}_{1,k}:=\frac{\mathrm d}{{\mathrm d} n}
\mc{L}(n_{0,k})$, with $n_{0,k}=0$, and
$\{\hat{\psi}_1,\cdots,\hat{\psi}_{M_k}\}$ a basis of the subspace
$\ker\big({\bf B} + \frac{k}{4}\, I\big)$ such that
\begin{equation*}
    \tex{\psi_k= c_1 \hat{\psi}_1 +\cdots + c_{M_k} \hat{\psi}_{M_k},}
\end{equation*}
with the ``normalizing" constraint \eqref{br15}.
Thus, we
suppose that
\begin{equation*}
 \tex{
    \mc{L}_{1,k}\psi_k = \sum_{j=1}^{M_k} c_j \mc{L}_{1,k}\hat{\psi}_j =
    (- \frac{N}{16}+\frac{\l_k}{4}) y \cdot \sum_{j=1}^{M_k} c_j \nabla \hat{\psi}_j
    - \frac{N^2}{16} \sum_{j=1}^{M_k} c_j \hat{\psi}_j \in
    R[\mc{L}_{0,k}], \,\,\, \mbox{so}
    }
\end{equation*}
\begin{equation}
\label{bf55} \begin{split}
 \tex{
    \sum_{j=1}^{M_k} c_j \mc{L}_{1,k}\hat{\psi}_j
    }
    &
    \tex{
    (= - \frac{N}{16}+\frac{\l_k}{4}) y \cdot \sum_{j=1}^{M_k} c_j \nabla \hat{\psi}_j
    - \frac{N^2}{16} \sum_{j=1}^{M_k} c_j \hat{\psi}_j
    }
    \\ &
    \tex{
    =
    -\D^2 v +\frac{1}{4} \,y \cdot \nabla v  + \frac{N}{4} v,
    }
    \end{split}
\end{equation}
for any $v \in R[\mc{L}_{0,k}]$. Now, we restrict ourselves to the
case when $k=1$ and $M_k=2$ (i.e., $N=2$) to avoid excessive
calculations. Hence, multiplying \eqref{bf55} by the associated
adjoint eigenfunctions $\hat{\psi}_1^*$ and $\hat{\psi}_2^*$ and
integrating by parts, we obtain the following system:
\begin{equation*}
    \tex{ \left\{\begin{array}{l} \frac{N+1}{16} \int
    \hat{\psi}_1^* y \cdot (c_1 \nabla \hat{\psi}_1+
    c_2 \nabla \hat{\psi}_2) + \frac{N^2}{16}  \int \hat{\psi}_1^*
    (c_1 \hat{\psi}_1+
    c_2 \hat{\psi}_2) =0,\ssk \\
    \frac{N+1}{16} \int
    \hat{\psi}_2^* y \cdot (c_1 \nabla \hat{\psi}_1+
    c_2 \nabla \hat{\psi}_2) + \frac{N^2}{16}  \int \hat{\psi}_2^*
    (c_1 \hat{\psi}_1+
    c_2 \hat{\psi}_2) =0,\end{array}\right.}
\end{equation*}
and, hence,
\begin{equation}
\label{bf56}
    \tex{ \left\{\begin{array}{l} c_1 \left[ \frac{N+1}{16} \int
    \hat{\psi}_1^* y \cdot \nabla \hat{\psi}_1 +
    \frac{N^2}{16}  \int \hat{\psi}_1^* \hat{\psi}_1\right] +
    c_2 \left[\frac{N+1}{16} \int
    \hat{\psi}_1^* y \cdot \nabla \hat{\psi}_2)
    + \frac{N^2}{16}  \int \hat{\psi}_1^* \hat{\psi}_2\right] =0, \ssk \\
    c_1\left[ \frac{N+1}{16} \int
    \hat{\psi}_2^* y \cdot \nabla \hat{\psi}_1+
    \frac{N^2}{16}  \int \hat{\psi}_2^* \hat{\psi}_1\right] +
    c_2 \left[\frac{N+1}{16} \int
    \hat{\psi}_2^* y \cdot \nabla \hat{\psi}_2 +
    \frac{N^2}{16}  \int \hat{\psi}_2^*\hat{\psi}_2\right] =0.\end{array}\right.}
\end{equation}
Consequently, if the determinant of the system \eqref{bf56} for
the unknowns $c_1$ and $c_2$ is different from zero,
\begin{equation*}
    \tex{ \left|\begin{array}{ll} (N+1) \int
    \hat{\psi}_1^* y \cdot \nabla \hat{\psi}_1 +
    N^2  \int \hat{\psi}_1^* \hat{\psi}_1 &
    (N+1) \int  \hat{\psi}_1^* y \cdot \nabla \hat{\psi}_2)
    + N^2  \int \hat{\psi}_1^* \hat{\psi}_2 \\
    (N+1) \int  \hat{\psi}_2^* y \cdot \nabla \hat{\psi}_1+
    N^2  \int \hat{\psi}_2^* \hat{\psi}_1 &
    (N+1) \int \hat{\psi}_2^* y \cdot \nabla \hat{\psi}_2 +
    N^2  \int \hat{\psi}_2^*\hat{\psi}_2\end{array}\right| \neq
    0},
\end{equation*}
we arrive at the desired result with the normalizing constraint
\eqref{br15}. Similar computations can be done for any finite $k$
and $M_k$; see below.
\end{proof}

\vspace{0.2cm}

Furthermore, as was performed for the case with simple
eigenvalues, we ascertain the conditions that provide us with how
the branching from the eigenfunctions at $n=0$ is and how many
branches we actually have. Hence, by Theorem\;\ref{Th bf3}, for
any $k\geq 1$, the local curve of solutions
\begin{equation*}
    \tex{ (n_k(s),f_k(s)), \quad f_k(s):=\psi_k +s \Phi_k(s),}
\end{equation*}
emanates from the branch of solutions $(n,f)=(0,\psi_k)$, for any $k\geq 1$. That curve of
solutions is defined locally by two maps of class $\mc{C}^{r-1}$,
\begin{equation*}
     \tex{ n_k\,:\, (-\e,\e) \to \re, \quad
     \Phi_k\,:\, (-\e,\e) \to Y_k,}
\end{equation*}
such that, for $s=0$,
\begin{equation*}
    \tex{ n_k(0)= 0, \quad \Phi_k(0)= 0,}
\end{equation*}
and the eigenfunction $\psi_k$ of the subspace $\ker\big({\bf B} +
\frac{k}{4}\, I\big)$, as well as the expansion of the parameter
$\a(n)$ depending on $n$.

Moreover, since the eigenvalues
associated with the eigenfunctions $\psi_k$ are semisimple, the
dimension of the kernel $M_k$ will be greater than 1.
 Thus,
the component emanating at $n=0$ will do it in the direction of
the orthogonal manifold to the one generated by
$\{\hat{\psi}_1,\cdots,\hat{\psi}_{M_k}\}$, in such a way that
$\psi_k = c_1 \hat{\psi}_1+\cdots+c_{M_k}\hat{\psi}_{M_k}$. In
other words, depending on the coefficients $c_1,\cdots,c_{M_k}$,
we shall obtain different directions of the bifurcation-branching.
Then, those functions $(n_k(s),f_k(s))$ admit the following
expansions of the form: as $s \to 0$,
\begin{equation*}
     \tex{ \begin{array}{l}
     n_k(s):=s \g_{k,1}+s^2 \g_{k,2}+o(s^2),\ssk\ssk\\
     f_k(s):=\psi_k+ s \Phi_{k,1} +s^2 \Phi_{k,2}+ o(s^2),
     \end{array} 
     }
\end{equation*}
for certain real numbers $\g_{k,l}$ and some functions
$\Phi_{k,l}\in Y_K$, with $l=1,2$, and $k\geq 1$. Furthermore, we
set $\a_k(s)= \frac{N+k}{4} + s \eta_{k,1} + s^2 \eta_{k,2}+
o(s^2)$, where $\eta_{k,i} \in \re$ for any $i=1,2,\cdots$ and any
$k\geq 0$.
\par
Hence, substituting those expansions into the equation \eqref{sf5}
and dividing by $s$ gives
\begin{align*}
    \tex{ -\nabla \cdot( } &
    \tex{  \left|\psi_k+ s  \Phi_{k,1}\right|^{(s \g_{k,1}+o(s))}
    \nabla \D \psi_k) -s \nabla \cdot(\left| \psi_k+ s
    \Phi_{k,1}\right|^{(s \g_{k,1}+o(s))}
    \nabla \D  \Phi_{k,1}) }
    \\ &
    \tex{ -s^2
    \nabla \cdot(\left|s \psi_k+ s^2 \Phi_{k,1}\right|^{(s \g_{k,1}+o(s))}
    \nabla \D  \Phi_{k,2})
    + \frac{1}{4}\, y \cdot\nabla (\psi_k+ s \Phi_{k,1}+ o(s))}
    \\ &
    \tex{ -s \frac{\left(\frac{N+k}{4} + s \eta_{k,1} + s^2 \eta_{k,2}+ o(s^2)\right)
     \g_{k,1}}{4}\, y \cdot \nabla   (\psi_k+ s \Phi_{k,1}+ o(s)) }
    \\ &
    \tex{ +\left(\frac{N+k}{4} + s \eta_{k,1} + s^2 \eta_{k,2}+ o(s^2)\right)
    \big(\psi_k+ s \Phi_{k,1} + o(s)\big)  =0\,.}
\end{align*}
Then, passing to the limit as $s \rightarrow 0$ in a similar way
as was done for the case of simple eigenvalues (assuming a
``sufficient transversality"  of a.a. zeros of the eigenfunctions
$\psi_k$ and, hence, the expansion $|\psi_k|^{s \g_{k,1}}=1+ s
\g_{k,1} \ln |\psi_k| + o(s)$), we have that
\begin{equation*}
 \tex{
    \big({\bf B}+\frac{k}{4}\, I\big) \psi_k \equiv
    \big(-\D^2+\frac{1}{4}\, y \cdot \nabla +\frac{N+k}{4}\, I
    \big)\psi_k=0\,,
    }
\end{equation*}
which is true by Section \ref{S3}. Dividing the rest by $s$ and
letting $s \rightarrow 0$ give
\begin{equation}
\label{bf57}
 \tex{
    \big({\bf B}+\frac{k}{4} \, I\big)  \Phi_{k,1}= \g_{k,1} \left[\frac{N+k}{16}\, y
    \cdot \nabla  \psi_k+ \nabla \cdot (\ln |\psi_k| \nabla \Delta \psi_k)\right] +
    \eta_{k,1} \psi_k\,,
     }
\end{equation}
 where  $\psi_k =
c_1 \hat{\psi}_1+\cdots+c_{M_k}\hat{\psi}_{M_k}$. Hence,
multiplying by the associated adjoint eigenfunctions
$\{\hat{\psi}_1^*,\cdots,\hat{\psi}_{M_k}^*\}$, integrating over
$\ren$ and applying the Fredholm alternative \cite{Deim},  we
obtain an algebraic system with the coefficients $\{c_j, \,j=
1,\cdots,M_k\}$, $\g_{k,1}$, and $\eta_{k,1}$ as the unknowns.
Again, to avoid excessive calculations, we will restrict ourselves
to the simplest case in which $k=1$ and $M_1=2$ ($N=2$). Then  we
arrive at the following algebraic system:
\begin{equation}
\label{bf58}
    \left\{\begin{array}{l}
    \tex{
    c_1 \big[\frac{\g_{1,1}(N+k)}{16}\,\int \hat{\psi}_1^*\, y \cdot \nabla  \hat{\psi}_1+ \g_{1,1}
    \,\int \hat{\psi}_1^*\, \nabla \cdot (\ln |c_1\hat{\psi}_1+c_2 \hat{\psi}_2| \nabla \Delta
    \hat{\psi}_1) }
     \ssk \\
 \tex{
      +
    \eta_{1,1} \,\int \hat{\psi}_1^*\, \hat{\psi}_1 \big]
    }
    \tex{+ c_2[\frac{\g_{1,1}(N+k)}{16}\,\int \hat{\psi}_1^*\, y \cdot \nabla  \hat{\psi}_2
    }
 \ssk \\
  \tex{
    + \g_{1,1}
    \,\int \hat{\psi}_1^*\, \nabla \cdot (\ln |c_1\hat{\psi}_1+c_2 \hat{\psi}_2|
    \nabla \Delta \hat{\psi}_2) +
    \eta_{1,1} \,\int \hat{\psi}_1^*\, \hat{\psi}_2]=0,
    }
    \ssk \\
    \tex{ c_1\big[\frac{\g_{1,1}(N+k)}{16}\,\int \hat{\psi}_2^*\, y \cdot \nabla  \hat{\psi}_1+ \g_{1,1}
    \,\int \hat{\psi}_2^*\, \nabla \cdot (\ln |c_1\hat{\psi}_1+c_2 \hat{\psi}_2| \nabla \Delta
     \hat{\psi}_1)
      }
 \ssk\\
 \tex{
      +
    \eta_{1,1} \,\int \hat{\psi}_2^*\, \hat{\psi}_1\big]}
    \tex{+ c_2[\frac{\g_{1,1}(N+k)}{16}\,\int \hat{\psi}_2^*\, y \cdot \nabla  \hat{\psi}_2
    }
 \ssk \\
  \tex{
    + \g_{1,1}
    \,\int \hat{\psi}_2^*\, \nabla \cdot (\ln |c_1\hat{\psi}_1+c_2 \hat{\psi}_2|
     \nabla \Delta \hat{\psi}_2) +
    \eta_{1,1} \,\int \hat{\psi}_2^*\, \hat{\psi}_2]=0,}
     \ssk \\
  \tex{ c_1+c_2=1.}\end{array}\right.
\end{equation}
We will achieve the existence of solutions due to standard fixed
point theory arguments (see \cite{PV} for further details). Then,
in order  to ascertain how many possible solutions we might
have, one can fix the value of $\eta_{k,1}$ and solve the
nonlinear algebraic system \eqref{bf58} for the remaining unknowns.
Thus, from the third equation of \eqref{bf58} we find that
$c_1=1-c_2$. Then, setting $c_1 \hat{\psi}_1 +c_2 \hat{\psi}_2 =
\hat{\psi}_1 +c_2 (\hat{\psi}_2-\hat{\psi}_1)$, substituting it in
the other two equations, and integrating by parts in some of the
terms (the ones with the logarithm), we obtain
\begin{equation*}
    \tex{ \begin{array}{l}
    \g_{1,1} \big[\frac{(N+k)}{16}\,\int \hat{\psi}_1^*\, y \cdot \nabla  \hat{\psi}_1+
    \,\int \nabla \hat{\psi}_1^*\, h_1 \big]+
    \eta_{1,1} \,\int \hat{\psi}_1^*\, \hat{\psi}_1
    \ssk \\
    \tex{ \,\,\,\,\,\,\, +c_2 \big[\frac{\g_{1,1}(N+k)}{16}\,\int \hat{\psi}_1^*\,
     y \cdot (\nabla  \hat{\psi}_2-\nabla  \hat{\psi}_1)+ \g_{1,1}
    \,\int \nabla \hat{\psi}_1^*\, (h_2-h_1) +
    \eta_{1,1} \,\int \hat{\psi}_1^*\, (\hat{\psi}_2-\hat{\psi}_1)\big]=0,}
    \ssk \\
    \tex{ \g_{1,1}\big[\frac{(N+k)}{16}\,\int \hat{\psi}_2^*\, y \cdot \nabla  \hat{\psi}_1+
    \,\int \nabla \hat{\psi}_2^*\, h_1 \big]+
    \eta_{1,1} \,\int \hat{\psi}_2^*\, \hat{\psi}_1}
    \ssk \\
    \tex{ \,\,\,\,\,\,\, +c_2 \big[\frac{\g_{1,1}(N+k)}{16}\,\int \hat{\psi}_2^*\, y \cdot (\nabla  \hat{\psi}_2-\nabla  \hat{\psi}_1)+ \g_{1,1}
    \,\int \nabla \hat{\psi}_2^*\, (h_2-h_1) +
    \eta_{1,1} \,\int \hat{\psi}_2^*\, (\hat{\psi}_2-\hat{\psi}_1) \big]=0,}
    \end{array}}
\end{equation*}
\begin{equation*}
 \mbox{where} \quad
    \tex{ h_1\,:=\, \ln |\hat{\psi}_1+c_2 (\hat{\psi}_2-\hat{\psi}_1)| \nabla \Delta \hat{\psi}_1\,, \quad
    h_2\,:=\, \ln |\hat{\psi}_1+c_2 (\hat{\psi}_2-\hat{\psi}_1)| \nabla \Delta \hat{\psi}_2\,.}
\end{equation*}
 Hence, we next solve the system without considering the extra perturbation
terms, which $h_1$ and $h_2$ are involved in, i.e.,
\begin{equation*}
    \tex{ \o_i(\g_{1,1},c_2) := \g_{1,1} \,\int \nabla \hat{\psi}_i^*\, h_1 + c_2 \g_{1,1}
    \,\int \nabla \hat{\psi}_i^*\, (h_2-h_1)\,, \quad \hbox{with} \quad i=1,2.}
\end{equation*}
Hence, we need to solve the system,
\begin{equation}
\label{sysgo}
    \tex{ \begin{array}{l}
    c_2\big[\frac{\g_{1,1}(N+k)}{16}\,\int \hat{\psi}_1^*\, y \cdot (\nabla  \hat{\psi}_2-\nabla  \hat{\psi}_1)
    +
    \eta_{1,1} \,\int \hat{\psi}_1^*\, (\hat{\psi}_2-\hat{\psi}_1)\big]
    \ssk \\
    \tex{ \qquad \qquad +\g_{1,1}\frac{(N+k)}{16}\,\int \hat{\psi}_1^*\, y \cdot \nabla  \hat{\psi}_1+
    \eta_{1,1} \,\int \hat{\psi}_1^*\, \hat{\psi}_1=0 ,}
    \ssk \\
    \tex{ c_2\big[\frac{\g_{1,1}(N+k)}{16}\,\int \hat{\psi}_2^*\, y \cdot (\nabla  \hat{\psi}_2-\nabla  \hat{\psi}_1) +
    \eta_{1,1} \,\int \hat{\psi}_2^*\, (\hat{\psi}_2-\hat{\psi}_1)\big]
    }
    \ssk \\
    \tex{ \qquad \qquad +\g_{1,1}\frac{(N+k)}{16}\,\int \hat{\psi}_2^*\, y \cdot \nabla  \hat{\psi}_1+
    \eta_{1,1} \,\int \hat{\psi}_2^*\, \hat{\psi}_1 =0.}
    \end{array}}
\end{equation}
At this point it is quite easy to prove that, for example, after
substituting the expression for $c_2$, obtained from the first
equation, into the second equation, we arrive at a quadratic form
which depends on the unknown $\g_{1,1}$,
\begin{equation*}
 \tex{ \mf{F}(\g_{1,1}):= A_1 \g_{1,1}^2 +B_1 \g_{1,1} +C_1 =0,}
\end{equation*}
with at most two possible solutions. Note we are assuming
that at least one  of the following conditions is fulfilled:
\begin{equation}
\label{bf60}
    \tex{\frac{\g_{1,1}(N+k)}{16}\,\int \hat{\psi}_i^*\, y \cdot (\nabla  \hat{\psi}_2-\nabla  \hat{\psi}_1)
    + \eta_{1,1} \,\int \hat{\psi}_i^*\, (\hat{\psi}_2-\hat{\psi}_1)\neq 0\, \quad \hbox{with} \quad i=1,2. }
\end{equation}
Those solutions will correspond to two possible values of $c_2$
which are the roots of the following quadratic form:
\begin{equation*}
 \tex{ \mf{G}(c_2):= A_2 c_2^2 +B_2 c_2 +C_2 =0.}
\end{equation*}
Moreover, owing to the ``normalizing" constraint \eqref{br15}, we
have that $c_2 \in [0,1]$. Hence, for that quadratic form the
following is ascertained:
\begin{enumerate}
\item[(i)] $c_2 = 0 \Rightarrow \mf{G}(0)= C_2$;
\item[(ii)] $c_2 = 1 \Rightarrow \mf{G}(1) = A_2+B_2+C_2$;
\item[(iii)] Differentiating $\mf{G}$ with respect to $c_2$, we obtain that
$\mf{G}'(c_2)= 2 c_2 A_2+B_2$. Then, the critical point of the function $\mf{G}$ is
$c_2^* = -\frac{B_2}{2A_2}$ and $\mf{G}(c_2^*)= -\frac{B_2}{4A_2}+C_2$.
\end{enumerate}
Once the solutions for $c_2$ are established, that we know they are
between 0 and 1 according to the ``normalizing" constraint, we are
able to ascertain the solutions for $\g_{2,1}$. Although, they can
reach any value in the real line, and not only values between 0
and 1, they must fulfill the quadratic form $\mf{F}$ as well, so
that, we will have at most two solutions corresponding to this
unknown $\g_{2,1}$. Therefore, going back to \eqref{bf60} and
supposing it is true,  we shall obtain two solutions after
imposing the following conditions:
\begin{enumerate}
\item[(a)] $C  (A_2+B_2+C_2) >0$;
\item[(b)] $C \big(-\frac{B_2}{4A_2}+C_2\big)<0 $; and
\item[(c)] $0<-\frac{B_2}{2A_2}<1$.
\end{enumerate}
On the other hand, if $-\frac{B_2}{4A_2}+C_2=0$ then we have just
one solution of the quadratic form.

However, in the  case when condition \eqref{bf60} is not
fulfilled, we obtain a single unique solution for $\g_{2,1}$,
which satisfies the following equality:
\begin{equation*}
    \tex{ \g_{1,1}=- \frac{\eta_{1,1} \,\int \hat{\psi}_1^*\, \hat{\psi}_1}
    {\frac{(N+k)}{16}\,\int \hat{\psi}_1^*\, y \cdot \nabla  \hat{\psi}_1}=
       - \frac{\eta_{1,1} \,\int \hat{\psi}_2^*\, \hat{\psi}_1}
        {\frac{(N+k)}{16}\,\int \hat{\psi}_2^*\, y \cdot \nabla  \hat{\psi}_1}\,.}
\end{equation*}
Unfortunately, in this case nothing can be said about the number
of solutions for $c_2$ and, hence,  for the other unknowns
appearing in the system \eqref{bf58}, unless just one of them is
satisfied, in which case we ascertain one unique solution for
all the unknowns. Observe that, for any solution pair
$(\g_{1,1},c_2)$, there  corresponds a value of $\eta_{1,1}$, that
we fixed above.

Therefore, we will obtain at most two solutions of the system
\eqref{sysgo}, and, eventually, imposing some conditions on the
extra nonlinear terms $\o_i(\g_{1,1},c_2)$ such as
\begin{equation*}
    \left\| \o_i (\g_{1,1},c_2) \right\|_{L^\infty} \leq \min \{\mf{F}(\g_{1,1}^*),\mf{G}(c_2^*)\}, \quad \hbox{for any} \quad i=1,2,
\end{equation*}
where, $\g_{1,1}^*,c_2^*$ are the values where the quadratic forms catch the critical points, we finally
obtain at most two solutions for the original nonlinear algebraic system \eqref{bf58}.

\vspace{0.4cm}

For the sake of completion, we extend these results to the
case in which $k=2$ and the dimension of the kernel is $M_2=3$
(again, $N=2$). In other words, the kernel will be generated by
$\{\hat{\psi}_1^*,\hat{\psi}_2^*,\hat{\psi}_{3}^*\}$ such that
$\psi_2=c_1 \hat{\psi}_1^*+c_2\hat{\psi}_2^*+c_3\hat{\psi}_{3}^*$.
Hence, for this particular case, multiplying again \eqref{bf57} by
the adjoint eigenfunctions $\hat{\psi}_i^*$, with $i=1,2,3$, and
imposing the ``normalizing" constraint \eqref{br15}, the following
algebraic system is obtained:
\begin{equation}
\label{bf62}
   \begin{array}{l}
    c_1\big[\frac{\g_{2,1}(N+k)}{16}\,\int \hat{\psi}_1^*\, y \cdot \nabla  \hat{\psi}_1+ \g_{2,1}
    \,\int \hat{\psi}_1^*\, \nabla \cdot (\ln |c_1\hat{\psi}_1+c_2 \hat{\psi}_2
    +c_3 \hat{\psi}_3| \nabla \Delta \hat{\psi}_1)
 \ssk\\
 \tex{
     +
    \eta_{2,1} \,\int \hat{\psi}_1^*\, \hat{\psi}_1\big]}
    \tex{
    + c_2\big[\frac{\g_{2,1}(N+k)}{16}\,\int \hat{\psi}_1^*\, y \cdot \nabla
     \hat{\psi}_2
      }
 \ssk\\
    \tex{ + \g_{2,1}
    \,\int \hat{\psi}_1^*\, \nabla \cdot (\ln |c_1\hat{\psi}_1+c_2 \hat{\psi}_2
    +c_3 \hat{\psi}_3| \nabla \Delta \hat{\psi}_2) +
    \eta_{2,1} \,\int \hat{\psi}_1^*\, \hat{\psi}_2\big]}
    \ssk \\
    \tex{+ c_3\big[\frac{\g_{2,1}(N+k)}{16}\,\int \hat{\psi}_1^*\, y \cdot \nabla  \hat{\psi}_3
    }
 \ssk\\
 \tex{
    + \g_{2,1}
    \,\int \hat{\psi}_1^*\, \nabla \cdot (\ln |c_1\hat{\psi}_1+c_2 \hat{\psi}_2+c_3 \hat{\psi}_3| \nabla \Delta \hat{\psi}_3) +
    \eta_{2,1} \,\int \hat{\psi}_1^*\, \hat{\psi}_3\big]=0,}
    \ssk\ssk \\
    \tex{ c_1\big[\frac{\g_{2,1}(N+k)}{16}\,\int \hat{\psi}_2^*\, y \cdot \nabla  \hat{\psi}_1+
     \g_{2,1}
    \,\int \hat{\psi}_2^*\, \nabla \cdot (\ln |c_1\hat{\psi}_1+c_2 \hat{\psi}_2+c_3 \hat{\psi}_3|
     \nabla \Delta \hat{\psi}_1)}
 \ssk\\
      +
    \eta_{2,1} \,\int \hat{\psi}_2^*\, \hat{\psi}_1\big]
    \tex{+ c_2\big[\frac{\g_{2,1}(N+k)}{16}\,\int \hat{\psi}_2^*\, y \cdot \nabla  \hat{\psi}_2
    }
 \ssk\\
    + \g_{2,1}
    \,\int \hat{\psi}_2^*\, \nabla \cdot (\ln |c_1\hat{\psi}_1+c_2 \hat{\psi}_2+c_3 \hat{\psi}_3|
    \nabla \Delta \hat{\psi}_2) +
    \eta_{2,1} \,\int \hat{\psi}_2^*\, \hat{\psi}_2\big]
    \ssk \\
    \tex{+ c_3\big[\frac{\g_{2,1}(N+k)}{16}\,\int \hat{\psi}_2^*\,
     y \cdot \nabla  \hat{\psi}_3
     }
 \ssk\\
     + \g_{2,1}
    \,\int \hat{\psi}_2^*\, \nabla \cdot (\ln |c_1\hat{\psi}_1+c_2 \hat{\psi}_2+c_3
    \hat{\psi}_3| \nabla \Delta \hat{\psi}_3) +
    \eta_{2,1} \,\int \hat{\psi}_2^*\, \hat{\psi}_3\big]=0,
    \ssk\ssk \\
    \tex{ c_1\big[\frac{\g_{2,1}(N+k)}{16}\,\int \hat{\psi}_3^*\, y \cdot \nabla  \hat{\psi}_1
    + \g_{2,1}
    \,\int \hat{\psi}_3^*\, \nabla \cdot (\ln |c_1\hat{\psi}_1+c_2 \hat{\psi}_2+c_3
     \hat{\psi}_3| \nabla \Delta \hat{\psi}_1)
     }
     \ssk\\
      +
    \eta_{2,1} \,\int \hat{\psi}_3^*\, \hat{\psi}_1\big]
    \tex{+ c_2\big[\frac{\g_{2,1}(N+k)}{16}\,\int \hat{\psi}_3^*\, y \cdot \nabla  \hat{\psi}_2
    }
    \ssk\\
    + \g_{2,1}
    \,\int \hat{\psi}_3^*\, \nabla \cdot (\ln |c_1\hat{\psi}_1+c_2 \hat{\psi}_2
    +c_3 \hat{\psi}_3| \nabla \Delta \hat{\psi}_2) +
    \eta_{2,1} \,\int \hat{\psi}_3^*\, \hat{\psi}_2\big]
    \ssk \\
    \tex{+ c_3\big[\frac{\g_{2,1}(N+k)}{16}\,\int \hat{\psi}_3^*\,
     y \cdot \nabla  \hat{\psi}_3
     }
     \ssk\\
     + \g_{2,1}
    \,\int \hat{\psi}_3^*\, \nabla \cdot (\ln |c_1\hat{\psi}_1+c_2
    \hat{\psi}_2+c_3 \hat{\psi}_3| \nabla \Delta \hat{\psi}_3) +
    \eta_{2,1} \,\int \hat{\psi}_3^*\, \hat{\psi}_3\big]=0,
    \ssk\ssk \\
  \tex{ c_1+c_2+c_3=1.}\end{array}
\end{equation}
As mentioned above, for the case $k=1$, the existence of
non-degenerate solutions is guaranteed by standard fixed point
theory. Moreover, since by the third equation $c_1=1-c_2-c_3$,
substituting it into the other three equations of the system
\eqref{bf62} yields
\begin{equation}
\label{beast}
    \tex{ \begin{array}{l}
    \g_{2,1}\big[\frac{(N+k)}{16}\,\int \hat{\psi}_1^*\, y \cdot \nabla  \hat{\psi}_1+
    \,\int \nabla \hat{\psi}_1^*\, h_1\big]+
    \eta_{2,1} \,\int \hat{\psi}_1^*\, \hat{\psi}_1
    \ssk \\
    \tex{ \,\,\,\,\,\,\, +c_2\big[\frac{\g_{2,1}(N+k)}{16}\,\int \hat{\psi}_1^*\,
     y \cdot (\nabla  \hat{\psi}_2-\nabla  \hat{\psi}_1)+ \g_{2,1}
    \,\int \nabla \hat{\psi}_1^*\, (h_2-h_1)
 }
 \ssk\\
     +
    \eta_{2,1} \,\int \hat{\psi}_1^*\, (\hat{\psi}_2-\hat{\psi}_1)\big]
    \tex{+c_3\big[\frac{\g_{2,1}(N+k)}{16}\,\int \hat{\psi}_1^*\, y \cdot
     (\nabla  \hat{\psi}_3-\nabla  \hat{\psi}_1)
     }
     \ssk\\
     + \g_{2,1}
    \,\int \nabla \hat{\psi}_1^*\, (h_3-h_1) +
    \eta_{2,1} \,\int \hat{\psi}_1^*\, (\hat{\psi}_3-\hat{\psi}_1)\big]=0,
    \ssk \\
    \tex{ \g_{2,1}\big[\frac{(N+k)}{16}\,\int \hat{\psi}_2^*\, y \cdot \nabla  \hat{\psi}_1+
    \,\int \nabla \hat{\psi}_2^*\, h_1\big]+
    \eta_{2,1} \,\int \hat{\psi}_2^*\, \hat{\psi}_1}
    \ssk \\
    \tex{ \,\,\,\,\,\,\, +c_2\big[\frac{\g_{2,1}(N+k)}{16}\,\int \hat{\psi}_2^*\,
    y \cdot (\nabla  \hat{\psi}_2-\nabla  \hat{\psi}_1)+ \g_{2,1}
    \,\int \nabla \hat{\psi}_2^*\, (h_2-h_1)
    }
    \ssk\\
     +
    \eta_{2,1} \,\int \hat{\psi}_2^*\, (\hat{\psi}_2-\hat{\psi}_1)\big]
    \tex{+c_3\big[\frac{\g_{2,1}(N+k)}{16}\,\int \hat{\psi}_2^*\, y \cdot (\nabla  \hat{\psi}_3-
    \nabla  \hat{\psi}_1)
    }
    \ssk\\
    + \g_{2,1}
    \,\int \nabla \hat{\psi}_2^*\, (h_3-h_1) +
    \eta_{2,1} \,\int \hat{\psi}_2^*\, (\hat{\psi}_3-\hat{\psi}_1)\big]=0,
    \ssk \\
    \tex{ \g_{2,1}\big[\frac{(N+k)}{16}\,\int \hat{\psi}_3^*\, y \cdot \nabla  \hat{\psi}_1+
    \,\int \nabla \hat{\psi}_2^*\, h_1\big]+
    \eta_{2,1} \,\int \hat{\psi}_3^*\, \hat{\psi}_1}
    \ssk \\
    \tex{ \,\,\,\,\,\,\, +c_2\big[\frac{\g_{2,1}(N+k)}{16}\,\int \hat{\psi}_3^*\,
     y \cdot (\nabla  \hat{\psi}_2-\nabla  \hat{\psi}_1)+ \g_{2,1}
    \,\int \nabla \hat{\psi}_2^*\, (h_2-h_1)
    }
    \ssk\\
     +
    \eta_{2,1} \,\int \hat{\psi}_3^*\, (\hat{\psi}_2-\hat{\psi}_1)\big]
    \tex{ \,\,\,\,\,\,\, +c_3\big[\frac{\g_{2,1}(N+k)}{16}\,\int \hat{\psi}_3^*\, y
    \cdot (\nabla  \hat{\psi}_3-\nabla  \hat{\psi}_1)
    }
    \ssk\\
    + \g_{2,1}
    \,\int \nabla \hat{\psi}_2^*\, (h_3-h_1) +
    \eta_{2,1} \,\int \hat{\psi}_3^*\, (\hat{\psi}_3-\hat{\psi}_1)\big]=0,
    \end{array}}
\end{equation}
where,
\begin{align*}
    & \tex{ h_1\,:=\, \ln |\hat{\psi}_1+c_2 (\hat{\psi}_2-\hat{\psi}_1)+c_3(\hat{\psi}_3-\hat{\psi}_1)| \nabla \Delta \hat{\psi}_1\,,}
    \\ & \tex{ h_2\,:=\, \ln |\hat{\psi}_1+c_2 (\hat{\psi}_2-\hat{\psi}_1)+c_3(\hat{\psi}_3-\hat{\psi}_1)| \nabla \Delta \hat{\psi}_2\,,}
    \\ & \tex{h_3\,:=\, \ln |\hat{\psi}_1+c_2 (\hat{\psi}_2-\hat{\psi}_1)+c_3(\hat{\psi}_3-\hat{\psi}_1)| \nabla \Delta \hat{\psi}_3\,.}
\end{align*}
Subsequently,  as previously done for the particular case  $k=1$,
we solve the nonlinear algebraic system \eqref{beast} without
including   complicated nonlinear perturbations,  which the terms
$h_1$, $h_2$, and $h_3$ are involved in:
\begin{equation*}
    \tex{ \o_i(\g_{2,1},c_2,c_3) := \g_{2,1} \,\int \nabla \hat{\psi}_i^*\, h_1 + c_2 \g_{2,1}
    \,\int \nabla \hat{\psi}_i^*\, (h_2-h_1)+ c_3 \g_{2,1}
    \,\int \nabla \hat{\psi}_i^*\, (h_3-h_1)\,,}
\end{equation*}
with $i=1,2$.
Thus, ascertaining the number of possible solutions for the system
\begin{equation*}
    \tex{ \begin{array}{l}
    \g_{2,1}\frac{(N+k)}{16}\,\int \hat{\psi}_1^*\, y \cdot \nabla  \hat{\psi}_1+
    \eta_{2,1} \,\int \hat{\psi}_1^*\, \hat{\psi}_1
    \ssk \\
    \tex{ \,\,\,\,\,\,\, +c_2\big[\frac{\g_{2,1}(N+k)}{16}\,\int \hat{\psi}_1^*\, y \cdot (\nabla  \hat{\psi}_2-\nabla  \hat{\psi}_1) +
    \eta_{2,1} \,\int \hat{\psi}_1^*\, (\hat{\psi}_2-\hat{\psi}_1)\big]}
    \ssk \\
    \tex{ \,\,\,\,\,\,\, +c_3\big[\frac{\g_{2,1}(N+k)}{16}\,\int \hat{\psi}_1^*\, y \cdot (\nabla  \hat{\psi}_3-\nabla  \hat{\psi}_1) +
    \eta_{2,1} \,\int \hat{\psi}_1^*\, (\hat{\psi}_3-\hat{\psi}_1)\big]=0,}
    \ssk \\
    \tex{ \g_{2,1}\frac{(N+k)}{16}\,\int \hat{\psi}_2^*\, y \cdot \nabla  \hat{\psi}_1+
    \eta_{2,1} \,\int \hat{\psi}_2^*\, \hat{\psi}_1}
    \ssk \\
    \tex{ \,\,\,\,\,\,\, +c_2\big[\frac{\g_{2,1}(N+k)}{16}\,\int \hat{\psi}_2^*\, y \cdot (\nabla  \hat{\psi}_2-\nabla  \hat{\psi}_1)+
    \eta_{2,1} \,\int \hat{\psi}_2^*\, (\hat{\psi}_2-\hat{\psi}_1)\big]}
    \ssk \\
    \tex{ \,\,\,\,\,\,\, +c_3\big[\frac{\g_{2,1}(N+k)}{16}\,\int \hat{\psi}_2^*\, y \cdot (\nabla  \hat{\psi}_3-\nabla  \hat{\psi}_1) +
    \eta_{2,1} \,\int \hat{\psi}_2^*\, (\hat{\psi}_3-\hat{\psi}_1)\big]=0,}
    \ssk \\
    \tex{ \g_{2,1}\frac{(N+k)}{16}\,\int \hat{\psi}_3^*\, y \cdot \nabla  \hat{\psi}_1+
    \eta_{2,1} \,\int \hat{\psi}_3^*\, \hat{\psi}_1}
    \ssk \\
    \tex{ \,\,\,\,\,\,\, +c_2\big[\frac{\g_{2,1}(N+k)}{16}\,\int \hat{\psi}_3^*\, y \cdot (\nabla  \hat{\psi}_2-\nabla  \hat{\psi}_1)+
    \eta_{2,1} \,\int \hat{\psi}_3^*\, (\hat{\psi}_2-\hat{\psi}_1)\big]}
    \ssk \\
    \tex{ \,\,\,\,\,\,\, +c_3\big[\frac{\g_{2,1}(N+k)}{16}\,\int \hat{\psi}_3^*\, y \cdot (\nabla  \hat{\psi}_3-\nabla  \hat{\psi}_1)+
    \eta_{2,1} \,\int \hat{\psi}_3^*\, (\hat{\psi}_3-\hat{\psi}_1)\big]=0,}
    \end{array}}
\end{equation*}
and controlling the oscillations of the extra  nonlinear
perturbations $\o_i(\g_{2,1}c_2,c_3)$, for any $i=1,2,3$, as above
for  $k=1$, we achieve the desired results imposing the conditions
\begin{equation*}
    \tex{\frac{\g_{1,1}(N+k)}{16}\,\int \hat{\psi}_i^*\, y \cdot (\nabla  \hat{\psi}_j-\nabla  \hat{\psi}_1)
    + \eta_{1,1} \,\int \hat{\psi}_i^*\, (\hat{\psi}_j-\hat{\psi}_1)\neq 0\,, \; \hbox{with} \;\; i=1,2,3\;\;\hbox{and}\;\; j=2,3. }
\end{equation*}

Our system can be reduced to the study  of two perturbed quadratic
forms. Therefore, we arrive at the problem of studying the number
of intersections of two {\em conic  surfaces}, which provides us
with the number of solutions between zero and four.
 We postpone explaining how this approach works until Section
 \ref{S5.5}, where it is applied to the blow-up nonlinear eigenvalue problem $\ef{eigpm}_-$.
This approach is quite similar for both the cases.


As a preliminary but a key conclusion, it is worth mentioning now
that we believe that, since we are dealing with  a kernel of the
dimension 3, in this  case, we have four solutions. It seems then
that two of them should coincide. This is a principal issue, owing
to the fact that, somehow, the number of solutions depends on the
coefficients we have for the system and, at the same time, on the
eigenfunctions that generate the subspace $\ker\big({\bf
B}+\frac{k}{4}\, I\big)$, so its dimension. However, a full
justification is not proved here and, due to the difficult nature
of the problem, perhaps it will never be possible to justify it
completely.

\subsection{A short discussion on global behaviour of $n$-branches}

After performing a very precise analysis about the
bifurcation-branching analysis in the proximity of $n=0$, we
intend to explain how the global behaviour of the branch of
solutions that emanates from the eigenfunction $\psi_0$ at the
value of the parameter $n=0$ can be. It is well known that the
existence of solutions is not guaranteed for any $n$. Indeed, it
was discussed in \cite{EGK2} that the existence of oscillatory
solutions ends when $n=1.7587...\,$. The value where the existence
of oscillatory solutions ceases obviously depends on the type of
the thin film equation we are dealing with (the pure TFE, with
extra absorption terms, stable, unstable, etc).

Due to the analysis performed in this section, we already know
that there is no bifurcation from the branch of trivial solutions
at  $n=0$ for the TFE $\eqref{eigpm}_+$,
\begin{equation*}
   \tex{
 \BB^+_n (\a,f) \equiv - \n \cdot (|f|^n \n \D f) + \b y \cdot \n f +
 \a f=0 \inB \ren.
 }
 \end{equation*}
Moreover, after some rescaling the spectrum of the linear counterpart of
$\eqref{eigpm}_+$ is directly related with the spectrum of the operator ${\bf B}$ denoted by \eqref{i4}
\begin{equation*}
 \tex{
    {\bf B}F \equiv -\D_y^2 F + \frac{1}{4}\,y \cdot \nabla_y F +\frac{N}{4}\, F=0
    \quad \hbox{in} \quad \re^{N},\quad \int\limits_{\re^{N}} F(y) \, {\mathrm
    d}y=1.
    }
\end{equation*}
\begin{figure}[!htb]
\begin{center}
\includegraphics[width=8cm]{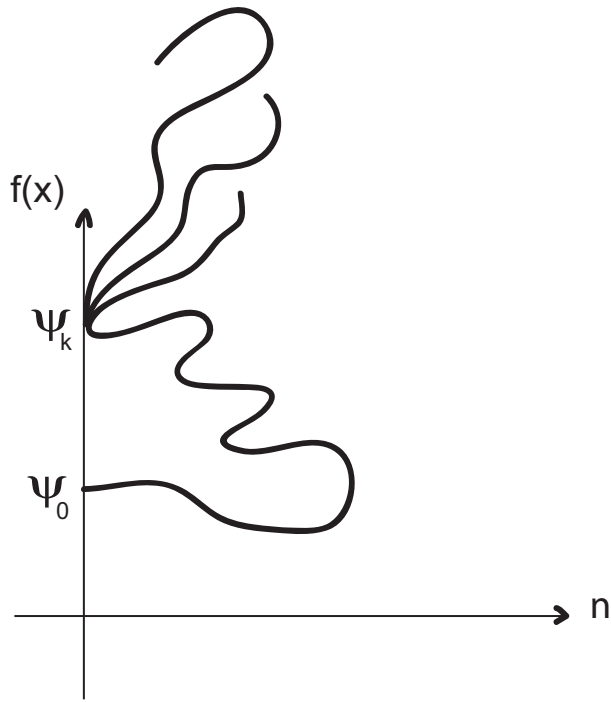}
\caption{A formal global branching $n$-diagram.} \label{fig1}
\end{center}
\end{figure}
Therefore, according to this, we believe that there is no
bifurcation from the branch of trivial solutions at any value of
the parameter $n>0$. However, a rigorous proof of this fact is
difficult, since our nonlinear operators are not monotone and,
hence, the main techniques usually used in the analysis of
second-order operators are not applicable here. However, if that
was the situation, we could obtain the existence of certain
turning points, or even a connection with another branch among the
ones emanating from some other eigenfunction $\psi_k$, with $k
\geq 1$; cf. Figure \ref{fig1}. We plan to carry out this work in a subsequent paper.

\setcounter{equation}{0}
\section{Blow-up similarity profiles for the Cauchy problem via  $n$-branching}
 \label{S5}

\subsection{Preliminaries: homotopy and nodal sets}

 In this section, we  describe the behaviour of the blow-up similarity solutions
 $\ef{upm}_-$ of the TFE-4 \eqref{tfe} through the same homotopic approach
 by setting
$n\downarrow 0$ in $\ef{eigpm}_-$ and, hence, arriving at the
linear adjoint operator \ef{ad55}. Then, we shall use the
eigenfunction patterns occurring for $n=0$ (those are generalized
Hermite polynomials \ef{s16})  as branching points for nonlinear
eigenfunctions providing us with a straightforward and  practical
$n$-continuity approach to the self-similar equation \eqref{sf4}
associated with the TFE-4 \eqref{tfe} from the equation \eqref{i4}
associated with the bi-harmonic equation \eqref{s1}.

It is worth recalling now that homotopic approaches are well-known
in the theory of vector fields and nonlinear operator theory (see
\cite{Deim, KZ} for details).
In our case, a ``homotopic path" just declares the existence of a
continuous connection (a curve) of some nonlinear eigenfunctions
$f=f_k^-(y)$ satisfying \ef{sf4} that ends up at $n=0^+$ at the
linear adjoint polynomials $\psi_k^*(y)$ given in \ef{s16}.
Due to Section\;\ref{S3}, we already know that those profiles
correspond to generalized Hermite polynomials given by
\eqref{s16}, which have {\em finite oscillatory properties}. For
instance, for any even $|\b|$, the polynomials \ef{s16} do not
have any zero nodal surface at all. However, for $k \ge 1$, linear
combinations of such eigenfunctions do have nodal sets of known
and relatively simple structure.

For odd $|\b|$ (or on multi-dimensional eigenspaces), arbitrary
linear combinations of Hermite polynomials for such a fixed
$k=|\b|  \ge 1$ explain  all possible structures of nodal sets and
(see \cite{2mSturm} for a full formulation)
 \be
 \label{cero1}
 \fbox{
 multiple zeros of solutions of the
bi-harmonic equation (\ref{s1}).}
 \ee


Furthermore, it turns out that, using classical branching theory,
 ``nonlinear eigenfunctions" $f_k(y)$ of changing sign, which
satisfies the {\em nonlinear eigenvalue problem} \eqref{sf4} (with
an extra ``radiation-minimal-like" condition at infinity to be
specified shortly),
 at least, for
sufficiently small $n > 0$, can be connected with the adjoint
polynomials $\psi^*_k(y)$ in \ef{s16}, or their linear
combinations from the eigenspace. We are capable of justify this
through the corresponding Lyapunov--Schmidt branching equation,
trying to be as rigorous as possible in supporting and
deriving the critical nonlinear eigenvalues $\a_k(n)$.

\subsection{Towards ``minimal growth at infinity"}

This is about the ``minimal" (a ``radiation-like") condition at
infinity, announced in \ef{bc2}, which makes the equation \ef{sf4}
to be a nonlinear eigenvalue problem. We recall that, for $n=0$,
the ${\bf (NEP)_-}$ in \ef{eigpm} reduces to the linear one for
the operator $\BB^*$ in \ef{ad55}, with the straightforward
correspondence
 \be
 \label{kk1}
  \tex{
  \a_k(0)= \l_k \equiv - \frac k4, \quad k=0,1,2,...\,.
  }
  \ee

Equation \ef{sf4} admits two kinds of asymptotics at infinity. The
first one is nonlinear and is given by the first two operators:
assuming simple radial behaviour ($f=y^{\g}$)  yields
 \be
 \label{kk2}
  \tex{
  -\n \cdot (|f|^n \n \D f)- \frac{1-\a n}4\, y\cdot \n f +...=0
  \LongA
  f(y) \sim |y|^{\frac 4n} \asA y \to \iy.
  }
 \ee
 Note that, as $n \to 0^+$, precisely this behaviour leads to an
 exponentially growing bundle, which is prohibited in Theorem
 \ref{Th s2} by specifying the proper weighted space
 $L^2_{\rho^*}(\ren)$ and eventually leading to the polynomial
 eigenfunctions \ef{s16}.

 The second asymptotics is linear: as $y \to \iy$,
  \be
  \label{kk3}
  \tex{
  - \frac{1-\a n}4 \, y \cdot \n f - \a f+...=0
  \LongA f(y) \sim |y|^\g \whereA \g= - \frac {4\a}{1-\a n}>0,
   }
   \ee
   since by \ef{kk1} we have to assume that $\a_k(n)<0$ (the first eigenvalue $\a_0(n)=0$
   is not of particular interest; see below) and always  $\a_k(n) <
   \frac 1n$. Note that then
   \be
   \label{kk4}
   \tex{
   \g \equiv  \frac {4|\a|}{1+|\a| n} < \frac 4n,
    }
    \ee
 so that the linear behaviour \ef{kk3} is the actual {\em minimal} one in
 comparison with  \ef{kk2}.

Overall, this allows to formulate such a ``radiation-like"
condition at infinity, which now takes a clear ``minimal nature":
 \be
 \label{kk5}
  \fbox{$
 \mbox{find solutions $f(y)$ of (\ref{sf4}) bounded at infinity by functions as in
 (\ref{kk3}).}
 $}
 \ee

In self-similar approaches and ODE theory, such conditions are
known to define similarity solutions of the {\em second kind}, a
term, which was introduced by Ya.B.~Zel'dovich in 1956
 \cite{Zel56}. Many of such ODE problems (but indeed, easier) have been rigorously
 solved since then.
  For quasilinear elliptic equations such as
 \ef{sf4}, the condition \ef{kk5} is more subtle and delicate indeed.
We cannot somehow rigorously justify that the problem \ef{sf4},
\ef{kk5} is well posed and admits a countable family of solutions
and nonlinear eigenvalues $\{\a_\b(n)\}$. We recall that using the
homotopy deformation as $n \to 0^+$ was our original intention
{\em in order to avoid} such a difficult ``direct" mathematical
attack of this nonlinear blow-up eigenvalue problem.

\ssk

We begin our actual study by noting that the first nonlinear
pair for \ef{sf4}, \ef{kk5} is trivial: for any $n>0$,
 \be
 \label{kk6}
 \a_0(n)=0 \andA f^-_0(y) \equiv 1,
  \ee
so that this well corresponds to the first Hermite polynomial from
\ef{s16} with $|\b|=0$, where $\psi_0^*(y) \equiv 1$. However,
similarity solutions \ef{u-} with the first eigenfunction in
\ef{kk6} are trivial and do not change sign, so, to understand
formation of nonlinear ``multiple zeros", we will study branching
of eigenfunctions $f_k^-(y)$ for $k \ge 1$.

\subsection{Technical bifurcation calculus}

Thus, the critical values $\a_k(n)$ are obtained for small $n>0$
according to spectral theory established in Section \ref{S3}. As
was noticed, the explicit expression for the eigenvalues and
eigenfunctions for the operator ${\bf B}^*$ in \ef{ad55}
are  known; see Theorem \ref{Th s2}. Moreover, supposing the
corresponding linear counterpart from \eqref{s16} with $n=0$, we
find, at least formally, that
\begin{equation}
\label{lin}
 \tex{
    \mathcal{L}(\a)f:=-\D^2 f -\frac{1}{4}\, y \cdot\nabla f  -\a f=0.
     }
\end{equation}
This equation can be considered as a linear perturbation in terms of the parameter $\a$
 of that for the adjoint operator $\BB^*$ in \eqref{ad55}.
From that equation combined with the eigenvalue expressions
obtained for the operator ${\bf B}^*$, we derive the  critical
values for the parameter $\a$ given in \ef{kk1},
where $\l_k$ are the eigenvalues  defined in Theorem\,\ref{Th s2}.
Note that those eigenvalues coincide with the eigenvalues of the
operator ${\bf B}$. In particular, when $k=0$, we have that $\a_0=
0= \l_0$ and the eigenfunction is $\psi_0^*=1$, satisfying
\begin{equation*}
    {\bf B}^* \psi_0^*=0, \quad \mbox{so that} \quad
    \ker \mathcal{L}(\a_0) = \mathrm{span\,}\{\psi_0^*=1\}.
\end{equation*}
 Hence,  $\l_0=0$ is a simple eigenvalue for the operator
$\mathcal{L}(\a_0)= {\bf B}^*$ and its algebraic multiplicity is
1. In general, we find that (note that $k=0$ is trivial)
\begin{equation}
\label{adjo}
 \tex{
    \ker\big({\bf B}^* + \frac{k}{4}\, I \big)= \mathrm{span\,}\{\psi_\b, \, |\b|=k
    \}, \quad \hbox{for any}
    \quad k=0,1,2,3,\dots\,,
    }
\end{equation}
where the operator ${\bf B}^* + \frac{k}{4}\, I$ is Fredholm of
index zero. In other words,
$R[\mathcal{L}(\a_k)]$ is a closed subspace of $L_{\rho}^2(\ren)$
and
\begin{equation*}
    \hbox{dim} \ker(\mathcal{L}(\a_k))< \infty, \quad \hbox{codim}\,R[\mathcal{L}(\a_k)]< \infty
\end{equation*}
for each $\a_k$. Moreover, $\dim \ker\big({\bf B} + \frac{k}{4}\,
I\big)=  M_k^*  \ge 1$,  for any $k=0,1,2,3,\dots$.

Then, once the relation between \eqref{lin} and the
linear operator ${\bf B}^*$ has been established,  for which we know its spectral
theory, by regularity issues in Section \ref{S1.4}, we can assume
for small $n>0$ in \eqref{s16} the following expansions:
\begin{equation}
\label{br3}
    \a_k(n):= \a_k+ \mu_{1,k} n+ o(n),\quad
    |f|^n \equiv |f|^n= {\mathrm e}^{n\ln |f|}:=
     1 +n \ln |f|+o(n),
\end{equation}
 where the last one is assumed to be understood in a weak sense.
Again, it is convenient to discuss further the last one. Indeed, the
second expansion cannot be interpreted pointwise for oscillatory
changing sign solutions $f(y)$, though now these functions are
assumed to have {\em finite} number of zero surfaces (as the
generalized Hermite polynomials for $n=0$ do). However, as usual,
this, of course, imposes some restrictions on the possible zeros
of the eigenfunctions $\psi_\b^*(y)$. According to the spectral
theory in Section\;\ref{S3} we already know that those
eigenfunctions and their linear combinations for the adjoint
operator ${\bf B}^*$ are generalized Hermite polynomials given by
\eqref{s16}. Hence, they are analytic functions with isolated
zeros.

Since the possible zeros are isolated, they can be localized in
arbitrarily  small neighbourhoods. Indeed, it is clear that when
$|f|>\d>0$ for any $\d>0$, there is no problem in approximating of
$|f|^n$ as in \eqref{br3}, i.e., $|f|^n = O(n)$ as $n \rightarrow
0^+$. However, when $|f| \leq \d $ for  $\d \geq 0$ sufficiently
small, the proof of such an approximation is far from clear unless
the zeros of the $f$'s are all transversal in a natural sense.
In view of the expected finite oscillatory nature of solutions
$f^-(y)$, this should allow one to obtain a weak convergence as in
\ef{nn2} to be used in the integral equation similar to \ef{int1}
(with $\BB$ replaced by $\BB^*$).

However, let us stress again that, in the present ``blow-up"
case, we do not need such subtle oscillatory properties of
solutions close to interfaces, which are not known in complicated
geometries. The point is that, due to the condition \ef{kk5}, we
are looking for solutions $f(y)$ exhibiting finite oscillatory and
sign changing properties, which are similar to those for linear
combinations of Hermite polynomials \ef{s16}.
 Hence, we can suppose that
their zeros (zero surfaces) are transversal a.e.,
so we find that, for  $n>0$  and any $\d= \d(n)
>0$ sufficiently small,
\begin{equation*}
    n| \ln |f| | \gg 1, \quad \hbox{if} \quad |f| \leq \d(n),
\end{equation*}
and, hence, on such subsets, $f(y)$ must be exponentially small:
\begin{equation*}
 \tex{
    | \ln |f| | \gg \frac{1}{n}\; \Longrightarrow \;\ln |f| \ll -\frac{1}{n}\;
    \Longrightarrow \; |f|  \ll {\mathrm e}^{-\frac{1}{n}}.
    }
\end{equation*}
Thus, we can control the singular coefficients in \eqref{br3}, and, in particular, see that
\begin{equation}
 \label{f1loc}
    \ln |f| \in L^1_{\rm loc} (\ren).
\end{equation}
Recall that this happens also in exponentially small
neighbourhoods of the transversal zeros.

It is worth recalling again that our computations below are to be
understood as those dealing with the equivalent integral equation
similar to \ef{int1} and operators, so, in particular, we can use
the powerful facts on compactness of the resolvent $(\BB-\l
I)^{-1}$ and the adjoint one $(\BB^*-\l I)^{-1}$ in  the
corresponding weighted $L^2$-spaces.

Note that, in such an
equivalent integral representation, the singular term in
(\ref{br3}) satisfying (\ref{f1loc}) makes no principal
difficulty, so the last expansion in (\ref{br3}) makes rather
usual sense for applying standard nonlinear operator theory.
Overall, the above analysis somehow justifies further branching
study. We must admit that this is not a rigorous one, but is
indeed sufficient for our formal expansions as $n \to 0^+$.

Thus, substituting \eqref{br3} into the nonlinear eigenvalue
problem \eqref{sf4} and omitting $o(n)$ terms when necessary, we
obtain the following expression:
\begin{equation*}
 \tex{
    -\nabla \cdot[(1 +n \ln |f|) \nabla \D f]
    -\frac{1-\a_k n - \mu_{1,k} n^2}{4}\, y \cdot\nabla  f
    -(\a_k+ \mu_{1,k} n) f=0\,,
    }
\end{equation*}
for any $k=0,1,2,3,\dots$\,. Hence, rearranging terms yields
\begin{equation*}
 \tex{
    -\D^2 f -n\nabla \cdot( \ln |f| \nabla \D f)
    -\frac{1}{4}\, y \cdot\nabla  f
    +\frac{\a_k n + \mu_{1,k} n^2}{4}\, y \cdot\nabla  f
    -\a_k f- \mu_{1,k} n f=0\,.
    }
\end{equation*}
In addition, using the expression of the operator ${\bf B}^*$ yields
\begin{equation}
\label{br5}
 \tex{
    \big({\bf B}^* + \frac{k}{4}\,I \big) f +n \mathcal{N}_k (f)+o(n)   =0\,,
     }
\end{equation}
with the operator
\begin{equation}
\label{br6}
 \tex{
    \mathcal{N}_k (f):=-\nabla \cdot( \ln |f| \nabla \D f)
    +\frac{\a_k}{4}\, y \cdot\nabla  f
    - \mu_{1,k} f\,.
    }
\end{equation}
Subsequently, we shall compute the coefficients involved in the
expansions \eqref{br3} applying the classical Lyapunov--Schmidt
method to \eqref{br5}  (branching approach when $n\downarrow 0$),
and, hence, describing the behaviour of the blow-up solutions for
at least small values of the parameter $n>0$. Two cases are
distinguished. The first one in which the eigenvalue is simple and
the second for which the eigenvalues are semisimple. Note that due
to Theorems\;\ref{Th s1} and \ref{Th s2}, for any $k\geq 0$, the
algebraic multiplicities are equal to the geometric ones, so we do
not deal with the problem of introducing the generalized
eigenfunctions (no Jordan blocks are necessary for restrictions to
eigenspaces).

\subsection{Simple eigenvalue}

Recall that this always happens for $k=0$ (not interesting) and
also in 1D and radial geometry, when all the eigenvalues of such
ordinary differential operators are simple.

As a typical example,
we perform the analysis as for $k=0$, bearing in mind the above
other more interesting applications.

Thus,
 since the first eigenvalue $\l_0=0$ of ${\bf B}^*$ is
simple, the dimension  of the eigenspace is $M_0^*=1$, the
analysis of this particular case presents less difficulties than
the corresponding ones for any other $k\geq 1$. Hence, denoting
$\ker\,{\bf B}^*= \mathrm{span\,}\{\psi_0^*=1\}$ and by $Y_0^*$
the complementary invariant subspace, orthogonal to $\psi_0$, we
set
\begin{equation}
\label{fad}
    f=\psi_0^*+V_0^*,
\end{equation}
where $V_0^* \in Y_0^*$. We define $P_0^*$ and $P_1^*$ such that
$P_0^*+P_1^*=I$, to be the projections onto $\ker\,{\bf B}^*$ and
$Y_0^*$ respectively. We next set
\begin{equation}
\label{br8}
 V_0^*:=n \Phi_{1,0}^* + o(n).
\end{equation}
Then, after substituting \eqref{fad} into \eqref{br5} and passing
to the limit as $n\rightarrow 0^+$, we arrive at a linear
inhomogeneous equation for $\Phi_{1,0}^*$
\begin{equation}
\label{br9}
    {\bf B}^*\Phi_{1,0}^*=- \mathcal{N}_0 (\psi_0^*),
\end{equation}
since ${\bf B}^* \psi_0^*=0$. By  Fredholm's theory \cite{Deim}
(spectral theory of the pair $\{\BB,\BB^*\}$ from Section \ref{S3}
does also matter), a unique solution $V_0^* \in Y_0^*$ of \ef{br9}
exists if and only if the right-hand side is orthogonal to the one
dimensional kernel of the adjoint operator, in this case ${\bf
B}$. In other words, in the topology of the dual space $L^2$, the
following holds:
\begin{equation}
\label{br10}
    \big\langle \mathcal{N}_0 (\psi_0^*), \psi_0 \big\rangle=0.
\end{equation}
Therefore, \eqref{br9} has a unique solution $\Phi_{1,0}^* \in
Y_0^*$ determining by \eqref{br8} a bifurcation branch for small
$n>0$. In addition, we obtain the following explicit expression
for the coefficient $\mu_{1,0}$ of the corresponding nonlinear
eigenvalue $\a_0(n)$ denoted by \eqref{br3}:
\begin{equation*}
    \mu_{1,0}
    :=
 \tex{
    \frac{\langle \nabla \cdot( \ln |\psi_0^*| \nabla \D \psi_0^*),\psi_0\rangle}
    {\langle \psi_0^*,\psi_0\rangle}
 }
     =
 \tex{
    \langle \nabla \cdot( \ln |\psi_0^*| \nabla \D \psi_0^*),\psi_0\rangle.
 }
\end{equation*}

\subsection{Multiple eigenvalues for  ${\bf k \ge 1}$}
 \label{S5.5}

For any
$k\geq 1$, we  know that
\begin{equation*}
 \tex{
    \dim \ker\big({\bf B}^* + \frac{k}{4} \,I\big)=  M_k^* >1
     \quad(\mbox{actually}, \,\,\,M_k^*=M_k).
    }
\end{equation*}
  Hence, we have to take the representation
\begin{equation}
\label{br12}
 \tex{
    f=\sum_{|\b|=k} c_\b \hat{\psi}_\b^* +V_k^*,
 }
\end{equation}
for every $k\geq 1$. Currently, for convenience, we denote
$\{\hat{\psi}_\b^*\}_{|\b|=k}=\{\hat \psi_1^*,...,\hat
\psi_{M_k}^*\}$, the natural basis of the $M_k^*$-dimensional
eigenspace $\ker\big({\bf B}^* + \frac{k}{4}\, I\big)$ and set
 $\psi_k^* =
\sum_{|\b|=k} c_\b \hat{\psi}_\b^*$.  Moreover, $V_k^* \in Y_k^*$
and $V_k^*=\sum_{|\b|>k} c_\b {\psi}_\b^*$, where $Y_k^*$ is the
complementary invariant subspace of $\ker\big({\bf B}^* +
\frac{k}{4}\, I\big)$. Furthermore, in the same way as we did for
the case $k=0$, we define the $P_{0,k}^*$ and $P_{1,k}^*$, for
every $k\geq 1$, to be the projections of $\ker\big({\bf B}^* +
\frac{k}{4}\, I\big)$ and $Y_k^*$ respectively. We also denote
$V_k^*$ by
\begin{equation}
\label{br13}
    V_k^*:=n \Phi_{1,k}^* + o(n).
\end{equation}
Subsequently, substituting \eqref{br12} into \eqref{br5} and
passing to the limit as $n\downarrow 0^+$, we obtain the following
equation:
\begin{equation}
\label{br14}
 \tex{
    \big({\bf B}^*+ \frac{k}{4}\,I\big)\Phi_{1,k}=- \mc{N}_k \big(\sum_{|\b|=k} c_\b {\psi}_\b^*\big),
    }
\end{equation}
under the natural ``normalizing" constraint
\begin{equation}
\label{nor}
 \tex{
    \sum\limits_{|\b|=k} c_\b=1.
    }
\end{equation}
Therefore, applying the Fredholm alternative \cite{Deim}, a unique
$V_k^* \in Y_k^*$ exists if and only if  the right-hand side of \eqref{br14}
is orthogonal to $\ker\,\big({\bf B}^*+ \frac{k}{4}\,I\big)$.
Multiplying the right-hand side  of \eqref{br14} by $\psi_\b$, for
every $|\b|$, in the topology of the dual space $L^2$, we obtain
an algebraic system of $M_k^*+1$ equations and the same number of
unknowns, $\{c_\b, \, |\b|=k\}$ and $\mu_{1,k}$:
\be
 \label{alg1}
  \tex{
\big\langle \mc{N}_k (\sum_{|\b|=k} c_\b {\psi}_\b^*), \psi_\b
\big\rangle=0 \quad \mbox{for all} \quad  |\b|=k,
 }
 \ee
 which is indeed the Lyapunov--Schmidt branching equation
 \cite{VainbergTr}.
Through that algebraic system we shall ascertain the coefficients
of the expansions \eqref{br3} and, hence, eventually the
directions of branching, as well as the number of branches.
However, a full solution of the {\em non-variational} algebraic
system \eqref{alg1} is a very difficult issue, though we claim
that the number of branches is expected to be related to the
dimension of the eigenspace $\ker\,\big({\bf B}^*+
\frac{k}{4}\,I\big)$.

In order to obtain the number of possible branches and with  the
objective of avoiding excessive notation, we analyze two typical
cases.

\ssk

\noi\underline{\sc Computations for branching of dipole solutions
in 2D}.  Firstly, we ascertain some expressions for those
coefficients in the case when $|\b|=1$, $N=2$ and $M_1^*=2$, so
that, in our notations, $\{\psi_\b\}_{|\b|=1}=\{\hat \psi_1^*,
\hat \psi_2^*\}$ such that $\psi_1^*=c_1 \hat \psi_1^*+c_2 \hat
\psi_2^*$. Consequently, in this case, we obtain the following
algebraic system:
\begin{equation}
\label{br16}
    \left\{\begin{array}{l}
    c_1  \langle \hat \psi_1,h_1 \rangle+ \frac{c_1 \a_1}{4}\,
     \langle \hat \psi_1,y \cdot\nabla  \hat{\psi}_1^* \rangle -c_1 \mu_{1,1}
    +c_2  \langle \hat \psi_1,h_2 \rangle+\frac{c_2 \a_1}{4}\,
     \langle \hat \psi_1,y \cdot\nabla  \hat{\psi}_2^* \rangle = 0,\ssk\\
    c_1  \langle \hat \psi_2,h_1 \rangle +\frac{c_1 \a_1}{4}\,
     \langle \hat \psi_2,y \cdot\nabla  \hat{\psi}_1^* \rangle
    + c_2  \langle \hat \psi_2,h_2 \rangle  + \frac{c_2 \a_1}{4}\,
     \langle \hat \psi_2,y \cdot\nabla  \hat{\psi}_2^* \rangle - c_2 \mu_{1,1}=0,\ssk\\
    c_1+c_2=1,
    \end{array}\right.
\end{equation}
where
\begin{equation*}
   h_1:= -\nabla \cdot  [ \ln (c_1 \hat{\psi}_1^*+c_2\hat{\psi}_2^*) \nabla \D
   \hat{\psi}_1^* ],\,\,
    h_2:= -\nabla \cdot
     [ \ln (c_1 \hat{\psi}_1^*+c_2\hat{\psi}_2^*) \nabla \D \hat{\psi}_2^* ],
\end{equation*}
and, $c_1$, $c_2$ and $\mu_{1,1}$ are the coefficients that we
want to calculate. Also, $\a_1=\l_1$ is regarded as the value of
the parameter $\a$ denoted by \eqref{kk1} such that $\hat
\psi_{1,2}^*$ are the corresponding associated adjoint
eigenfunctions. Now,
 from the third equation we have $c_2=1-c_1$, so that
substituting it into the first two equations of \eqref{bf16} gives
\begin{equation}
\label{br17}
    \left\{\begin{array}{l}
    0=N_1(c_1,\mu_{1,1})+ c_1\frac{\a_1}{4} \,\big[
     \langle \hat \psi_1,y \cdot\nabla  \hat{\psi}_1^* \rangle
    -  \langle \hat \psi_1,y \cdot\nabla  \hat{\psi}_2^* \rangle\big], \ssk\\
    0=N_2(c_1,\mu_{1,1}) + c_1\frac{\a_1}{4}\,\big[
     \langle \hat \psi_2,y \cdot\nabla  \hat{\psi}_1^* \rangle
    - \langle \hat \psi_2,y \cdot\nabla  \hat{\psi}_2^* \rangle\big]- \mu_{1,1},
    \end{array}\right.
\end{equation}
where
\begin{equation*}
  \begin{split} &
   \tex{
   N_1(c_1,\mu_{1,1}):= c_1  \langle \hat \psi_1,h_1 \rangle
   + \langle \hat \psi_1,h_2 \rangle
   + \frac{\a_1}{4}\,
     \langle \hat \psi_1,y \cdot\nabla  \hat{\psi}_2^* \rangle
   -c_1  \langle \hat \psi_1,h_2 \rangle -c_1 \mu_{1,1},
 }
   \\ &
    \tex{
   N_2(c_1,\mu_{1,1}):=c_1  \langle \hat \psi_2,h_1 \rangle
   +  \langle \hat \psi_2,h_2 \rangle
   + \frac{\a_1}{4} \,\langle \hat \psi_2,y \cdot\nabla  \hat{\psi}_2^* \rangle-
   c_1 \langle \hat \psi_2,h_2 \rangle + c_1 \mu_{1,1}
    }
   \end{split}
\end{equation*}
represent the nonlinear parts of the algebraic system, with $h_0$
and $h_1$ just depending on $c_1$ in this case.
\par
To detect  solutions for the system \eqref{br16} we apply
the Brouwer fixed point theorem to
\eqref{br17} (see \cite{PV} for
further details). Then, we suppose that the values $c_1$ and
$\mu_{1,1}$ are the unknowns in a sufficiently big disc
$D_R(\hat{c}_1,\hat{\mu}_{1,1})$, centered in a possible
nondegenerate zero $(\hat{c}_1,\hat{\mu}_{1,1})$. Therefore, if
one of the next two conditions are satisfied
\begin{equation*}
     \langle \hat \psi_1,y \cdot\nabla  \hat{\psi}_1^* \rangle-
     \langle \hat \psi_1,y \cdot\nabla  \hat{\psi}_2^* \rangle\neq 0
     \quad \hbox{or} \quad \langle \hat \psi_2,y \cdot\nabla  \hat{\psi}_2^* \rangle-
     \langle \hat \psi_2,y \cdot\nabla  \hat{\psi}_1^* \rangle\neq 0,
\end{equation*}
the nonlinear algebraic system \eqref{br17} has at least one
non-degenerate solution. Note that multiplicity results are
extremely difficult to obtain. So, to ascertain the number of
solutions for those nonlinear finite-dimensional algebraic
problems is rather complicated. However, we expect, and in fact
compute it in some cases, that this is  somehow related to the
dimension of the corresponding eigenspace $\ker\,\big({\bf B}^*+
\frac{k}{4}\,I\big)$, $k\geq 1$.

Firstly, we calculate the number of solutions for the nonlinear
algebraic system \eqref{br16}.
 Integrating by parts the terms in which $h_1$ and $h_2$ are involved in the first two
  equations and rearranging terms, we arrive at (as usual, all
  integrals are over $\ren$)
\begin{align*}
 \tex{
   \int \nabla \psi_1 \cdot \ln (c_1 \hat{\psi}_1^*+c_2\hat{\psi}_2^*) \nabla \D
   (c_1 \hat{\psi}_1^*+ c_2 \hat{\psi}_2^*)
 }
    &
    \tex{
     + c_1 \frac{\a_1}{4}\,
    \int \hat \psi_1 y \cdot\nabla  \hat{\psi}_1^*-c_1 \mu_{1,1}
    }
    \\ &
 \tex{
    + c_2  \frac{\a_1}{4}\,
    \int \hat\psi_1 y \cdot\nabla  \hat{\psi}_2^* = 0,
    }
\end{align*}
\begin{align*}
 \tex{
   \int \nabla \hat \psi_2\cdot \ln (c_1 \hat{\psi}_1^*+c_2\hat{\psi}_2^*) \nabla \D
   (c_2 \hat{\psi}_1^*+ c_2 \hat{\psi}_2^*)
 }
    &
 \tex{
     +c_1  \frac{\a_1}{4}\,
   \int \hat \psi_2 y \cdot\nabla  \hat{\psi}_1^*
    - c_2 \mu_{1,1}
 }
     \\ &
      \tex{
    +c_2  \frac{\a_1}{4}\,
    \int \hat\psi_2 y \cdot\nabla  \hat{\psi}_2^* =0.
     }
\end{align*}
Then, substituting the third equation with the expression $c_1 =
1- c_2$ and, hence, putting $c_1 \hat{\psi}_1^*+c_2\hat{\psi}_2^*
= \hat{\psi}_1^*+(\hat{\psi}_2^*- \hat{\psi}_1^*)c_2$ into those
two equations obtained above yields
\begin{align*}
 \tex{
   \int \nabla \hat \psi_1 \cdot
   \ln (\hat{\psi}_1^*+
 }
    &
     \tex{
     (\hat{\psi}_2^*-\hat{\psi}_1^*)c_2) \nabla \D
   (\hat{\psi}_1^*+(\hat{\psi}_2^*- \hat{\psi}_1^*)c_2) -\mu_{1,1}
    +c_2 \mu_{1,1}
     }
    \\ &
 \tex{
     + \frac{\a_1}{4}\,
    \int \hat \psi_1 y \cdot\nabla  \hat{\psi}_1^*
    + c_2  \frac{\a_1}{4}\, \int \hat{\psi}_1 y
    \cdot(\nabla  \hat{\psi}_2^*-\nabla \hat{\psi}_1^*)  = 0,
     }
\end{align*}
\begin{equation}
\label{br59}
\begin{split}
 \tex{
   \int \nabla \hat \psi_2\cdot
   \ln (\hat{\psi}_1^*+
   }
    &
     \tex{
     (\hat{\psi}_2^*-\hat{\psi}_1^*)c_2) \nabla \D
   (\hat{\psi}_1^*+(\hat{\psi}_2^*- \hat{\psi}_1^*)c_2)
    - c_2 \mu_{1,1}
 }
     \\ &
 \tex{
     + \frac{\a_1}{4}\,
    \int \hat \psi_2 y \cdot\nabla  \hat{\psi}_1^*
    +c_2  \frac{\a_1}{4}\,
    \int \hat \psi_2 y
    \cdot(\nabla  \hat{\psi}_2^*-\nabla \hat{\psi}_1^*) =0.
    }
\end{split}
\end{equation}
Subsequently, adding both equations, we have that
\begin{align*}
   \mu_{1,1} &  =
 \tex{
     \int  (\nabla \hat \psi_1+ \nabla \hat \psi_2) \cdot
   \ln (\hat{\psi}_1^*+  (\hat{\psi}_2^*-\hat{\psi}_1^*)c_2) \nabla \D
   (\hat{\psi}_1^*+(\hat{\psi}_2^*- \hat{\psi}_1^*)c_2)
   }
    \\ &
 \tex{
     + \frac{\a_1}{4}\,
    \int (\hat{\psi}_1+ \hat{\psi}_2) y
    \cdot \nabla  \hat{\psi}_1^*
    + c_2  \frac{\a_1}{4}\,
    \int (\hat \psi_1+ \hat \psi_2) y
    \cdot(\nabla  \hat{\psi}_2^*-\nabla \hat{\psi}_1^*).
    }
\end{align*}
Substituting it into the second equation of \eqref{br59}, we find
the  following equation with the single unknown $c_2$:
\begin{equation}
\label{br60}
\begin{split}
   &
    \tex{
    -c_2^2 \frac{\a_1}{4}\,
    \int (\hat \psi_1+ \hat \psi_2) y
    \cdot (\nabla  \hat{\psi}_2^*-\nabla \hat{\psi}_1^*) +
    c_2 \frac{\a_1}{4}(\,
    \int \hat\psi_2 y
    \cdot \nabla  \hat{\psi}_2^*-
    \,
    \int (\hat\psi_1+ 2 \hat\psi_2) y
    \cdot \nabla  \hat{\psi}_1^*)
    }
    \\ &
 \tex{
     + \frac{\a_1}{4}\,
    \int \hat\psi_2 y \cdot\nabla  \hat{\psi}_1^*
    +\int \nabla \hat{\psi}_2\cdot
   \ln (\hat{\psi}_1^*+ (\hat{\psi}_2^*-\hat{\psi}_1^*)c_2) \nabla \D
   (\hat{\psi}_1^*+(\hat{\psi}_2^*- \hat{\psi}_1^*)c_2)
   }
   \\ &
 \tex{
    -c_2 \int (\nabla \hat \psi_1 +\nabla \hat\psi_2)\cdot
   \ln (\hat{\psi}_1^*+ (\hat{\psi}_2^*-\hat{\psi}_1^*)c_2) \nabla \D
   (\hat{\psi}_1^*+(\hat{\psi}_2^*- \hat{\psi}_1^*)c_2) =0,
   }
\end{split}
\end{equation}
which can be written in the following way:
\begin{equation}
 \label{FF1}
    c_2^2 A + c_2 B + C + \o(c_2) \equiv \mf{F} (c_2) + \o
    (c_2)=0.
\end{equation}
 Here $\o(c_2)$ can be considered as a perturbation of the quadratic
form $\mf{F} (c_2)$ with the coefficients of such a quadratic form
defined by
\begin{align*}
    &
     \tex{
     A:=
  -\frac{\a_1}{4}\,
    \int (\hat\psi_1+ \hat\psi_2) y
    \cdot (\nabla  \hat{\psi}_2^*-\nabla \hat{\psi}_1^*),
 }
    \\ &
    \tex{
  B:= \frac{\a_1}{4}(\,
    \int  \hat\psi_2 y
    \cdot \nabla  \nabla \hat{\psi}_2^* -
    \,
    \int (\hat\psi_1+ 2 \hat\psi_2) y
    \cdot \nabla  \hat{\psi}_1^*),
    }
     \\ &
  C:=
 \tex{
   \frac{\a_1}{4}\,
    \int \hat\psi_2 y \cdot\nabla  \hat{\psi}_1^*,
    }
     \\ &
  \o(c_2):=
  \tex{
   \int \nabla \hat\psi_2 \cdot
   \ln (\hat{\psi}_1^*+ (\hat{\psi}_2^*-\hat{\psi}_1^*)c_2) \nabla \D
   (\hat{\psi}_1^*+(\hat{\psi}_2^*- \hat{\psi}_1^*)c_2)
   }
   \\ &
    \tex{
    -c_2 \int (\nabla \hat\psi_1+\nabla \hat\psi_2)\cdot
   \ln (\hat{\psi}_1^*+ (\hat{\psi}_2^*-\hat{\psi}_1^*)c_2) \nabla \D
   (\hat{\psi}_1^*+(\hat{\psi}_2^*- \hat{\psi}_1^*)c_2).
   }
\end{align*}
Hence, due to the normalizing constraint \eqref{nor}, $c_2 \in
[0,1]$, solving the quadratic equation $ \mf{F} (c_2)=0$ yields:
\begin{enumerate}
\item[(i)] $c_2=0 \Longrightarrow \mf{F}(0) = C $;
\item[(ii)] $c_2 =1 \Longrightarrow \mf{F}(1)= A+B+C $; and
\item[(iii)] differentiating $\mf{F}$ with respect to $c_2$, we obtain that
$\mf{F}'(c_2) = 2 c_2 A+B$. Then, the critical point of the
function $\mf{F}$ is $c_2^* = -\frac{B}{2A}$ and its image is $
\mf{F}(c_2^*)=- \frac{B}{4A}+C$.
\end{enumerate}

Therefore, since we know about the existence of at least one
solution, different from zero, in this particular case we impose
some conditions in order to have at most two solutions:
\begin{enumerate}
\item[(a)] $C  (A+B+C) >0$;
\item[(b)] $C \big(-\frac{B}{4A}+C\big)<0 $; and
\item[(c)] $0<-\frac{B}{2A}<1$.
\end{enumerate}
Note that, for $-\frac{B}{4A}+C= 0$, we have just a single
solution.
\par
Consequently, going back again to the equation \eqref{FF1}, we
need to control somehow the perturbation of the quadratic form to
maintain the number of solutions. Therefore, controlling the
possible oscillations of the perturbation $\o (c_2)$ in such a way
that
\begin{equation*}
    \left\| \o (c_2) \right\|_{L^\infty} \leq \mf{F}(c_2^*),
\end{equation*}
we can assure that the number of solutions for \eqref{br16} is
exactly two (or at most two). This is actually the dimension of
the kernel for the operator ${\bf B}+\frac{1}{4}\, I$ as we
conjectured. Note  that, in general and for large $k \gg 1$, to
solve such multiplicity problems for those types of
non-variational equations is a rather difficult open problem.

\ssk

\noi\underline{\sc Branching computations for $|\b|=2$}.
Subsequently,  we shall extend those results for the case in which
the dimension of the eigenspace is greater than 1. Again the
calculus are rather tedious. For that reason we find it easier to
make such calculations for the particular case when $|\b|=2$ and
$M_2^*=3$ ($N=2$), so that $\{\psi_\b^*\}_{|\b|=2}=\{\hat
\psi_1^*, \hat \psi_2^*, \hat \psi_3^*\}$ stands for a basis of
the eigenspace $\ker\big({\bf B}^* + \frac{1}{2}\, I \big)$, with
$k=2$ and $\l_k=- \frac k4$ as the associated eigenvalue. Observe
that $\a_k(0) =\l_k$.

Thus, in this case, performing in a similar way as was done for
\eqref{br16} with $\psi_2^*= c_1 \hat \psi_1^*+c_2 \hat \psi_2^*+c_3
\hat \psi_3^*$, we arrive at the following algebraic system:
\begin{equation}
\label{br61}
    \left\{\begin{array}{l}
    \begin{array}{r}
     c_1  \langle \hat \psi_1,h_1 \rangle
    +c_2  \langle \hat \psi_1,h_2 \rangle
    +c_3  \langle \hat \psi_1,h_3 \rangle + \frac{c_1 \a_2}{4}\,
     \langle \hat \psi_1,y \cdot\nabla  \hat{\psi}_1^* \rangle
    + \frac{c_2 \a_2}{4}\,
     \langle \hat \psi_1,y \cdot\nabla  \hat{\psi}_2^* \rangle
    \\+ \frac{c_3 \a_2}{4}\,
     \langle \hat \psi_1,y \cdot\nabla  \hat{\psi}_3^* \rangle -c_1 \mu_{1,2}= 0,
    \ssk \\
     c_1  \langle \hat \psi_2,h_1 \rangle
    + c_2  \langle \hat \psi_2,h_2 \rangle
    + c_2  \langle \hat \psi_2,h_3 \rangle  + \frac{c_1
    \a_2}{4}\,
     \langle \hat \psi_2,y \cdot\nabla  \hat{\psi}_1^* \rangle
    + \frac{c_2 \a_2}{4}\,
     \langle \hat \psi_2,y \cdot\nabla  \hat{\psi}_2^* \rangle
     \\  + \frac{c_3 \a_2}{4}\,
     \langle \hat \psi_2,y \cdot\nabla  \hat{\psi}_3^* \rangle - c_2 \mu_{1,2}=0,
     \ssk \\
     c_1  \langle \hat \psi_3,h_1 \rangle
    + c_2  \langle \hat \psi_3,h_2 \rangle
    + c_2  \langle \hat \psi_3,h_3 \rangle + \frac{c_1
    \a_2}{4}\,
     \langle \hat \psi_3,y \cdot\nabla  \hat{\psi}_1^* \rangle
    + \frac{c_2 \a_2}{4}\,
     \langle \hat \psi_3,y \cdot\nabla  \hat{\psi}_2^* \rangle
     \\  + \frac{c_3 \a_2}{4}\,
     \langle \hat \psi_3,y \cdot\nabla  \hat{\psi}_3^* \rangle - c_3 \mu_{1,2}=0,
     \end{array}\\
    c_1+c_2+c_3=1,
    \end{array}\right.
\end{equation}
where
\begin{equation*}
   h_1:= -\nabla \cdot  [ \ln (c_1 \hat{\psi}_1^*+c_2\hat{\psi}_2^*
   +c_3 \hat{\psi}_3^*) \nabla \D
   \hat{\psi}_1^* ],\,\,
    h_2:= -\nabla \cdot
     [ \ln (c_1 \hat{\psi}_1^*+c_2\hat{\psi}_2^*
     +c_3 \hat{\psi}_3^*) \nabla \D \hat{\psi}_2^* ],
\end{equation*}
\begin{equation*}
  \hbox{and}\quad
   h_3:= -\nabla \cdot  [ \ln (c_1 \hat{\psi}_1^*+c_2\hat{\psi}_2^*
   +c_3 \hat{\psi}_3^*) \nabla \D
   \hat{\psi}_3^* ],\,\,
\end{equation*}
and $c_1$, $c_2$, $c_3$, and $\mu_{1,2}$ are
 unknowns.
   Here,
$\hat{\psi}_1,\hat{\psi}_2,\hat{\psi}_3$ represent the
eigenfunctions associated with the eigenvalue $\l_2=\a_2(0)$ and
$\hat{\psi}_1^*,\hat{\psi}_2^*,\hat{\psi}_3^*$ are the
corresponding adjoint eigenfunctions, which  are associated with
the same eigenvalue $\l_2$.

As for the case  $|\b|=1$, the application of the Brouwer fixed
point theorem  and the topological degree provide us with the
existence of a non-degenerate solution for the nonlinear algebraic
system \eqref{br61} under certain conditions.

Furthermore, in the subsequent analysis, we shall show  a possible
way  to ascertain the number of solutions of the nonlinear
algebraic system \eqref{br61}. Obviously, since the dimension of
the eigenspace is bigger than the corresponding one in the case
$|\b|=1$, the difficulty in obtaining multiplicity results increases.

We proceed as in the previous case. Firstly, we integrate by parts
those terms in which  $h_1$, $h_2$, and $h_3$ are involved. After
rearranging terms, this yields
\begin{align*}
 \tex{
   \int \nabla \psi_1 \cdot
   \ln (c_1 \hat{\psi}_1^*+c_2\hat{\psi}_2^*+ c_3 \hat{\psi}_3^*)}
   & \tex{ \nabla \D
   (c_1 \hat{\psi}_1^*+ c_2 \hat{\psi}_2^*+ c_3 \hat{\psi}_3^*)
 }
    \tex{
     + c_1 \frac{\a_2}{4}\,
    \int \hat \psi_1 y \cdot\nabla  \hat{\psi}_1^* - c_1 \mu_{1,2}
    }
    \\ &
 \tex{
    + c_2  \frac{\a_2}{4}\,
    \int \hat\psi_1 y \cdot\nabla  \hat{\psi}_2^*
    + c_3  \frac{\a_2}{4}\,
    \int \hat\psi_1 y \cdot\nabla  \hat{\psi}_3^* =
    0;
    }
\end{align*}
\begin{align*}
 \tex{
   \int \nabla \hat \psi_2\cdot
   \ln (c_1 \hat{\psi}_1^*+c_2\hat{\psi}_2^*+ c_3 \hat{\psi}_3^*)} &
   \tex{ \nabla \D
   (c_2 \hat{\psi}_1^*+ c_2 \hat{\psi}_2^*+ c_3 \hat{\psi}_3^*)
 }
 \tex{
     +c_1  \frac{\a_2}{4}\,
   \int \hat \psi_2 y \cdot\nabla  \hat{\psi}_1^*
    - c_2 \mu_{1,2}
 }
     \\ &
      \tex{
    +c_2  \frac{\a_2}{4}\,
    \int \hat\psi_2 y \cdot\nabla  \hat{\psi}_2^*
    +c_3  \frac{\a_2}{4}\,
   \int \hat \psi_2 y \cdot\nabla  \hat{\psi}_3^*=0;
     }
\end{align*}
\begin{align*}
 \tex{
   \int \nabla \hat \psi_3\cdot
   \ln (c_1 \hat{\psi}_1^*+c_2\hat{\psi}_2^*+ c_3 \hat{\psi}_3^*)} &
   \tex{ \nabla \D
   (c_2 \hat{\psi}_1^*+ c_2 \hat{\psi}_2^*+ c_3 \hat{\psi}_3^*)
 }
 \tex{
     +c_1  \frac{\a_2}{4}\,
   \int \hat \psi_3 y \cdot\nabla  \hat{\psi}_1^*
    - c_3 \mu_{1,2}
 }
     \\ &
      \tex{
    +c_2  \frac{\a_2}{4}\,
    \int \hat\psi_3 y \cdot\nabla  \hat{\psi}_2^*
    +c_3  \frac{\a_2}{4}\,
   \int \hat \psi_3 y \cdot\nabla  \hat{\psi}_3^*=0.
     }
\end{align*}
Next, by the fourth equation in \eqref{br61}, we have that $c_1 =
1-c_2-c_3$. Then, setting
\begin{equation*}
    c_1 \hat{\psi}_1+c_2\hat{\psi}_2+ c_3 \hat{\psi}_3 =
    \hat{\psi}_1+c_2(\hat{\psi}_2-\hat{\psi}_1)+ c_3
    (\hat{\psi}_3-\hat{\psi}_1)
\end{equation*}
and substituting this into  three equations above yields  a
nonlinear algebraic system:
\begin{align*}
 \tex{
   \int \nabla \hat \psi_1 \cdot
   \ln (\hat{\psi}_1^*+
 }
    &
     \tex{
     (\hat{\psi}_2^*-\hat{\psi}_1^*)c_2+(\hat{\psi}_3^*-\hat{\psi}_1^*)c_3) \nabla \D
   (\hat{\psi}_1^*+(\hat{\psi}_2^*- \hat{\psi}_1^*)c_2+
   (\hat{\psi}_3^*- \hat{\psi}_1^*)c_3)
     }
     \\ &
 \tex{
     -\mu_{1,2}
    +c_2 \mu_{1,2}+ c_3 \mu_{1,2} + \frac{\a_2}{4}\,
    \int \hat \psi_1 y \cdot\nabla  \hat{\psi}_1^*
     }
    \\ &
 \tex{
    + \frac{\a_2}{4} \,\int \hat{\psi}_1 y
    \cdot( (\nabla  \hat{\psi}_2^*-\nabla \hat{\psi}_1^*)c_2 +
    (\nabla \hat{\psi}_3^* - \nabla\hat{\psi}_1^*)c_3)  = 0;
     }
\end{align*}
\begin{align*}
 \tex{
   \int \nabla \hat \psi_2 \cdot
   \ln (\hat{\psi}_1^*+
 }
    &
     \tex{
     (\hat{\psi}_2^*-\hat{\psi}_1^*)c_2+(\hat{\psi}_3^*-\hat{\psi}_1^*)c_3) \nabla \D
   (\hat{\psi}_1^*+(\hat{\psi}_2^*- \hat{\psi}_1^*)c_2+
   (\hat{\psi}_3^*- \hat{\psi}_1^*)c_3)
     }
     \\ &
 \tex{
     -c_2 \mu_{1,2}
    + \frac{\a_2}{4}\,
    \int \hat \psi_2 y \cdot\nabla  \hat{\psi}_1^*
     }
    \\ &
 \tex{
    + \frac{\a_2}{4}\, \int \hat{\psi}_2 y
    \cdot( (\nabla  \hat{\psi}_2^*-\nabla \hat{\psi}_1^*)c_2 +
    (\nabla \hat{\psi}_3^* - \nabla\hat{\psi}_1^*)c_3)  = 0;
     }
\end{align*}
\begin{equation}
\begin{split}
\label{br63}
 \tex{
   \int \nabla \hat \psi_3 \cdot
   \ln (\hat{\psi}_1^*+
 }
    &
     \tex{
     (\hat{\psi}_2^*-\hat{\psi}_1^*)c_2+(\hat{\psi}_3^*-\hat{\psi}_1^*)c_3) \nabla \D
   (\hat{\psi}_1^*+(\hat{\psi}_2^*- \hat{\psi}_1^*)c_2+
   (\hat{\psi}_3^*- \hat{\psi}_1^*)c_3)
     }
     \\ &
 \tex{
     -c_3\mu_{1,2}
    + \frac{\a_2}{4}\,
    \int \hat \psi_3 y \cdot\nabla  \hat{\psi}_1^*
     }
    \\ &
 \tex{
    + \frac{\a_2}{4} \,\int \hat{\psi}_3 y
    \cdot( (\nabla  \hat{\psi}_2^*-\nabla \hat{\psi}_1^*)c_2 +
    (\nabla \hat{\psi}_3^* - \nabla\hat{\psi}_1^*)c_3)  = 0.
     }
\end{split}
\end{equation}
Now, adding the first equation of \eqref{br63} to the other two
ones, we have that
\begin{align*}
 \tex{
   \int (\nabla \hat \psi_1+ \nabla \hat{\psi}_2) \cdot
   \ln (\hat{\psi}_1^*+
 }
    &
     \tex{
     (\hat{\psi}_2^*-\hat{\psi}_1^*)c_2+(\hat{\psi}_3^*-\hat{\psi}_1^*)c_3) \nabla \D
   (\hat{\psi}_1^*+(\hat{\psi}_2^*- \hat{\psi}_1^*)c_2+
   (\hat{\psi}_3^*- \hat{\psi}_1^*)c_3)
     }
     \\ &
 \tex{
     -\mu_{1,2}
    + c_3 \mu_{1,2} + \frac{\a_2}{4}\,
    \int (\hat \psi_1+ \hat{\psi}_2) y \cdot\nabla  \hat{\psi}_1^*
     }
    \\ &
 \tex{
    + \frac{\a_2}{4}\, \int (\hat \psi_1+ \hat{\psi}_2) y
    \cdot( (\nabla  \hat{\psi}_2^*-\nabla \hat{\psi}_1^*)c_2 +
    (\nabla \hat{\psi}_3^* - \nabla \hat{\psi}_1^*)c_3)  = 0,
     }
\end{align*}
\begin{align*}
 \tex{
   \int (\nabla \hat \psi_1+ \nabla \hat{\psi}_3) \cdot
   \ln (\hat{\psi}_1^*+
 }
    &
     \tex{
     (\hat{\psi}_2^*-\hat{\psi}_1^*)c_2+(\hat{\psi}_3^*-\hat{\psi}_1^*)c_3) \nabla \D
   (\hat{\psi}_1^*+(\hat{\psi}_2^*- \hat{\psi}_1^*)c_2+
   (\hat{\psi}_3^*- \hat{\psi}_1^*)c_3)
     }
     \\ &
 \tex{
     -\mu_{1,2}
    + c_2 \mu_{1,2} + \frac{\a_2}{4}\,
    \int (\hat \psi_1+ \hat{\psi}_3) y \cdot\nabla  \hat{\psi}_1^*
     }
    \\ &
 \tex{
    + \frac{\a_2}{4}\, \int (\hat \psi_1+ \hat{\psi}_3) y
    \cdot( (\nabla  \hat{\psi}_2^*-\nabla \hat{\psi}_1^*)c_2 +
    (\nabla \hat{\psi}_3^* - \nabla \hat{\psi}_1^*)c_3)  = 0.
     }
\end{align*}
Subsequently, subtracting those equations yields
\begin{align*}
 \tex{
    \mu_{1,2}
 }
 &
 \tex{
     = \frac{1}{c_2-c_3} \,\big[ \int
      (\nabla \hat \psi_2- \nabla \hat{\psi}_3) \cdot
   \ln \Psi^* \nabla \D \Psi^* +\frac{\a_2}{4}\,
    \int (\hat \psi_2- \hat{\psi}_3) y \cdot\nabla  \hat{\psi}_1^*
  }
     \\ &
  \tex{
    +\frac{\a_2}{4}\, \int (\hat \psi_2- \hat{\psi}_3) y
    \cdot( (\nabla  \hat{\psi}_2^*-\nabla \hat{\psi}_1^*)c_2 +
    (\nabla \hat{\psi}_3^* - \nabla \hat{\psi}_1^*)c_3) \big],
  }
\end{align*}
where $\Psi^* = \hat{\psi}_1^*+
(\hat{\psi}_2^*-\hat{\psi}_1^*)c_2+(\hat{\psi}_3^*-\hat{\psi}_1^*)c_3$.
Thus, substituting it into \eqref{br63}, we arrive at the following system, with
$c_2$ and $c_3$ as the unknowns:
\begin{align*}
 \tex{
   c_3 \int (\nabla \hat{\psi}_1-
   \nabla \hat{\psi}_2  }
    &
     \tex{
     +\nabla \hat{\psi}_3) \cdot
   \ln \Psi
     \nabla \D \Psi^* - c_2 \int (\nabla \hat{\psi}_1+
   \nabla \hat{\psi}_2-\nabla \hat{\psi}_3) \cdot
   \ln \Psi \nabla \D \Psi^*} \\ & \tex{ + \int (
   \nabla \hat{\psi}_2-\nabla \hat{\psi}_3) \cdot
   \ln \Psi^* + \frac{\a_2}{4} \int (\hat{\psi}_2-\hat{\psi}_3)
   y \cdot\nabla \hat{\psi}_1^* \,} \\ & \tex{
    +c_2 \frac{\a_2}{4}\,
    \big[\int (\hat{\psi}_2-\hat{\psi}_3) y \cdot\nabla
    (\hat{\psi}_2^* -2 \hat{\psi}_1^*)- \int
    \hat{\psi}_1 y \cdot\nabla  \hat{\psi}_1^*\big]
     }
     \\ &
 \tex{
    + c_3 \frac{\a_2}{4}\, \big[\int (\hat{\psi}_2-\hat{\psi}_3) y \cdot\nabla
    (\hat{\psi}_3^* -2 \hat{\psi}_1^*)- \int
    \hat{\psi}_1 y \cdot\nabla  \hat{\psi}_1^*\big]
     }
     \\ &
 \tex{
    + c_2 c_3  \frac{\a_2}{4} \big[\, \int \hat{\psi}_1 y
    \cdot (\nabla  \hat{\psi}_2^*-\nabla \hat{\psi}_3^*)
    }
     \\
    &
 \tex{
    +\, \int
    (\hat{\psi}_2- \hat{\psi}_3) y
    \cdot (2\nabla \hat{\psi}_1^*-\nabla\hat{\psi}_2^*-\nabla\hat{\psi}_3^*))\big] } \\ &
    \tex{
    + c_3^2  \frac{\a_2}{4}\, \int (\hat{\psi}_1-
    \hat{\psi}_2+\hat{\psi}_3) y
    \cdot( \nabla \hat{\psi}_3^* - \nabla\hat{\psi}_1^*)}\\ &
    \tex{
    - c_2^2  \frac{\a_2}{4}\, \int (\hat{\psi}_1+
    \hat{\psi}_2-\hat{\psi}_3) y
    \cdot(\nabla  \hat{\psi}_2^*-\nabla \hat{\psi}_1^*)     = 0,
     }
\end{align*}
\begin{align*}
 \tex{
   c_3 \int \nabla \hat{\psi}_2 \cdot
   \ln \Psi^*
 }
    &
     \tex{
     \nabla \D \Psi^* - c_2 \int \nabla \hat{\psi}_3 \cdot
   \ln \Psi^* \nabla \D \Psi^* + c_3 \frac{\a_2}{4}\,
    \int \hat{\psi}_2 y \cdot\nabla  \hat{\psi}_1^*
    }
     \\ &
 \tex{
    -c_2 \frac{\a_2}{4}\,
    \int \hat{\psi}_3 y \cdot\nabla  \hat{\psi}_1^*
    + c_3 \frac{\a_2}{4}\, \int \hat{\psi}_2 y
    \cdot( (\nabla  \hat{\psi}_2^*-\nabla \hat{\psi}_1^*)c_2 +
    (\nabla \hat{\psi}_3^* - \nabla\hat{\psi}_1^*)c_3)
     }
     \\ &
 \tex{
    - c_2 \frac{\a_2}{4}\, \int \hat{\psi}_3 y
    \cdot( (\nabla  \hat{\psi}_2^*-\nabla \hat{\psi}_1^*)c_2 +
    (\nabla \hat{\psi}_3^* - \nabla\hat{\psi}_1^*)c_3)  = 0.
     }
\end{align*}
 These can be re-written in the following form:
\begin{equation}
\label{br64}
    \begin{split}
    &
    \mf{F}_1(c_2,c_3) +\o_1 (c_2,c_3) \equiv A_1 c_2^2+ B_1 c_3^2
    + C_1 c_2 +D_1 c_3+ E_1 c_2 c_3 +\o_1 (c_2,c_3)=0,
    \\ &
    \mf{F}_1(c_2,c_3)+\o_2 (c_2,c_3) \equiv A_2 c_2^2+ B_2 c_3^2
    + C_2 c_2 +D_2 c_3+ E_2 c_2 c_3 +\o_2 (c_2,c_3)=0,
    \end{split}
\end{equation}
\begin{align*}
    \tex{
    \hbox{where} \quad \o_1 (c_2,c_3)} & \tex{
   := c_3 \int (\nabla \hat{\psi}_1-
   \nabla \hat{\psi}_2
     +\nabla \hat{\psi}_3) \cdot
   \ln \Psi^*
     \nabla \D \Psi^*}
     \\ & \tex{
      - c_2 \int (\nabla \hat{\psi}_1+
   \nabla \hat{\psi}_2-\nabla \hat{\psi}_3) \cdot
   \ln \Psi^* \nabla \D \Psi^*
   } \\ & \tex{
   + \int (
   \nabla \hat{\psi}_2-\nabla \hat{\psi}_3) \cdot
   \ln \Psi^*- \frac{\a_2}{4} \int (\hat{\psi}_2-\hat{\psi}_3)
   y \cdot\nabla \hat{\psi}_1^*, \,
   }
\end{align*}
\begin{align*}
    \tex{
    \hbox{and} \quad \o_2 (c_2,c_3)} & \tex{
    :=c_3 \int \nabla \hat{\psi}_2 \cdot
   \ln \Psi^* \nabla \D \Psi^* - c_2 \int \nabla \hat{\psi}_3 \cdot
   \ln \Psi^* \nabla \D \Psi^*
   }
\end{align*}
are the perturbations of the quadratic polynomials
 $$
 \mf{F}_i(c_2,c_3) := A_i c_2^2+ B_i c_3^2 + C_i c_2 +D_i c_3+ E_i c_2 c_3,
\quad \hbox{with} \quad i=1,2.
 $$
 The system \ef{br64} can be re-written in a matrix form with two
 quadratic forms involved:
\begin{equation*}
\begin{cases}
    & \tex{ (c_2\,\,\,c_3) P_1 \binom{c_2}{c_3} + Q_1 \binom{c_2}{c_3}+F_1=0,} \\ &
    \tex{ (c_2\,\,\,c_3) P_1 \binom{c_2}{c_3} + Q_1 \binom{c_2}{c_3} +F_2=0,}
\end{cases}
\end{equation*}
where the matrices $P_j$ and $Q_j$ of the quadratic forms with
$j=1,2$ have the corresponding coefficients $A_j$ to $E_j$ as
entries, plus the perturbations of the quadratic forms denoted,
under this notation, by $F_j$, with $j=1,2$.

Then, owing to the conic classification, we are able to solve
\eqref{br64} (without the nonlinear perturbation) and obtain an
estimate  for the number of solutions of the original nonlinear
algebraic system \eqref{br61}.

Hence, according to the conic classification, we will have the
following conditions that will provide us with conic section of
each equation of the system \eqref{br64} (without the nonlinear
perturbation):
\begin{enumerate}
\item[(i)] If $B_j^2 - 4A_jC_j < 0$, the equation represents an {\em ellipse},
unless the conic is {\em degenerate}, for example $c_2^2 + c_3^2 +
a = 0$ for some positive constant $a$. So, if $A_j=B_j$ and
$C_j=0$ the equation represents a {\em circle};
\item[(ii)] If $B_j^2 - 4A_jC_j = 0$, the equation represents a
{\em parabola}; and
\item[(iii)] If $B_j^2 - 4A_jC_j > 0$, the equation represents a {\em hyperbola}.
If we also have $A_j + C_j = 0$ the equation represents a
hyperbola (a rectangular one).
\end{enumerate}

Therefore, taking into account the ``normalizing" constraint, the
zeros of our system will depend on the coefficients we have for
the system, so on the eigenfunctions that generate each eigenspace
$\ker\big({\bf B}^*+\frac{k}{4}\, I\big)$.

Observe that the number of
intersections between two conics oscillates from one to four.
Hence, this will be the possible number of branches that are
obtained for our problem. However, since the dimension of the
eigenspace in this particular case is three, it seems  that, in
this case, we have four branches, so  two of them should coincide,
though this claim remains uncertain.

Moreover, as was done for the previous case when $|\b|=1$, we need
to control the oscillations of the perturbation functions in order
to maintain the number of solutions. Consequently, assuming that
\begin{equation*}
    \left\| \o_i (c_2,c_3) \right\|_{L^\infty} \leq \mf{F}_i(c_2^*,c_3^*),
    \quad \hbox{with}\quad i=1,2,
\end{equation*}
we conclude  that the number of solutions must be between one and
four. This again gives us an idea of the difficulty of more
general multiplicity results.


\section{Final   comments}

\subsection{A first comment: towards evolutionary completeness}

According to \cite{CompG}, evolutionary completeness of the
nonlinear eigenfunction subsets $\Phi^-(n)$ simply means that
those functions describe all possible types of finite time blow-up
asymptotics for solutions of the TFE--4 \ef{tfe} in a
neighbourhood of any point $(x_0,t_0)$. For nonlinear evolution
equations, such a completeness is a very difficult question. As
far as we know, the evolution completeness result proved in
\cite{CompG} for the 1D porous medium equation on a bounded
interval remains the only known such result for essentially
quasilinear PDEs (i.e., not a perturbed semilinear equation).

 Indeed, for the TFE--4 \ef{tfe}, such a completeness problem is difficult
 beyond any imagination. In particular, this would include a full
 analysis of  all the asymptotics of the non-stationary
 quasilinear fourth-order degenerate parabolic flow \ef{BlowT} containing no monotone,
 coercive, potential, or order-preserving operators.

However, our homotopy approach somehow implies certain (but not
that strong and promising) confidence concerning the evolutionary
completeness of $\Phi^-(n)$ for $n>0$: the point is that, for
$n=0$, the eigenfunction set of the Hermite polynomials \ef{s16}
is indeed {\em complete} and {\em closed} in any suitable weighted
space, where those notions are now understood as in classic theory
of bi-orthogonal polynomials and Riesz bases. So we may expect
that
the evolution completeness for small $n>0$ can be ``inherited"
from those brilliant spectral properties available for $n=0$
(Section \ref{S3}). This is the only issue we are aware of and can
rely on in this analysis.

Same speculations apply to the evolutionary completeness of global
similarity patterns $\Phi^+(n)$ for small $n>0$, which is now
connected with completeness/closure of eigenfunctions \ef{s14} of
$\BB$ for $n=0$; see \cite{EGKP} for  proofs.

 \subsection{A pessimistic comment}

  Overall, we must admit that, though we have obtained some
  multiplicity results for not-that-multi-dimensional eigenspaces
and have shown certain extensions of our techniques, any further
rigorous justification seems to be too excessive. Indeed, any
rigorous results will inevitably require to specify or evaluate
with sufficient accuracy of those numerical values of various
projections given by linear functionals as linear combinations of
 the Hermite
 polynomials \ef{s16}. In view of a complicated nature
of  non-self-adjoint  theory for the spectral pair
$\{\BB,\BB^*\}$, this is expected to be  entirely illusive.

On the other hand, it would be very important to trace our
$n$-bifurcation branches of nonlinear eigenvalue problems in both
global and blow-up setting by using some more general and powerful
techniques of nonlinear operator theory. However, no one can
expect this to be a simply task. We suspect that, in
view of principally non-variational structure of such nonlinear
eigenvalue problems, containing no monotone and/or strongly
coercive operators, any non-local (in $n$) sharp results on
existence/multiplicity of $n$-branches will not be obtained
reasonably soon.

\ssk

Therefore, overall, we claim that our $n$-branching approach,
which allowed us to explain the occurrence of nonlinear branches
from linear eigenfunctions at $n=0$, though not being fully
rigorous, is the only currently available way to detect branching
phenomena for such nonlinear eigenvalue problems embracing similar
classes of non-variational and non-monotone operators. It is clear
how these homotopy-branching methods can be extended to more
general and more higher-order quasilinear operators of different
types, once  a parameter homotopy to a proper linear spectral
problem for a suitable non-self-adjoint pair $\{\BB,\BB^*\}$ has
been well understood and carefully and rigorously studied.
However, we warn that the latter linear problem often can be a
very difficult one itself; one such example of a refined
scattering theory for $2m$-th order linear Schr\"odinger operators
is under attack  in \cite{GalKamLSE}.


\begin{appendix}
\section*{Appendix A. Necessary functional setting for branching at $n=0^+$}
 \label{ApA}
 \setcounter{section}{1}
\setcounter{equation}{0}

\begin{small}

Here, we are going to present some justification of the our main
branching analysis. Namely, we need to deal with expansions such
as \ef{nn1} and/or \ef{nn2}. Recall that, using this, we are not
going to, and in fact cannot, justify rigorously the existence of
nonlinear eigenfunctions as solutions of $\ef{eigpm}_+$, \ef{bc1}
at least for small $n>0$, but just the branching at $n=0$, {\em
under the hypothesis that a proper limit}
 \be
 \label{lim1}
 f(y;n) \to f_0(y) \,\,(=\psi_\b(y)) \asA n \to 0^+
  \ee
  exists in a necessary metric to be specified. According to our spectral theory of the non-self adjoint pair
  $\{\BB,\BB^*\}$, here $f_0$ denote some eigenfunction $\psi_\b$,
  and, in the most simple and interesting case, we assume that
   \be
   \label{lim2}
   f_0(y)=F(y) \,\,(=\psi_0(y)),
    \ee
    where $F(y)$ is the rescaled kernel of the fundamental
    solution \ef{s3} of the bi-harmonic operator.

 Thus, we need to check under which extra assumptions on \ef{lim1}, the following
limit takes place, in the weak sense,
 \be
 \label{l3}
  \tex{
   \frac {|f(y;n)|^n-1}n \rightharpoonup \ln |f_0(y)|,
   }
   \ee
where the right-hand side is assumed to be well defined (bounded)
a.e.
 First of all, it is obvious that such a convergence crucially
 depends on the structure of zeros of the limit functions
 $f_0(y)$, which is easy to demonstrate:

 \ssk

 \noi{\bf Example: a non-transversal zero.} Let $f_0(y)$ have a
 non-transversal zero at, say, $y=0^-$ (the interface point), and
  \be
  \label{l4}
   \tex{
  f(y;n)= {\mathrm e}^{ \frac 1{ny}} \quad \mbox{for
  $y<0$} \LongA  \frac {|f(y;n)|^n-1}n = \frac  {{\mathrm e}^{\frac
  1{y}}-1}n \to \iy.
  }
   \ee
  Actually, this means that
   \be
   \label{l5}
   f(y;n) \to 0 \equiv f_0(y) \asA n \to 0^+ \quad \mbox{for all $y
   <0$},
   \ee
i.e., $y=0$ is not a transversal zero of $f_0(y)$. Then the limit
\ef{l3} makes no sense and the branching analysis at $n=0$ does
not apply at all.

\ssk

Fortunately, such a situation cannot occur for the analytic kernel
$F(y)$ and all its derivatives, representing other eigenfunctions.
Of course, we cannot guarantee that non-transversal zeros of
$F(y)$ cannot occur at all. They can, but with a lower probability
as for any analytic function. However, we do know that such
non-transversal zeros are always isolated and cannot concentrate
on a given surface in $\ren$. Therefore, on any compact subset
such non-transversal zero surfaces {\em have zero measure}.
 However, this is not sufficient and an extra rough estimate
would be useful.

Evidently, \ef{l3} is violated in the pointwise sense on a {\em
bad} set of points $b_*(n)$ such that
 \be
 \label{l51}
 |f(y;n)|\approx |f_0(y)| \ll {\mathrm e}^{-\frac 1n} \quad
 \mbox{for all $n>0$ small}.
  \ee
Then, in this {\em worst} case,
 \be
 \label{l6}
  \tex{
 \frac{|f_0(y)|^n-1}n \sim - \frac 1n \to \iy \asA n \to 0^+.
 }
 \ee
Assume that a non-transversal (a multiple) zero again occur at
$y=0$ and the 1D behaviour is as follows:
 \be
 \label{l7}
 f_0(y) \sim y^k \whereA k=2,3,4,...\, .
  \ee
Then, in the weak sense, the integral representation of \ef{l3}
will provide us with the ``bad" (``worst") discrepancy of the
order
 \be
 \label{l8}
  \tex{
  \sim \frac 1n \, {\rm meas}\, b_*(n) \sim \frac 1n \, {\mathrm
  e}^{-\frac 1{nk}} \to 0 \asA n \to 0^+
  }
  \ee
  for any {\em finite} multiplicity of the zero at $y=0$. It is
  clear that any use of the $\ren$-geometry of such multiple zeros
  cannot affect the non-analytic exponential term in \ef{l8} and the convergence.

We complete our discussion as follows:

\begin{proposition}
 \label{Pr.NN}
 Let \ef{lim1} hold uniformly on compact subset, where the limit
 function $f_0(y)$ satisfy the above assumption of a.a.
 transversal zeros. Then \ef{l3} holds in the weak sense.
 \end{proposition}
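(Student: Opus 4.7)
The plan is to test \eqref{l3} against an arbitrary $\varphi \in C_c(K)$ for a compact $K \subset \ren$ and establish
\begin{equation*}
I_n(\varphi) := \int_K \Big(\tfrac{|f(y;n)|^n-1}{n} - \ln|f_0(y)|\Big)\varphi(y)\,dy \longrightarrow 0 \asA n \to 0^+.
\end{equation*}
The basic tools I would use are the mean-value identity $\tfrac{|f|^n-1}{n}=|f|^{n^*}\ln|f|$ with some $n^*\in(0,n)$, which yields the uniform compact-set bound $|\tfrac{|f|^n-1}{n}|\le C\max(1,|\ln|f(y;n)||)$ for $n$ small, together with the refined expansion $|\tfrac{|f|^n-1}{n}-\ln|f||\le n\ln^2|f|$ on $\{|f|\le 1\}$ (the rigorous content of \eqref{nn1}). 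The argument then proceeds by splitting $K=A_\delta\cup B_\delta$ with $A_\delta=\{|f_0|\ge\delta\}$ and $B_\delta=\{|f_0|<\delta\}$, and treating the two pieces separately.

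On $A_\delta$, uniform convergence $f(\cdot;n)\to f_0$ forces $|f(y;n)|\ge\delta/2$ for $n$ sufficiently small, so $\ln|f(\cdot;n)|\to\ln|f_0|$ uniformly and the Taylor remainder $n\ln^2|f|$ is of size $O(n\ln^2\delta)$. Hence for each fixed $\delta>0$, dominated convergence gives $\int_{A_\delta}(\tfrac{|f|^n-1}{n}-\ln|f_0|)\varphi\to 0$.

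On $B_\delta$, the assumed a.a.\ transversality of the zero set $\Sigma$ of $f_0$ yields, via a standard tubular-neighbourhood/coarea argument, both $|B_\delta|\le C\delta$ and $\int_{B_\delta}|\ln|f_0||\,dy\le C\delta|\ln\delta|\to 0$. I would further split $B_\delta$ at the scale $\varepsilon_n:=e^{-1/\sqrt n}$: on $B_\delta\cap\{|f(y;n)|\ge\varepsilon_n\}$ one has $|\tfrac{|f|^n-1}{n}|\le 1/\sqrt n$, giving a contribution $O(\delta/\sqrt n)$; on the ``deep'' part $B_\delta\cap\{|f(y;n)|<\varepsilon_n\}$ one uses the crude bound $|\tfrac{|f|^n-1}{n}|\le 1/n$ together with the inclusion $\{|f(y;n)|<\varepsilon_n\}\subset\{|f_0|<\varepsilon_n+\eta(n)\}$, where $\eta(n):=\|f(\cdot;n)-f_0\|_{L^\infty(K)}$, and the transversality-based measure estimate $C(\varepsilon_n+\eta(n))$, producing the contribution $C(\varepsilon_n+\eta(n))/n$ that echoes the heuristic \eqref{l8}.

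The main obstacle is precisely this last rate issue: the bare hypothesis provides only $\eta(n)\to 0$ without any quantitative control in terms of $n$, whereas the deep-part bound requires something like $\eta(n)=o(n)$. Closing the argument rigorously thus requires either strengthening the hypothesis to $\eta(n)=o(n)$ (consistent with the branching ansatz $f=f_0+n\Phi_1+o(n)$ invoked in Sections \ref{S4}--\ref{S5}) or an a priori structural statement locating the zeros of $f(\cdot;n)$ inside an $O(\eta(n))$-tube of $\Sigma$, after which the scaling $\delta=\sqrt n$, $\varepsilon_n=e^{-1/\sqrt n}$ makes all three contributions vanish simultaneously and delivers \eqref{l3}.
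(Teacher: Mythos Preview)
Your approach is essentially the same as the paper's: split into a region bounded away from the zero set of $f_0$ and a thin tube around it, use uniform convergence on the former and transversality-based measure estimates on the latter. The paper's argument is in fact more heuristic than yours. It works directly with the scale $e^{-1/n}$ (rather than your $e^{-1/\sqrt n}$), identifies the ``bad'' set as $b_*(n)=\{|f_0|\ll e^{-1/n}\}$, and for a zero of local multiplicity $k$ estimates ${\rm meas}\,b_*(n)\sim e^{-1/(nk)}$, yielding the contribution $\tfrac1n\,e^{-1/(nk)}\to 0$ of \eqref{l8}. Crucially, the paper writes $|f(y;n)|\approx|f_0(y)|$ in \eqref{l51} without justification, i.e.\ it silently identifies the bad sets of $f(\cdot;n)$ and $f_0$ and never addresses the rate issue you isolate.

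The gap you flag --- that uniform convergence alone gives no control on $\eta(n)/n$ --- is therefore real and is present in the paper as well; the paper simply does not confront it, and indeed closes the appendix with ``this is just a rough estimate and further study is needed''. Your proposed remedy (assume $\eta(n)=o(n)$, consistent with the branching ansatz $f=f_0+n\Phi_1+o(n)$ used in Sections~\ref{S4}--\ref{S5}) is exactly the right patch and makes your version strictly more honest than the paper's. With that extra hypothesis your argument closes; without it neither yours nor the paper's does.
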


Finally, let us also formally note that, in \ef{nn5} on the bad
set $b_*(n)$, we have the following:
\be
 \label{nn51}
  \tex{
   \big|(\n \D)^{-1}\big( \frac{|f|^n-1}n \,\n \D f\big)\big| \sim
\big| (\n \D)^{-1}\big( \frac 1 n \,
 \n \D f \big)\big| \sim \big| \frac 1n \, f(y)\big| \ll \frac 1n \,
 {\mathrm e}^{-\frac 1n} \to 0
   }
   \ee
as $n \to 0^+$. This confirms that the convergence \ef{nn5} takes
place a.e., provided that the zero set of $f_0(y)$ has zero
measure only on any compact subset in $\ren$, i.e., the
analyticity is not required (Sard's theorem for $C^p$ functions in
$\ren$ with any $p \ge 1$ may be used). Of course, this is just a
rough estimate and further study is needed.

\end{small}
\end{appendix}

\end{document}